\newcommand{\la}{\langle}
\newcommand{\ra}{\rangle}
\newcommand{\pphi}{|\phi\ra\la\phi|}
\newtheorem{thm}{Theorem}[section]
\newtheorem{lemma}{Lemma}[section]
\newtheorem{prop}[lemma]{Proposition}
\newtheorem*{acklg}{Acknowledgements}
\numberwithin{equation}{section}
\title{Unconditional Uniqueness of the cubic Gross-Pitaevskii Hierarchy with Low Regularity}
\author{ Younghun Hong }
\address{Department of Mathematics \newline\indent The Uxniversity of Texas at Austin}
\email{yhong@math.utexas.edu}
\author{ Kenneth Taliaferro }
\address{Department of Mathematics \newline\indent The University of Texas at Austin}
\email{ktaliaferro@math.utexas.edu}
\author{ Zhihui Xie }
\address{Department of Mathematics \newline\indent The University of Texas at Austin}
\email{zxie@math.utexas.edu}
\date{\today}
\begin{document}
  
\begin{abstract}

In this paper, we establish the unconditional uniqueness of solutions to the cubic Gross-Pitaevskii hierarchy on $\mathbb{R}^d$ in a low regularity Sobolev type space. More precisely, we reduce the regularity $s$ down to the currently known regularity requirement for unconditional uniqueness of solutions to the cubic nonlinear Schr\"odinger equation ($s\ge\frac{d}{6}$ if $d=1,2$ and $s>s_c=\frac{d-2}{2}$ if $d\ge 3$). In such a way, we extend the recent work of Chen-Hainzl-Pavlovi\'c-Seiringer \cite{CHPS}.

\end{abstract}

\maketitle

\section{Introduction}   \label{sec: introduction}

\subsection{Background}

The cubic Gross-Pitaevskii (GP) hierarchy in $\mathbb{R}^d$ is an infinite system of coupled linear equations given by
\begin{equation}\label{GP}
    i\partial_t \gamma^{(k)}=(-\Delta_{\underline{x}_k}+\Delta_{\underline{x}_k'})\gamma^{(k)}+ 
    \lambda B_{k+1}\gamma^{(k+1)},\quad\forall k\in\mathbb{N},
\end{equation}
where $\gamma^{(k)}=\gamma^{(k)}(t, \underline{x}_k,\underline{x}_k'): I\times\mathbb{R}^{dk}\times\mathbb{R}^{dk}\to\mathbb{C}$, $I\subset \mathbb{R}$ is a time interval and $\lambda=\pm1$. Here, we denote $d$-dimensional $k$-spatial variables $(x_1, x_2,..., x_k)$ by $\underline{x}_k$, and the corresponding Laplace operator by $\Delta_{\underline{x}_k}=\sum_{j=1}^k\Delta_{x_j}$, and similarly for the primed variables. For each $k\in\mathbb{N}$, $\gamma^{(k)}$ is a bosonic density matrix on $L_{sym}^2(\mathbb{R}^{dk})$ which is hermitian,
$$
    \gamma^{(k)}(t,\underline{x}_k,\underline{x}_k')=\overline{\gamma^{(k)}
    (t,\underline{x}_k',\underline{x}_k)},
$$ 
and is symmetric in all components of $\underline{x}_k$, and in all components of $\underline{x}_k'$, respectively, $$\gamma^{(k)}(t,x_{\sigma(1)},\cdots,x_{\sigma(k)},x_{\sigma'(1)}',\cdots,x_{\sigma'(k)}')=\gamma^{(k)}(t,\underline{x}_k, \underline{x}_k')$$
for any permutations $\sigma, \sigma'$ on $k$ elements. The equations in \eqref{GP} are coupled by the \emph{contraction operator} $B_{k+1}$,
$$B_{k+1}=\sum_{j=1}^k B_{j;k+1}=\sum_{j=1}^k(B^{+}_{j;k+1}-B^{-}_{j;k+1}),$$
where each $B^{+}_{j;k+1}$ contracts the triple $x_j,x_{k+1},x'_{k+1}$,
\begin{align*}
   \Big(B^{+}_{j;k+1}\gamma^{(k+1)} \Big)(t,\underline{x}_k,\underline{x}'_k)&=
   \int dx_{k+1}dx'_{k+1}\delta(x_j-x_{k+1}) \delta (x_j-x'_{k+1}) \gamma^{(k+1)} (t,\underline{x}_{k+1};    
   \underline{x}'_{k+1})\\ 
    &=\gamma^{(k+1)}(t, \underline{x}_k, x_j, \underline{x}_k', x_j)
\end{align*}
and each $B^{-}_{j;k+1}$ contracts the triple $x'_j,x_{k+1},x'_{k+1}$,
\begin{align*}
    \Big(B^{-}_{j;k+1}\gamma^{(k+1)} \Big)(t,\underline{x}_k,\underline{x}'_k)&= 
    \int dx_{k+1}dx'_{k+1}\delta(x'_j-x_{k+1}) \delta (x'_j-x'_{k+1}) \gamma^{(k+1)} (t,\underline{x}_{k+1};    
    \underline{x}'_{k+1})\\
    &=\gamma^{(k+1)}(t, \underline{x}_k, x_j', \underline{x}_k', x_j').
\end{align*}
The cubic GP hierarchy is called \textit{focusing} (\textit{defocusing}, respectively) if $\lambda=1$ ($\lambda=-1$, respectively).\\

The cubic GP hierarchy is an infinite hierarchy of equations modeling a Bose-Einstein condensate.
For the mathematical study of Bose-Einstein condensation (BEC)
in systems of interacting bosons in the stationary case, we refer to the fundamental works \cite{LS,LSY,LSY2000,LSSY} and the references therein. 
To study the dynamics of Bose-Einstein condensates, one considers
$N$ bosonic particles whose quantum mechanical wave function 
$\psi_{N}\in L^2_{sym}({\mathbb R}^{dN})$
satisfies the $N$-body Schr\"odinger equation
\begin{equation}   \label{LS}
    i\partial_t \psi_{N}=H_N \psi_{N},
\end{equation}
where
$$H_N= \sum_{j=1}^N (-\Delta_{x_j}) +\frac{1}{N}\sum_{1\leq i<j\leq N} V_N(x_i-x_j)$$
and $V_N(x)=N^{d\beta}V(N^\beta x)$ with $\beta\in(0,1)$ (we remark that the case $\beta=1$ is much more difficult to control  \cite{ESY06, ESY07, ESY09, ESY10}). The pair interaction potential $V$ is assumed to be  rotationally symmetric, and to satisfy certain regularity properties. The cubic GP hierarchy is then formally obtained from a limit of the BBGKY hierarchy of marginal density matrices associated to \eqref{LS} as $N\to\infty$. In this limit, $V_N$ converges weakly to  $(\int V(x) dx)\delta$, where $\delta$ denotes the delta distribution. In this sense, the cubic GP hierarchy describes a Bose gas of infinitely many particles with repulsive or attractive two-body delta interactions.

In the special case of factorized initial data $\gamma^{(k)}_0(\underline{x}_k,\underline{x}'_k)=\prod_{j=1}^k \phi_0(x_j)\overline{\phi_0}(x'_j)$ in \eqref{GP}, the state of a Bose-Einstein condensate can be simply described by the cubic nonlinear Schr\"odinger equation (NLS). Indeed, in this case, the cubic GP hierarchy admits a solution
$$\gamma^{(k)}(t,\underline{x}_k,\underline{x}'_k)=\prod_{j=1}^k \phi(t,x_j)\overline{\phi}(t,x'_j),$$
preserving the factorization property as time evolves, if $\phi$ solves the cubic NLS
\begin{equation}   \label{NLS}
    i\partial_t\phi=-\Delta\phi+\lambda|\phi|^2\phi,\ \phi(0)=\phi_0.
\end{equation}
In this way, the cubic NLS is derived as a dynamical mean field limit of the many body quantum dynamics of an interacting Bose gas, provided that given initial data, a solution to the GP hierarchy is unique. We call this formal derivation the BBGKY approach.
In his fundamental works \cite{Lanford1,Lanford2}, Lanford had employed the BBGKY hierarchy to study $N$-body systems in classical mechanics in the limit $N\rightarrow\infty$.  
 
Research efforts aimed at providing a rigorous derivation of nonlinear dispersive equations as mean field limits of  $N$-body Schr\"odinger dynamics have a long and rich history. 
The first results on the derivation of nonlinear Hartree equations (NLH) were due to Hepp \cite{Hepp}, and Ginibre and Velo \cite{GiniVelo1,GiniVelo2}. Their techniques are based on embedding the $N$-body Schr\"odinger equation into the second quantized Fock-space representation. 
In \cite{Spohn} Spohn gives the first derivation of NLH by use of the BBGKY hierarchy. 
More recently, Erd\"os, Schlein and Yau further developed the BBGKY approach, and gave the first derivation of NLS in their celebrated works \cite{ESY06, ESY07, ESY09, ESY10}. In \cite{SchleinRodnianski}, Rodnianski and Schlein proved estimates on the convergence rate of the evolution in the mean field 
limit using the Fock space approach. Their results were extended with second-order corrections in the two-body interaction setting by Grillakis, Machedon and Margetis \cite{GMM2, GMM1}, and three-body interaction setting by X. Chen \cite{Xuwen}.

The  derivation of the cubic NLS in ${\mathbb R}^3$, via the BBGKY approach, due to Erd\"os, Schlein and Yau \cite{ESY06, ESY07, ESY09, ESY10}, comprises the following two main parts:
\begin{itemize}
\item
[(i)] Derivation of the GP hierarchy as the limit of the $N$-body BBGKY hierarchy as $N\to \infty$.
\item
[(ii)] Establishing the uniqueness of solutions to the GP hierarchy. In particular, it is proved that for factorized initial data, the solutions to the GP hierarchy are determined by a cubic NLS.  
\end{itemize} 
In this program, the proof of the uniqueness theorem (part (ii)) is very involved, one of the difficulties being the factorial growth of the number of terms from iterated Duhamel expansions. The authors give a sophisticated combinatorial argument that settled this problem by a clever re-grouping of Feynman graph expansions.\\

Later, in \cite{KM},  Klainerman and Machedon gave a shorter proof of uniqueness of solutions to the 3D cubic GP hierarchy in a different solution space, provided that solutions obey a priori bound,
\begin{equation}   \label{KM bound}
 \int_0^T \| R^{(k)}B_{j;k+1} \gamma^{(k+1)}(t,\cdot,\cdot)\|_{L^2(\mathbb{R}^{dk}\times \mathbb{R}^{dk})} dt < C^k,\quad\forall k\in\mathbb{N},
\end{equation}
where $R_j=(-\Delta_{x_j})^{1/2}$, $R_j'=(-\Delta_{x_j'})^{1/2}$ and $R^{(k)}=\prod_{j=1}^k R_j\prod_{j=1}^k R_j'$. The approach is in part motivated by the authors' previous work on the space-time estimates \cite{KM93}. In \cite{KM}, Klainerman and Machedon gave a concise reformulation of the Erd\"os-Schlein-Yau combinatorial method \cite{ESY06, ESY07, ESY09, ESY10}, and presented it as an elegant board game argument. The uniqueness theorem of \cite{KM} is \textit{conditional} due to the hypothesis \eqref{KM bound}. Since the work \cite{KSS} for the cubic GP hierarchy on two dimensional Euclidean space as well as the 2-dimensional torus, the approach of Klainerman and Machedon was used in various recent works for the derivation of the NLS from interacting Bose gases \cite{CPquintic, CPcubic13, xuwen3dcubic, Chen-Holmer, KSS, ChenKenny, UniqueGP}. The method also inspired the analysis of the Cauchy problem for the GP hierarchy, which was initiated in \cite{CPcauchy} and continued e.g. in \cite{GSS,ChenKenny}.
 \\

We will call the uniqueness of solutions to the GP hierarchy \textit{unconditional} if it holds without assuming any a priori bound of the form \eqref{KM bound}. 
Recently, in \cite{CHPS}, Chen-Hainzl-Pavlovi\'c-Seiringer presented a new, simpler proof of the unconditional uniqueness of solutions to the 3D cubic GP hierarchy, which is equivalent to the uniqueness result of Erd\"os-Schlein-Yau \cite{ESY07}.  The authors employed the quantum de Finetti theorem (Theorem \ref{thm: strong de Finetti} and \ref{de Finetti}) combined with the Erd\"os-Schlein-Yau combinatorial method \cite{ESY06, ESY07, ESY09, ESY10} in board game representation as presented by Klainerman-Machedon in \cite{KM}.

\subsection{Main result} \label{sec: main results}

In this paper, we investigate the unconditional uniqueness of solutions to the cubic GP hierarchy in a low regularity setting.\\ 

To state the main theorem, we first introduce the following definitions. Let $\{\gamma^{(k)}\}_{k\in\mathbb{N}}$ be a sequence of bosonic density matrices on $L_{sym}^2(\mathbb{R}^{dk})$. We say that $\{\gamma^{(k)}\}_{k\in\mathbb{N}}$ is \textit{admissible} if $\mathcal{\gamma}^{(k)}$ is a non-negative trace class operator on $L_{sym}^2(\mathbb{R}^{dk})$ and $\gamma^{(k)}=\textup{Tr}(\gamma^{(k+1)})$ for all $k\in\mathbb{N}$. We call a sequence $\{\gamma^{(k)}\}_{k\in\mathbb{N}}$ a \textit{limiting hierarchy} if there is a sequence $\{\gamma_N^{(N)}\}_{N\in\mathbb{N}}$ of non-negative density matrices on $L_{sym}^2(\mathbb{R}^{dN})$ with $\textup{Tr}(\gamma_N^{(N)})=1$ such that $\gamma^{(k)}$ is the weak-* limit of the $k$-particle marginals of $\gamma_N^{(N)}$ in the trace class on $L_{sym}^2(\mathbb{R}^{dk})$, that is,
$$\gamma_N^{(k)}:=\textup{Tr}_{k+1,\cdot\cdot\cdot, N}(\gamma_N^{(N)})\rightharpoonup^*\gamma^{(k)}\textup{ as }N\to\infty.$$
For $s\in\mathbb{R}$, we define the function space $\mathfrak{H}^s$ by the collection of sequences $\{\gamma^{(k)}\}_{k\in\mathbb{N}}$ of density matrices on $L_{sym}^2(\mathbb{R}^{dk})$ such that
$$\text{Tr}(|S^{(k,s)}\gamma^{(k)}|) < M^{2k}\quad \forall k\in\mathbb{N}\textup{ for some constant }M>0,$$
where
$$S^{(k,s)}:=\prod_{j=1}^k (1-\Delta_{x_j})^{\frac{s}{2}}(1-\Delta_{x'_j})^{\frac{s}{2}}.$$
We say that $\{\gamma^{(k)}(t)\}_{k\in\mathbb{N}}$ is a \textit{mild solution}, in the space $L_{t\in[0,T)}^\infty \mathfrak{H}^s$, to the cubic GP hierarchy with initial data $\{\gamma^{(k)}(0)\}_{k\in\mathbb{N}}$ if it solves the integral equation
$$ \gamma^{(k)}(t)=U^{(k)}(t)\gamma^{(k)}(0)+i\lambda\int_0^t U^{(k)}(t-s)B_{k+1}\gamma^{(k+1)}(s)ds,$$
where $U^{(k)}(t):=e^{it(\Delta_{\underline{x}_k}-\Delta_{\underline{x}'_k})}$, and satisfies the bound 
$$\sup_{t\in[0,T)}\text{Tr}(|S^{(k,s)}\gamma^{(k)}(t)|) < M^{2k}\quad \forall k\in\mathbb{N}\textup{ for some constant }M>0.$$

Our main theorem states that any mild solution to the cubic GP hierarchy, which is either admissible or a limiting hierarchy, is unconditionally unique in $L_{t\in[0,T)}^\infty \mathfrak{H}^s$ for small $s$.
\begin{thm}[Unconditional uniqueness]\label{thm: main theorem}
Let 
\begin{equation}\label{eq:s}
\left\{\begin{aligned}
s\geq&\tfrac{d}{6}&&\textit{ if } d=1,2,\\
s>&s_c&&\textit{ if } d\geq3,
\end{aligned}
\right.
\end{equation}
where $s_c=\tfrac{d-2}{2}$. If $\{\gamma^{(k)}(t)\}_{k\in\mathbb{N}}$ is a mild solution in $L_{t\in[0,T)}^\infty\mathfrak{H}^s$ to the (de)focusing cubic GP hierarchy with initial data $\{\gamma^{(k)}(0)\}_{k\in\mathbb{N}}$, which is either admissible or a limiting hierarchy for each $t$, then it is the only such solution for the given initial data.
\end{thm}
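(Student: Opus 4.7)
I would follow the framework of Chen--Hainzl--Pavlovi\'c--Seiringer \cite{CHPS}, combining iterated Duhamel expansion, the Klainerman--Machedon board game, and the quantum de Finetti theorem; the novelty relative to \cite{CHPS} is that both the dimension and the Sobolev regularity are more general, and the multilinear estimates must be tightened down to the unconditional uniqueness threshold of the cubic NLS. By linearity of \eqref{GP}, it suffices to take the difference $\Gamma^{(k)} := \gamma_1^{(k)} - \gamma_2^{(k)}$ of two mild solutions in $L_{t\in[0,T)}^\infty\mathfrak{H}^s$ sharing the same initial data and show $\Gamma^{(k)}(t)\equiv 0$.

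\textbf{Iterated Duhamel and the board game.} Iterating the integral equation $n$ times yields
\begin{equation*}
\Gamma^{(k)}(t) = (i\lambda)^n \int_{0\le t_n\le\cdots\le t_1\le t} U^{(k)}(t-t_1)B_{k+1}U^{(k+1)}(t_1-t_2)\cdots B_{k+n}\Gamma^{(k+n)}(t_n)\,dt_n\cdots dt_1.
\end{equation*}
Expanding each $B_{k+j}=\sum_{i=1}^{k+j-1}B_{i;k+j}$ produces roughly $(k+n)!/k!$ terms. Following Klainerman--Machedon \cite{KM} and its re-presentation in \cite{CHPS}, a combinatorial ``board game'' regrouping collapses these into at most $C^n$ equivalence classes, each represented by a canonical fully contracted expression. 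Hence, up to trace-norm bounds, controlling $\Gamma^{(k)}(t)$ reduces to estimating $C^n$ such canonical terms.

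\textbf{Reduction to factorized states via de Finetti.} Since for each $t\in[0,T)$ both $\{\gamma_1^{(k)}(t)\}_{k\in\mathbb{N}}$ and $\{\gamma_2^{(k)}(t)\}_{k\in\mathbb{N}}$ are either admissible or limiting hierarchies, the quantum de Finetti theorem (in strong or weak form, as invoked in \cite{CHPS}) furnishes Borel probability measures $\mu_{j,t}$ on the unit ball of $L^2(\mathbb{R}^d)$ with
\begin{equation*}
\gamma_j^{(k)}(t) = \int \pphi^{\otimes k}\, d\mu_{j,t}(\phi),
\end{equation*}
and the $\mathfrak{H}^s$ bound restricts the support of $\mu_{j,t}$ to $H^s$-functions with controlled norm. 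Plugging this factorized representation into each canonical board-game term collapses the fully contracted density matrix expression into a product of NLS-type multilinear integrals in $\phi$, thereby converting the density-matrix multilinear estimate into a space-time multilinear estimate for the cubic nonlinearity $|\phi|^2\phi$ at regularity $s$.

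\textbf{The main obstacle.} The crux is therefore to establish sharp multilinear/bilinear space-time estimates for the canonical terms at regularity $s\geq d/6$ (for $d=1,2$) or $s>s_c$ (for $d\geq 3$), matching precisely the known unconditional uniqueness threshold for the cubic NLS. For $d\geq 3$ with $s>s_c$ this is essentially dictated by critical Strichartz theory, but the thresholds $s=d/6$ in $d=1,2$ are far more delicate: one must adapt refined bilinear/trilinear Strichartz or Fourier-restriction estimates (as used in the NLS unconditional uniqueness literature) to the density-matrix setting, where the factorization supplied by de Finetti is what makes such estimates applicable. Once bounds of the form $C^n t^{\alpha n}$ (with $\alpha>0$) are obtained for each of the $C^n$ canonical terms, summation gives $\|\Gamma^{(k)}(t)\|\lesssim (Ct^{\alpha})^n\to 0$ on a short time interval $[0,T_0)$ with $T_0$ independent of $k$, which forces $\Gamma^{(k)}\equiv 0$ there; a standard iteration in $t$ extends uniqueness to all of $[0,T)$.
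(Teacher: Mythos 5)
Your outline reproduces the paper's architecture exactly — Duhamel iteration, the Klainerman--Machedon board game to reduce $O(n!)$ terms to $O(C^n)$, the quantum de Finetti representation to factorize each canonical term into products of one-particle kernels built from $\phi$, and a final $n\to\infty$ argument on a short time interval. But the proposal stops precisely where the theorem's actual content begins: the step ``establish sharp multilinear space-time estimates for the canonical terms at regularity $s$'' is deferred to ``adapt refined bilinear/trilinear Strichartz or Fourier-restriction estimates,'' and no mechanism is given for overcoming the known obstruction. As explained in the paper's introduction, the obstruction is the last cubic factor $|\phi|^2\phi$ in the distinguished tree: estimating it in $L^2$ gives $\|\phi\|_{L^6}^3$, which costs $s\geq 1$ in $d=3$, far above $s_c=\tfrac12$. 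Your claim that the case $d\geq 3$, $s>s_c$ ``is essentially dictated by critical Strichartz theory'' is therefore misleading — standard Strichartz theory applied to the factorized terms does not reach $s>s_c$, and this is exactly why \cite{CHPS} required higher regularity.

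The missing idea is the paper's Lemma \ref{lemma: trilinear}: one measures the whole hierarchy in the weak norm ${\rm Tr}(|S^{(k,-d)}\cdot|)$ (justified by density), which permits putting an asymmetric pair of norms on the two factors of each one-particle kernel, namely a negative-order Sobolev norm $W^{-(s_c+\frac{\epsilon}{2}),r_\epsilon}$ on the distinguished (cubic-in-$|\phi|^2\phi$) function and $H^{s_\epsilon}$ on the regular ones. The distinguished factor is then handled by the dispersive estimate $\|e^{it\Delta}f\|_{W^{-s,r_\epsilon'}}\lesssim |t|^{-(1-\epsilon)}\|f\|_{W^{-s,r_\epsilon}}$ together with the negative-order fractional Leibniz rule (Lemma \ref{lemma: negLeibniz}), so that the final cubic term is measured in $W^{-(s_c+\frac{\epsilon}{2}),r_\epsilon}$ rather than $L^2$ and costs only $\|\phi\|_{H^{s_\epsilon}}^3$. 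Moreover, because $e^{it\Delta}$ is not an isometry on these $L^p$-based spaces, one must carefully regroup the free propagators acting on the distinguished function before applying the estimates (Case 2 of Lemma \ref{induction step}); this bookkeeping, carried out recursively along the distinguished binary tree, is a genuine technical ingredient your proposal does not anticipate. Without these elements the proof does not close at the claimed regularities, so the proposal as written has a substantive gap at its central step.
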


Our theorem reduces the regularity requirement for unconditional uniqueness for the GP hierarchy in \cite{CHPS}.  We remark that the regularity assumption in \eqref{eq:s} is the same as in the currently known unconditional uniqueness results for the cubic NLS
$$i \partial_t \phi+\Delta \phi-\lambda|\phi|^2\phi=0,\ \phi(0)=\phi_0\in H^s.$$
For NLS, by unconditional uniqueness, we mean uniqueness of solutions in the Sobolev space $H^s$ itself, while uniqueness in the intersection of the Sobolev space and auxiliary spaces is called conditional. By the contraction mapping argument with auxiliary Strichartz spaces, the conditional uniqueness is proved in $H^s$ for $s\geq \max(s_c,0)$, where $s_c=\tfrac{d-2}{2}$ (see \cite{Cazenave}). However, the unconditional uniqueness is proved in $H^s$ only for $s$ in \eqref{eq:s}, and it is an open problem to push $s$ down to zero in one and two dimensions \cite{Kato95,FTBesov03,Roger07,WinTsutsumi08,HanFang}.

Our proof uses the Klainerman-Machedon board game formulation \cite{KM} of the combinatorial argument of Erd\"os-Schlein-Yau \cite{ESY06, ESY07, ESY09, ESY10}, and the method of Chen-Hainzl-Pavlovi\'c-Seiringer \cite{CHPS} via the quantum de Finetti theorem.

The quantum de Finetti theorem is a quantum analogue of the Hewitt-Savage theorem in probability theory. We state its strong and weak versions in the formulation of \cite{LNR}.

\begin{thm}[Strong quantum de Finetti theorem] \label{thm: strong de Finetti}
If a sequence $\{\gamma^{(k)}\}_{k\in\mathbb{N}}$ of bosonic density matrices on $L_{sym}^2(\mathbb{R}^{dk})$ is admissible, then there exists a unique Borel probability measure $\mu$, supported on the unit sphere $S\subset L^2(\mathbb{R}^d)$ and invariant under multiplication of $\phi\in L^2(\mathbb{R}^d)$ by complex numbers of modulus one, such that
\begin{equation}  \label{de finetti cond}
\gamma^{(k)}=\int d\mu(\phi) (|\phi\ra\la\phi|)^{\otimes k}\quad k\in \mathbb{N}.
\end{equation}
\end{thm}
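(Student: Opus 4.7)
The plan is to reduce the problem to a finite-dimensional quantum de Finetti theorem and then pass to the limit, following the localization strategy of Lewin-Nam-Rougerie. Fix an orthonormal basis $\{e_n\}_{n\geq 1}$ of $L^2(\mathbb{R}^d)$ (for instance the Hermite basis) and let $P_K$ denote the orthogonal projection onto $V_K:=\mathrm{span}\{e_1,\ldots,e_K\}$. Define the localized family $\gamma_K^{(k)}:=P_K^{\otimes k}\gamma^{(k)}P_K^{\otimes k}$, which is a bosonic sequence of positive trace class operators supported on the symmetric part of $V_K^{\otimes k}$ and remains admissible (after mild renormalization to account for the partial trace relation).

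First I would invoke the finite-dimensional quantum de Finetti theorem of St\o rmer / Hudson-Moody: for each fixed $K$ there is a unique Borel probability measure $\mu_K$ on the closed unit ball of $V_K$, invariant under the circle action $\phi\mapsto e^{i\theta}\phi$, such that $\gamma_K^{(k)}=\int(|\phi\ra\la\phi|)^{\otimes k}\,d\mu_K(\phi)$ for every $k\in\mathbb{N}$. This reduces the statement to a classical convex-analytic fact about symmetric states on the finite-dimensional $C^*$-algebra $\mathcal{B}(V_K)$, avoiding any infinite-dimensional subtlety at this stage.

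Next I would pass to the limit $K\to\infty$. Regarded as Borel probability measures on the closed unit ball of $L^2(\mathbb{R}^d)$ equipped with the weak topology (which is compact and metrizable), the family $\{\mu_K\}_K$ lives on a common compact space, so Prokhorov's theorem yields a weak-$*$ cluster point $\mu$. Testing the identity $\gamma_K^{(k)}=\int (|\phi\ra\la\phi|)^{\otimes k}\,d\mu_K(\phi)$ against a compact operator on $L^2(\mathbb{R}^{dk})$ and letting $K\to\infty$ produces $\gamma^{(k)}=\int (|\phi\ra\la\phi|)^{\otimes k}\,d\mu(\phi)$ for every $k$.

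The main obstacle is showing that the limiting measure $\mu$ is genuinely supported on the unit sphere and not merely on the closed unit ball, since mass can in principle escape to strictly smaller norms along the weak limit. This is where the full admissibility assumption is essential: the identities
\begin{equation*}
1=\mathrm{Tr}(\gamma^{(k)})=\int\|\phi\|^{2k}\,d\mu(\phi),\qquad k\in\mathbb{N},
\end{equation*}
combined with $\|\phi\|\le 1$ on $\supp\mu$, force $\mu$ to concentrate on $\{\|\phi\|=1\}$ by dominated convergence as $k\to\infty$. Uniqueness of $\mu$ then follows from a Stone-Weierstrass style observation: the polynomials $\phi\mapsto\la\phi^{\otimes k},A_k\phi^{\otimes k}\ra$ with $A_k$ ranging over bounded symmetric operators on $L^2(\mathbb{R}^{dk})$ separate circle-invariant probability measures on the unit sphere of $L^2(\mathbb{R}^d)$, so the family $\{\gamma^{(k)}\}_{k\in\mathbb{N}}$ determines $\mu$ uniquely.
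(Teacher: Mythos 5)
First, a point of comparison: the paper does not prove this theorem at all. It is quoted, in the formulation of Lewin--Nam--Rougerie \cite{LNR}, as a classical result going back to St\o rmer and Hudson--Moody, so there is no in-paper argument to measure your proposal against. Judged on its own terms, your outline follows the Lewin--Nam--Rougerie localization strategy, and several of its steps are sound: the passage to the limit by testing against compact operators, the identity $\mathrm{Tr}(\gamma^{(k)})=\int\|\phi\|^{2k}\,d\mu(\phi)$ (which needs a monotone-convergence argument over finite-rank projections, since the identity operator is not compact, but this is routine), the resulting concentration of $\mu$ on the unit sphere, and the separation argument for uniqueness.

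The genuine gap is in the first step. The localized family $\gamma_K^{(k)}=P_K^{\otimes k}\gamma^{(k)}P_K^{\otimes k}$ is \emph{not} admissible, and no ``mild renormalization'' makes it so: the failure of the consistency relation is structural, because $\mathrm{Tr}_{k+1}\bigl((1^{\otimes k}\otimes P_K)\gamma^{(k+1)}(1^{\otimes k}\otimes P_K)\bigr)$ differs from $\gamma^{(k)}$ as an operator, not merely by a normalization constant. Consequently the St\o rmer/Hudson--Moody finite-dimensional theorem you invoke, which applies to consistent trace-one hierarchies (equivalently, to symmetric states on the infinite tensor product of $\mathcal{B}(V_K)$), does not apply to the truncated family, and the measures $\mu_K$ are not produced. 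Repairing this is the technical heart of the Lewin--Nam--Rougerie argument: one must use geometric localization in Fock space to realize $\{P_K^{\otimes k}\gamma^{(k)}P_K^{\otimes k}\}_k$ as the marginals of a localized Fock-space state, i.e.\ a superposition over particle-number sectors, and then apply a (quantitative) finite-dimensional de Finetti theorem sector by sector with error control. Alternatively, for the \emph{strong} version one can avoid localization entirely: an admissible hierarchy defines a symmetric state on an infinite tensor product $C^*$-algebra, and St\o rmer's theorem decomposes it over product states, whose one-particle components are forced to be pure by bosonic symmetry. Either route must replace the sentence you currently devote to this step.
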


\begin{thm}[Weak quantum de Finetti theorem]\label{de Finetti}
If a sequence $\{\gamma^{(k)}\}_{k\in\mathbb{N}}$ of bosonic density matrices on $L_{sym}^2(\mathbb{R}^{dk})$ is a limiting hierarchy, then there exists a unique Borel probability measure $\mu$, supported on the unit ball $\mathcal{B}\subset L^2(\mathbb{R}^d)$ and invariant under multiplication of $\phi\in L^2(\mathbb{R}^d)$ by complex numbers of modulus one, such that \eqref{de finetti cond} holds.
\end{thm}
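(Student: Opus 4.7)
The plan is to follow the strategy of Chen-Hainzl-Pavlovi\'c-Seiringer \cite{CHPS}, reducing unconditional uniqueness for the GP hierarchy to a multilinear spacetime estimate whose regularity threshold matches that of the unconditional uniqueness theory for the cubic NLS itself. Let $\gamma_1^{(k)}$ and $\gamma_2^{(k)}$ be two mild solutions in $L^\infty_t \mathfrak{H}^s$ with the same initial data, and set $\Gamma^{(k)} := \gamma_1^{(k)} - \gamma_2^{(k)}$. Then $\Gamma^{(k)}$ solves the homogeneous Duhamel equation
\[ \Gamma^{(k)}(t) = i\lambda \int_0^t U^{(k)}(t-s_1) B_{k+1} \Gamma^{(k+1)}(s_1)\, ds_1, \]
and iterating $n$ times expresses $\Gamma^{(k)}(t)$ as a sum of terms, indexed by ordered time-simplices and tuples of contraction indices, each involving $\Gamma^{(k+n)}(s_n)$ preceded by a chain of alternating free propagators and contractions. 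Applying the Klainerman-Machedon board game argument \cite{KM} then regroups these factorially many terms into at most $C^n$ equivalence classes, each represented by an ``upper echelon'' form.

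Next, for each time $s$ and each $i \in \{1,2\}$, the hypothesis that $\gamma_i^{(k)}(s)$ is either admissible or a limiting hierarchy lets me invoke Theorem \ref{thm: strong de Finetti} or Theorem \ref{de Finetti} to write
\[ \gamma_i^{(k)}(s) = \int d\mu_{s,i}(\phi)\, \pphi^{\otimes k}. \]
Substituting this representation into each upper echelon term, the tensorial action of each contraction on a factorized state $\pphi^{\otimes(k+1)}$ produces a multiplication operator $|\phi(x_j)|^2$ acting on $\pphi^{\otimes k}$ (up to the obvious variant on the primed side). Consequently, the $\operatorname{Tr}|S^{(k,s)}\cdot|$ norm of a given iterate factorizes under the de Finetti integrals and reduces to spacetime integrals of NLS-type cubic expressions $|\phi|^2\phi$ propagated by the free Schr\"odinger flow, which is the same structure that appears in the unconditional uniqueness analyses of the scalar cubic NLS.

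The main obstacle is the low-regularity multilinear spacetime estimate: for $s$ as in \eqref{eq:s}, each board-game-reduced $n$-fold iterate must be shown to be bounded in $\operatorname{Tr}|S^{(k,s)}\cdot|$ by $(CT^\alpha)^n M^{2(k+n)}$ for some $\alpha>0$ and some constant $C$ depending only on $s$ and $d$, uniformly in the de Finetti measures whose $\mathfrak{H}^s$-mass is at most $M^{2(k+n)}$. After the de Finetti factorization, this bound is essentially a hierarchy-level version of the estimates used in the NLS unconditional uniqueness theorems \cite{Kato95,FTBesov03,Roger07,WinTsutsumi08,HanFang}, and it is precisely what forces the constraint $s \geq d/6$ in dimensions one and two and $s > s_c$ in higher dimensions. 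Deriving it will be the technical core of the paper, combining Strichartz, Sobolev embedding, and fractional Leibniz tools for $|\phi|^2\phi$ in $H^s$ with the trace-class pairings arising from the hierarchy framework.

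Once this estimate is in place, I choose $T_0 \le T$ small enough that $CT_0^\alpha < 1/2$; then the board game bound, summed over $n$, shows that the $n$-fold iterate tends to zero in $\operatorname{Tr}|S^{(k,s)}\cdot|$ as $n\to\infty$, while the uniform-in-time $\mathfrak{H}^s$ control on $\Gamma^{(k+n)}$ prevents the depth-$n$ Duhamel remainder from growing faster than a fixed geometric factor. Hence $\Gamma^{(k)} \equiv 0$ on $[0,T_0]$, and a standard bootstrap using the time-independence of the $\mathfrak{H}^s$ bound extends the conclusion to the full interval $[0,T)$.
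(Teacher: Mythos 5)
Your proposal does not prove the statement at hand. The statement is the weak quantum de Finetti theorem itself: given that $\{\gamma^{(k)}\}_{k\in\mathbb{N}}$ arises as a weak-* limiting hierarchy of marginals of $N$-body density matrices, one must construct (and show uniqueness of) a Borel probability measure $\mu$ on the unit ball of $L^2(\mathbb{R}^d)$, invariant under the phase action, with $\gamma^{(k)}=\int d\mu(\phi)\,(|\phi\ra\la\phi|)^{\otimes k}$ for every $k$. What you have written instead is an outline of the proof of the main uniqueness theorem for the GP hierarchy (Duhamel iteration, Klainerman--Machedon board game, reduction to multilinear spacetime estimates). Worse, in the middle of the outline you explicitly ``invoke Theorem \ref{thm: strong de Finetti} or Theorem \ref{de Finetti}'' to obtain the representation $\gamma_i^{(k)}(s)=\int d\mu_{s,i}(\phi)\,(|\phi\ra\la\phi|)^{\otimes k}$ --- so as an argument for Theorem \ref{de Finetti} it is circular, and as a whole it establishes a different result that merely uses the theorem as a black box.

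A genuine proof of the weak de Finetti theorem requires tools of a completely different nature from Duhamel expansions and Strichartz-type estimates: one works at fixed time with the sequence $\gamma_N^{(N)}$ and its marginals, and constructs $\mu$ by, e.g., quantitative finite-dimensional de Finetti estimates combined with localization to finite-dimensional subspaces, or coherent-state (Husimi measure) methods in Fock space, together with a weak-* compactness and diagonal extraction argument to pass to the limit $N\to\infty$ and a separate argument for uniqueness of the representing measure. None of these ingredients appear in your proposal. (In the paper this theorem is not proved but quoted from the work of Lewin--Nam--Rougerie \cite{LNR}; if you intend to cite it rather than prove it, you should say so explicitly, and reserve your outline for Theorem \ref{thm: main theorem}, where it is indeed the relevant strategy.)
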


The crucial advantage of using the quantum de Finetti theorem is that it provides a factorized representation of solutions to the GP hierarchy in the integral form (see $(\ref{de finetti form})$). This structure allows us to make use of techniques of NLS theory to analyze solutions to the GP hierarchies (see \cite{CHPS} and \cite{CHPSscat}).\\

As described in Section 6.1.1 of \cite{CHPS}, the main difficulty in lowering regularity is from the last cubic term $\||\phi|^2\phi\|_{L^2}=\|\phi\|_{L^6}^3$ in the distinguished tree. Indeed, this last term can be controlled by the Sobolev norm $\|\phi\|_{H^s}^3$ only for $s\geq 1$ in $\mathbb{R}^3$. We solve this problem by using the dispersive estimate
$$\|e^{it\Delta}f\|_{L^{\frac{6}{1+2\epsilon}}}\lesssim |t|^{-(1-\epsilon)}\|f\|_{L^{\frac{6}{5-2\epsilon}}}$$
in $\mathbb{R}^3$, for instance. Indeed, if one applies the dispersive estimate and the endpoint Strichartz estimate to the factorized representation of the solution in the framework of \cite{CHPS}, one gets a better last cubic term $\||\phi|^2\phi\|_{L^{\frac{6}{5-2\epsilon}}}=\|\phi\|_{L^{\frac{18}{5-2\epsilon}}}^3$, and it allows us to reduce $s$ down to $\frac{2}{3}+\epsilon$. The regularity requirement in the classical Kato's work on the unconditional uniqueness for the 3D cubic NLS \cite{Kato95} can be covered in this way. We further push $s$ almost down to the critical regularity by 	employing negative order Sobolev norms (Lemma \ref{lemma: negLeibniz}), which are well-known tools in the literature on unconditional uniqueness for NLS. Combining the dispersive estimate, the Strichartz estimates and negative Sobolev norms, we formulate the key trilinear estimates (Lemma \ref{lemma: trilinear}) in our proof. 
\\

\noindent
\textbf{Organization of the paper}. We prove Theorem \ref{thm: main theorem} in Section \ref{sec: outline}, by reducing it to the main Lemma \ref{key estimate}. In Section \ref{sec: example}, we present an example calculation to explain the ingredients involved in the proof of Lemma \ref{key estimate}.  In Section \ref{sec: distinguished tree graph}, we introduce tree graphs for the organization of iterated Duhamel expansions, and give properties of the associated kernels. Finally, we prove the main Lemma \ref{key estimate} in Section \ref{sec: key lemma proof}. We prove the crucial trilinear estimates in Lemma \ref{lemma: trilinear} in Appendix \ref{sec: proof trilinear}.

\section{Proof of the Main Theorem}    \label{sec: outline}

In this section, we prove the main theorem. First, in \S\ref{sec: setup}, we present the setup of the proof. In \S\ref{sec: esy combinatorial method} we review Klainerman-Machedon's board game formulation \cite{KM} of the combinatorial argument of Erd\"os-Schlein-Yau \cite{ESY06, ESY07, ESY09, ESY10}. In \S\ref{subsection: prf thm w lemma}, we reduce the proof of the main theorem to the key lemma (Lemma \ref{key estimate}), via the quantum de Finetti theorem. The rest of the paper is then devoted to the proof of the lemma.

\subsection{Setup of the proof}  \label{sec: setup}
The setup of the proof is similar to that of Chen-Hainzl-Pavlovi\'c-Seiringer \cite{CHPS}, but we use a negative order Sobolev type norm to lower the regularity.\\ 

Let $\{\gamma_1^{(k)}(t)\}_{k\in\mathbb{N}}$ and $\{\gamma_2^{(k)}(t)\}_{k\in\mathbb{N}}$ be two mild solutions in $L_{t\in[0,T)}^{\infty}\mathfrak{H}^s$ to the cubic GP hierarchy with the same initial data, which are either admissible or limiting hierarchies. For uniqueness, it is enough to show that their difference $\{\gamma^{(k)}(t)\}_{k\in\mathbb{N}}$, given by
$$ \gamma^{(k)}(t):=\gamma_1^{(k)}(t)-\gamma_2^{(k)}(t),  \qquad k\in\mathbb{N},$$
vanishes for all $k$ in a certain norm. 

Due to the linearity of the GP hierarchy, it follows that the difference $\{\gamma^{(k)}(t)\}_{k\in\mathbb{N}}$ solves the GP hierarchy with zero initial data. Hence, each $\gamma^{(k)}(t)$ satisfies the integral equation
$$\gamma^{(k)}(t)=i\lambda \int_0^t U^{(k)}(t-t_1)B_{k+1}\gamma^{(k+1)}(t_1)dt_1.$$
Now fix $k$. Iterating this integral equation $(n-1)$ times, we write
$$ \gamma^{(k)}(t)=(i\lambda)^n \int_{t_n\leq \cdots \leq t_1\leq t} U^{(k)}(t-t_1)B_{k+1} \cdots U^{(k+n-1)}(t_{n-1}-t_n)B_{k+n}\gamma^{(k+n)}(t_n)dt_1\cdots dt_n.$$
For notational convenience, we denote $(k+1)$-temporal variables $(t_0,t_1,\cdots,t_n)$ by $\underline{t}_n$ with 
$t_0=t$, and the linear propagator $U^{(i)}(t_j-t_{j'})$ by $U_{j,j'}^{(i)}$. Then, we rewrite $\gamma^{(k)}(t)$ in a compact form as
\begin{equation}   \label{rewrite Duhamel n fold}
 \gamma^{(k)}(t)= (i\lambda)^n\int_{t_n\leq \cdots \leq t_1\leq t} J^k(\underline{t}_n)d\underline{t}_n,
\end{equation}
where
$$J^{k}(\underline{t}_n):=U_{0,1}^{(k)}B_{k+1}U_{1,2}^{(k)}B_{k+2} \cdots U_{n-1,n}^{(k+n-1)}B_{k+n}\gamma^{(k+n)}(t_n).$$
By density, our uniqueness theorem follows from uniqueness in an even weaker norm.
\begin{prop}  \label{prop: zero trace norm}
For all $t\in[0, T)$ with $T>0$ small enough, the trace norm of $S^{(k,-d)}\eqref{rewrite Duhamel n fold}$ vanishes as $n \to \infty$ uniformly in $k$, that is
\begin{equation}  \label{zero trace norm}
{\rm Tr}(|S^{(k,-d)}\gamma^{(k)}(t)|)=0, \quad \forall k,
\end{equation}
where $d>0$ is the dimension.
\end{prop}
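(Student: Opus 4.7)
The plan is to bound the trace norm of $S^{(k,-d)}$ applied to the right-hand side of \eqref{rewrite Duhamel n fold} by a quantity that decays geometrically in $n$ for small $T$, and conclude by sending $n\to\infty$. Pulling the trace norm inside the time integral via the triangle inequality yields
$$\textup{Tr}(|S^{(k,-d)}\gamma^{(k)}(t)|) \leq \int_{0 \leq t_n \leq \cdots \leq t_1 \leq t} \textup{Tr}(|S^{(k,-d)} J^k(\underline{t}_n)|) \, d\underline{t}_n,$$
so it suffices to bound the spacetime integral of the integrand.

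Next, I would expand each contraction $B_{k+j}=\sum_{\ell=1}^{k+j-1}(B^{+}_{\ell;k+j}-B^{-}_{\ell;k+j})$ into its component terms, producing a sum indexed by sequences of contraction choices, which I would encode as rooted binary tree graphs in the sense of Section \ref{sec: distinguished tree graph}. The naive term count of $\prod_{j=1}^n 2(k+j-1)$ over the time-ordered simplex is reduced to at most $C^n$ "distinguished" trees via the Klainerman-Machedon board game of \S\ref{sec: esy combinatorial method}, at the cost of enlarging the time domain of each summand to a union of simplices contained in $[0,t]^n$. This is the step that tames the factorial blowup of the iterated Duhamel expansions.

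For each distinguished tree, I would then invoke the quantum de Finetti theorem (Theorem \ref{thm: strong de Finetti} or \ref{de Finetti}, according to whether the solution is admissible or a limiting hierarchy) applied separately to $\gamma_1^{(k+n)}(t_n)$ and $\gamma_2^{(k+n)}(t_n)$, writing
$$\gamma^{(k+n)}(t_n) = \int d\mu_{1,t_n}(\phi)(|\phi\ra\la\phi|)^{\otimes(k+n)} - \int d\mu_{2,t_n}(\phi)(|\phi\ra\la\phi|)^{\otimes(k+n)}.$$
Under this factorized representation, the contractions $B^{\pm}_{\ell;k+j}$ acting on $(|\phi\ra\la\phi|)^{\otimes(k+n)}$ reproduce the NLS-type cubic nonlinearity in $\phi$, so each distinguished-tree contribution becomes a time-ordered product of free Schrödinger propagators acting on iterated cubic nonlinearities, integrated against the de Finetti measures. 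This is exactly the form to which Lemma \ref{key estimate} is tailored, and I would apply it to obtain a per-tree bound of the form $(CT^{\alpha})^n M^{2(k+n)}$ for some $\alpha>0$, using the uniform-in-$t$ $\mathfrak{H}^s$ bound on both solutions. Summing over the at most $C^n$ distinguished trees and then choosing $T$ small enough that $C\,T^{\alpha} M^2<1$, the right-hand side vanishes as $n\to\infty$, uniformly in $k$.

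The main obstacle lies in Lemma \ref{key estimate} itself, which bundles all the analytic content of the argument: for each distinguished tree one must absorb the negative-order weight $S^{(k,-d)}$ into a chain of weighted trilinear estimates (Lemma \ref{lemma: trilinear}). The delicate point is that the last cubic factor $\||\phi|^2\phi\|$ associated with the distinguished tree cannot be bounded by $\|\phi\|_{H^s}^3$ for $s$ in the range \eqref{eq:s}, so one must trade some of the time integration for decay via the dispersive and endpoint Strichartz estimates, and simultaneously exploit the negative-order Leibniz estimate to pass the derivatives of $S^{(k,-d)}$ through the nonlinearity while keeping the loss small enough to sum. Once the per-tree bound is in hand, the geometric decay in $T$ and the $C^n$ count of distinguished trees combine cleanly to yield \eqref{zero trace norm}.
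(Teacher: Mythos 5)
Your proposal is correct and follows essentially the same route as the paper: triangle inequality, the Klainerman--Machedon board-game reduction to $O(C^n)$ upper-echelon classes, the quantum de Finetti representation of $\gamma^{(k+n)}(t_n)$, the per-tree bound of Lemma \ref{key estimate}, and a geometric series in $n$ for $T$ small. The only step you leave implicit is how the $L^\infty_t\mathfrak{H}^s$ bound controls $\int d\mu^{(i)}_{t_n}(\phi)\,\|\phi\|_{H^{s_\epsilon}}^{2(k+n)}$; the paper handles this by a Chebyshev argument showing $\|\phi\|_{H^{s}}\le M$ almost surely with respect to each de Finetti measure.
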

 
\subsection{Erd\"os-Schlein-Yau Combinatorial method in board-game form}  \label{sec: esy combinatorial method}
One obstacle in showing uniqueness is the number of terms in $J^k(\underline{t}_n)$. Indeed, each $B_{k+i}$ is a sum of $(k+i-1)$ terms. Thus, in the expansion of $J^k(\underline{t}_n)$, there are a total of $k(k+1)\cdots(k+n-1)=\mathcal{O}(n!)$ terms for fixed $k$. We solve this problem by using the powerful combinatorial methods of Erd\"os-Schlein-Yau \cite{ESY06, ESY07, ESY09, ESY10} in the board-game formulation of Klainerman-Machedon \cite{KM}. 

The key idea of the \emph{board game} arguments is that, by grouping the large number of integral terms into equivalence classes in which we have control, we can avoid estimating the rapidly increasing number of terms one by one. Throughout this section, we present a few lemmas that will help us group these terms and derive bounds on certain equivalence classes.

Let $\mu$ be a map from $\{k+1,k+2,\cdots,k+n\}$ to $\{1,2,\cdots,k+n-1\}$ such that $\mu(2)=1$ and $\mu(j)<j$ for all $j$. Denotes by $\mathcal{M}_{k,n}$ the set of all such maps. 

We express the operators $B_{k+i}$ and $J^k$ in terms of map $\mu$.  We have
$$B_{k+i}=\sum_{j=1}^{k+i-1} B_{j;k+i}=\sum_{\mu\in \mathcal{M}_{k,n}} B_{\mu(k+i);k+i}$$
and
\begin{equation}   \label{Jk mapform}
 J^k(\underline{t}_n)=\sum_{\mu \in \mathcal{M}_{k,n}} J^k(\underline{t}_n;\mu),
\end{equation}
where 
$$J^k(\underline{t}_n;\mu)=U^{(k)}(t-t_1)B_{\mu(k+1);k+1}U^{(k+1)}(t_1-t_2) \cdots U^{(k+n-1)}(t_{n-1}-t_n)B_{\mu(k+n); k+n}\gamma^{(k+n)}(t_n).$$

By the definition of $\mu$, we can represent $\mu$ by highlighting exactly one nonzero entry $B_{\mu(k+l),k+l}$ ($l$-th column, $\mu(k+l)$-th row) in each column of a $(k+n-1)\times n$ matrix. Since $\mu(k+l)<k+l$, we set the remaining entries of the matrix equal to $0$. 
\begin{equation}
 \begin{pmatrix}
  \mathbf{B_{1;k+1}}&B_{1;k+2} & \cdots &\mathbf{B_{1;k+n}} \\
  B_{2;k+1} & B_{2;k+2} & \cdots &B_{2;k+n}  \\
  \cdots & \cdots &  \cdots & \cdots  \\
  B_{k;k+1}&\mathbf{B_{k;k+2}}& \cdots &B_{k;k+n}  \\
  0 & B_{k+1;k+2} & \cdots &B_{k+1;k+n}  \\
  \cdots & \cdots & \cdots  & \cdots  \\
  0 & 0 & \cdots & B_{k+n-1;k+n}  \\
 \end{pmatrix}
\end{equation}

Henceforth, we can rewrite \eqref{rewrite Duhamel n fold} as
\begin{equation}
  \gamma^{(k)}(t)=\int_0^{t}\cdots \int_0^{t_{n}}\sum\limits_{\mu \in \mathcal{M}_{k,n}} J^k(\underline{t}_{k+n};\mu)dt_1 \dots dt_n.
\end{equation}

Here the time domain $\{t_n\leq t_{n-1}\leq \cdots \leq t\} \subset [0,t]^n$ is the same for all $\mu\in \mathcal{M}_{k,n}$. We now consider the terms $ I(\mu,\sigma)$ in the sum $\gamma^{(k)}(t)=\sum_{\mu \in \mathcal{M}_{k,n}} I(\mu,\sigma)$.  We have
 
\begin{equation}  \label{basic integral term}
 I(\mu,\sigma)=\int_{t_{\sigma(n)} \leq t_{\sigma(n-1)} \leq\cdots \leq t}  J^k(\underline{t}_{k+n};\mu)dt_1 \dots dt_n,
\end{equation}
where $\sigma$ is a permutation of ${1,2,\dots,n}$. We associate the integral $I(\mu,\sigma)$ the following $(k+n) \times n$ matrix. We may also use it to visualize $B_{\mu(k+j);k+j}$ that correspond to a highlighted entry. 
\begin{equation}
  \begin{pmatrix}   \label{integral matrix}
  t_{\sigma^{-1}(1)} & t_{\sigma^{-1}(2)}  & \cdots & t_{\sigma^{-1}(n)}  \\
  \mathbf{B_{1;k+1}}&B_{1;k+2} & \cdots &\mathbf{B_{1;k+n}} \\
  B_{2;k+1} & B_{2;k+2}  & \cdots &B_{2;k+n}  \\
  \cdots & \cdots &\cdots & \cdots  \\
  B_{k;k+1}&\mathbf{B_{k;k+2}}  &\cdots &B_{k;k+n}  \\
  0 & B_{k+1;k+2}  &\cdots &B_{k+1;k+n}  \\
  \cdots & \cdots  &\cdots  & \cdots  \\
  0 & 0 &  \cdots & B_{k+n}  \\
 \end{pmatrix}
\end{equation}
The columns of matrix \eqref{integral matrix} are labeled $1$ through $n$, and the rows are labeled $0$ through $k+n-1$. 

Each term \eqref{basic integral term} corresponds to a unique matrix of form \eqref{integral matrix}. A key observation is that two matrices of this form can have to the same value for $I(\mu;\sigma)$ given that one matrix can be transformed to another under the so called \emph{acceptable moves}. 

In the following paragraph, we will present a few key lemmas to help us with the combinatorial reduction. For the proof of these lemmas, we refer the reader to \cite{ESY06, ESY07, ESY09, ESY10, KM, CPquintic, UniqueGP}. 

\subsubsection{Acceptable Moves} If $\mu(k+j+1)<\mu(k+j)$, we take the following steps at the same time
\begin{itemize}
 \item exchange the highlights in columns $j$ and $j+1$
 \item exchange the highlights in rows $k+j$ and $k+j+1$
 \item exchange $t_{\sigma^{-1}(j)}$ and $t_{\sigma^{-1}(j+1)}$
\end{itemize}
The exchange only happens when there is a highlight, if there is no highlight we will skip that step. The following lemma highlights the necessity to introduce \emph{equivalence classes}. 

\begin{lemma}  \label{lem:equal integral}
 Let $(\mu,\sigma)$ be transformed into $(\mu',\sigma')$ by an acceptable move. Then, for the corresponding integrals \eqref{basic integral term}, we have $I(\mu,\sigma)=I(\mu',\sigma')$
\end{lemma}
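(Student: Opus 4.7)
The plan is to establish $I(\mu,\sigma)=I(\mu',\sigma')$ directly, by exhibiting a measure-preserving change of integration variables together with a symmetry of the bosonic density matrix that turns one integrand into the other pointwise. Since an acceptable move only affects two adjacent levels $j$ and $j+1$, I can focus attention on the block
\[
B_{\mu(k+j);k+j}\,U^{(k+j)}(t_j - t_{j+1})\,B_{\mu(k+j+1);k+j+1}
\]
inside $J^k(\underline{t}_n;\mu)$, while everything to the left and right of this block is untouched by the move.

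First I would perform the substitution $t_j \leftrightarrow t_{j+1}$. Since $\sigma'$ differs from $\sigma$ precisely by the transposition $(j,j+1)$, this measure-preserving substitution carries the time-ordered simplex $\{t_{\sigma(n)}\le\cdots\le t\}$ attached to $(\mu,\sigma)$ onto the corresponding simplex attached to $(\mu',\sigma')$. The flanking propagators $U^{(k+j-1)}(t_{j-1}-t_j)$ and $U^{(k+j+1)}(t_{j+1}-t_{j+2})$ transform in the obvious way, leaving only the central block above to analyse. Next I would invoke the fact that the two contractions commute modulo a relabeling of their auxiliary indices: the hypothesis $\mu(k+j+1)<\mu(k+j)\le k+j$ means that the two $B$ operators act on distinct base particles, so that after the time substitution the block can be rewritten as
\[
B_{\mu(k+j+1);k+j}\,U^{(k+j)}(t_j - t_{j+1})\,B_{\mu(k+j);k+j+1}
\]
at the cost of swapping the dummy indices $k+j$ and $k+j+1$. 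This swap is exactly what the column-exchange and row-exchange bullets in the definition of an acceptable move record at the level of the combinatorial matrix. To eliminate the residual index swap, I would use that $\gamma^{(k+n)}(t_n)$ is symmetric under transposition of the variables $(x_{k+j},x_{k+j+1})$ (and of $(x'_{k+j},x'_{k+j+1})$), and that every free propagator and contraction strictly to the right of the block commutes with this transposition; sliding the permutation operator through the chain of $U$'s until it hits $\gamma^{(k+n)}(t_n)$, where it acts trivially by bosonic symmetry, closes the argument.

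The main obstacle is not analytical but combinatorial bookkeeping: one must carefully check that the variable each $B_{\ell;m}$ contracts, together with the variables introduced by the later $B$'s, is tracked correctly when the two dummy indices are swapped, and that the row/column exchanges in the board-game picture correspond exactly to the particle permutation absorbed by the bosonic symmetry. Once these identifications are verified, the three simultaneous bullet moves in an acceptable move are seen to implement the combined effect of the time-variable substitution and the symmetric relabeling, so the integrands coincide and $I(\mu,\sigma)=I(\mu',\sigma')$ follows.
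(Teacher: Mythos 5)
Two preliminary remarks are in order. First, the paper itself offers no proof of this lemma: just before the discussion of acceptable moves it states that the proofs of the combinatorial lemmas are deferred to \cite{ESY06, ESY07, ESY09, ESY10, KM, CPquintic, UniqueGP}. So the relevant comparison is with the Klainerman--Machedon argument, and your outline does correctly name its three ingredients: a measure-preserving swap of the two time variables, commutation of contraction operators with free propagators of particles they do not touch, and the bosonic symmetry of $\gamma^{(k+n)}$ to absorb the relabeling of the particles $k+j$ and $k+j+1$.

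The difficulty is that your pivotal step is asserted rather than proved, and as written it is not a valid operator identity. The propagator $U^{(k+j)}(t_j-t_{j+1})=\prod_{i=1}^{k+j}e^{i(t_j-t_{j+1})(\Delta_{x_i}-\Delta_{x_i'})}$ contains the free evolution of the base particle $x_{\mu(k+j+1)}$, and a contraction $B_{j;m}$ (a restriction to a diagonal) does not commute with the free evolution of the particles it contracts; consequently $B_{\mu(k+j+1);k+j+1}$ cannot be moved across $U^{(k+j)}(t_j-t_{j+1})$, and the two contractions cannot be ``commuted modulo a relabeling'' through the intervening propagator as you claim. For the same reason the flanking propagators do not transform ``in the obvious way'' under $t_j\leftrightarrow t_{j+1}$: after the substitution, $U^{(k+j-1)}(t_{j-1}-t_j)$ and $U^{(k+j+1)}(t_{j+1}-t_{j+2})$ carry the wrong time differences, and restoring the correct expression forces one to decompose \emph{every} propagator in the chain into single-particle factors and redistribute them across all levels using the group law $e^{i(a-b)\Delta}e^{i(b-c)\Delta}=e^{i(a-c)\Delta}$ together with the commutations with uninvolved particles. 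That global redistribution --- not a local manipulation of a two-operator block --- is where the entire content of the lemma lies, and it is precisely the part your plan defers. A second point the outline never confronts is the last factor $\gamma^{(k+n)}(t_n)$: when the move acts on columns $n-1$ and $n$, the time swap changes the argument of $\gamma^{(k+n)}$, and ``sliding the permutation operator through the chain until it hits $\gamma^{(k+n)}(t_n)$'' does nothing to reconcile an integrand containing $\gamma^{(k+n)}(t_{n-1})$ with one containing $\gamma^{(k+n)}(t_n)$; this requires a separate bookkeeping of which time variable the terminal density matrix is attached to under the move. To close the gap you should carry out the computation at the level of the fully factorized kernels, as in \cite{KM}, tracking exactly which single-particle evolutions each contraction sees before and after the move.
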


\subsubsection{Equivalence Class}
 Consider the subset $\{\mu_s\} \subset \mathcal{M}_{k,n}$ of \emph{special upper echelon} matrices in which each highlighted element of a higher row is to the left of each highlighted element of a lower row. An example of a special upper echelon matrix (with $k=1,n=4$) is
\begin{equation*}
  \begin{pmatrix}
  \mathbf{B_{1;2}} & \mathbf{B_{1;3}} & B_{1;4} & B_{1;5} \\
  0 & B_{2;3} & B_{2;4} & B_{2;5}  \\
  0 & 0 & \mathbf{B_{3;4}} & B_{3;5}  \\
  0 & 0 & 0 & \mathbf{B_{4;5}}  \\
 \end{pmatrix}
\end{equation*}

\begin{lemma}
 For each element of $\mathcal{M}_{k,n}$ there is a finite number of acceptable moves which brings the matrix to upper echelon form. 
\end{lemma}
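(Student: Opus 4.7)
The plan is to find a well-founded strict order on $\mathcal{M}_{k,n}$ that decreases under every acceptable move. Since upper echelon form is exactly the condition that no acceptable move applies (namely $\mu(k+j+1)\geq\mu(k+j)$ for all $j$), any strictly decreasing chain of moves must terminate at an upper echelon matrix. Concretely, I would associate to each $\mu\in\mathcal{M}_{k,n}$ the tuple $\Phi(\mu)=(\mu(k+1),\mu(k+2),\ldots,\mu(k+n))$ and order $\mathcal{M}_{k,n}$ lexicographically via $\Phi$. Because each coordinate lies in the finite set $\{1,\ldots,k+n-1\}$, the image of $\Phi$ is finite and the lex order is a well-order on it, so any strictly descending chain has finite length.

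The heart of the argument is then to verify that an acceptable move on adjacent columns $j,j+1$ (which is permitted only when $\mu(k+j+1)<\mu(k+j)$) produces $\mu'$ with $\Phi(\mu')<\Phi(\mu)$ lexicographically. The key observation is that the coordinates at positions $l<j$ are untouched: for $l<j$ the definition of $\mathcal{M}_{k,n}$ gives $\mu(k+l)<k+l\leq k+j-1$, so $\mu(k+l)\notin\{k+j,k+j+1\}$ and the row-$(k+j,k+j+1)$ exchange does not alter it; the column-$(j,j+1)$ exchange clearly does not either. At position $j$ the new value is $\mu(k+j+1)$, strictly less than the old value $\mu(k+j)$ by hypothesis. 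Hence $\Phi(\mu')$ agrees with $\Phi(\mu)$ in positions $1,\ldots,j-1$ and is strictly smaller at position $j$, giving the lex decrease no matter what happens at positions $l>j$.

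The subtle point, and what I expect to be the main obstacle, is the bookkeeping for those later columns: a column $l>j+1$ with $\mu(k+l)\in\{k+j,k+j+1\}$ has its row label flipped by the row-exchange step, so $\mu'$ genuinely differs from $\mu$ on more than two coordinates, and one has to be careful that a naive ``inversion count'' does not strictly decrease in general. The lex-order argument is designed precisely to sidestep this, since it depends only on the first coordinate where the tuples disagree, and that coordinate is $j$. Combined with the finiteness of the range of $\Phi$, this yields termination after at most finitely many acceptable moves, and the terminal matrix is special upper echelon by the characterization of ``no move available.''
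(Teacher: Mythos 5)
Your proof is correct, and it takes a genuinely different route from the paper's. The paper (following Klainerman--Machedon) argues constructively: working row by row from the top, it applies acceptable moves to gather the highlighted entries of each row into consecutive columns, observing that later rows can be sorted without disturbing earlier ones, so the procedure halts in a special upper echelon matrix after finitely many steps. You instead give a termination argument via a well-founded potential: the tuple $\Phi(\mu)=(\mu(k+1),\dots,\mu(k+n))$ strictly decreases in lexicographic order under every acceptable move, because positions $l<j$ are untouched (as you note, $\mu(k+l)<k+l\le k+j-1$ keeps them clear of the row swap) while position $j$ drops from $\mu(k+j)$ to $\mu(k+j+1)$; one should also record that the new entries at positions $j$ and $j+1$ are both $<k+j$ and hence likewise unaffected by the subsequent row exchange, which is immediate from $\mu(k+j+1)<\mu(k+j)<k+j$. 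Combined with the correct observation that ``no acceptable move applies'' is precisely the special upper echelon condition ($\mu$ nondecreasing), this shows that \emph{every} maximal sequence of acceptable moves terminates in echelon form, which is slightly stronger than the existence statement of the lemma. Your approach buys rigor and robustness (no bookkeeping about whether sorting a lower row disturbs higher rows, and your caution about the row-exchange flipping entries in later columns -- which can indeed increase a naive inversion count -- is well placed); the paper's approach buys an explicit algorithm identifying which echelon representative one lands on. Both are valid proofs of the lemma.
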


\begin{lemma}   \label{lem:number of echelon matrix}
 Let $C_{k,n}$ be the number of $(k+n-1) \times n$ special upper echelon matrices of the type discussed above. Then $C_{k,n} \leq 2^{k+2n-2}$.
\end{lemma}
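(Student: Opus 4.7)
The plan is to encode each special upper echelon matrix by its sequence of highlighted row indices and then to bound the number of such sequences by an elementary binomial estimate. First I would observe that any element of $\mathcal{M}_{k,n}$ is determined by specifying, for each column $j=1,\dots,n$, the row index $r_j$ of its unique highlighted entry; the constraint $\mu(k+j)<k+j$ is precisely $1\le r_j\le k+j-1$. The special upper echelon hypothesis---that any highlighted entry in a higher row sits to the left of any highlighted entry in a lower row---translates exactly into the requirement that the sequence $(r_1,r_2,\dots,r_n)$ be weakly increasing. Thus $C_{k,n}$ is bounded by the number of weakly increasing sequences $r_1\le r_2\le\cdots\le r_n$ with $r_j\in\{1,\dots,k+n-1\}$; dropping the sharper column-dependent upper bound $r_j\le k+j-1$ only enlarges the count.

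Next I would apply the change of variable $s_j:=r_j+(j-1)$, which turns the weak increase of $(r_j)$ into strict increase of $(s_j)$ and rewrites the range as $1\le s_j\le k+2n-2$. Each admissible sequence therefore determines a distinct $n$-element subset of $\{1,2,\dots,k+2n-2\}$, giving
\[
C_{k,n}\le\binom{k+2n-2}{n}\le\sum_{m=0}^{k+2n-2}\binom{k+2n-2}{m}=2^{k+2n-2},
\]
which is the stated bound.

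There is no genuine obstacle here; the argument is a short stars-and-bars style reduction. The one point requiring care is the dictionary between the geometric special-upper-echelon condition and the algebraic monotonicity of $(r_j)$: the condition is weak monotonicity, not strict, as the displayed $k=1$, $n=4$ example with $r_1=r_2=1$, $r_3=3$, $r_4=4$ already illustrates, since several consecutive columns may share a row. Once that identification is verified, the injection into $n$-element subsets of $\{1,\dots,k+2n-2\}$ together with the crude bound $\binom{N}{n}\le 2^N$ yields the claim immediately.
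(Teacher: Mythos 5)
Your argument is correct. The dictionary you set up is right: with rows numbered from the top, the special upper echelon condition is exactly weak monotonicity of the highlighted row indices $r_1\le r_2\le\cdots\le r_n$, and the displayed example $(1,1,3,4)$ confirms that the monotonicity is weak rather than strict. Dropping the column-dependent constraint $r_j\le k+j-1$ (and the extra normalization $\mu(2)=1$ in the definition of $\mathcal{M}_{k,n}$) only enlarges the count, so the stars-and-bars bound $\binom{k+2n-2}{n}\le 2^{k+2n-2}$ does the job.

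Your route differs in packaging from the paper's (which appears in the source but is suppressed in the compiled version). The paper counts the same objects in two factored steps: first it ``lifts'' the highlights to a single row and counts the ways to partition the $n$ columns into consecutive blocks, which is the number of compositions $2^{n-1}$; then it counts the ways to ``lower'' an ordered family of $i$ blocks onto distinct rows in an order-preserving way, bounded by $\sum_{i}\binom{k+n-1}{i}\le 2^{k+n-1}$; the product gives $2^{k+2n-2}$. Your single-shot encoding as weakly increasing sequences, converted by $s_j=r_j+(j-1)$ into $n$-subsets of $\{1,\dots,k+2n-2\}$, collapses those two steps into one and yields the sharper intermediate bound $\binom{k+2n-2}{n}$ before relaxing to the power of two. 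Both are elementary overcounts of the same set; yours is slightly tighter and, to my eye, cleaner, while the paper's factorization makes more transparent which combinatorial choices (block structure of columns versus choice of occupied rows) are being enumerated.
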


Let $\mu_s$ be a special upper echelon matrix. We say $\mu$ is in the equivalence class of $\mu_s$: $\mu \sim \mu_s$ if $\mu$ can be transformed to $\mu_s$ in finitely many acceptable moves.

\begin{thm}    \label{thm: equal domain integral}
 There exists a subset $D$ of $[0,t]^n$ such that
\small
\begin{equation}
\sum\limits_{\mu \sim \mu_s}\int_0^{t} ... \int_0^{t_{n-1}} J^k(\underline{t}_{n};\mu)dt_{1} \dots dt_{n} = \int\limits_{D} J^k(\underline{t}_{n};\mu)dt_{1} \dots dt_{n}.
\end{equation}
\end{thm}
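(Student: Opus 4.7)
The plan is to follow the classical Klainerman--Machedon board game argument, reducing the claim to the near-disjointness of a family of time simplices. Starting from the original configuration $(\mu,\mathrm{id})$, where $\mathrm{id}$ records the natural ordering $t_n \leq t_{n-1} \leq \cdots \leq t_1 \leq t$, I would apply acceptable moves repeatedly until the matrix reaches the special upper echelon form $\mu_s$; such a reduction terminates in finitely many steps. Since each acceptable move swaps a pair of adjacent time variables while preserving the value of the integral (Lemma \ref{lem:equal integral}), after the full reduction we arrive at a state $(\mu_s,\sigma_\mu)$ for some permutation $\sigma_\mu$ of $\{1,\ldots,n\}$, and
$$\int_0^{t}\!\!\cdots\!\!\int_0^{t_{n-1}} J^k(\underline{t}_n;\mu)\,dt_1\cdots dt_n \;=\; \int_{D_{\sigma_\mu}} J^k(\underline{t}_n;\mu_s)\,dt_1\cdots dt_n,$$
where $D_\sigma\subset[0,t]^n$ is the simplex of $(t_1,\ldots,t_n)$ obeying the ordering dictated by $\sigma$ (as in \eqref{basic integral term}). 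Each such simplex has Lebesgue measure $t^n/n!$, and the $n!$ of them tile $[0,t]^n$ up to a null set.

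The combinatorial heart of the argument is to show that the assignment $\mu \mapsto \sigma_\mu$ is well-defined and injective on the equivalence class $[\mu_s]$. I would fix a canonical reduction algorithm, for instance: at each stage, locate the leftmost adjacent pair of columns whose highlights violate the special-upper-echelon order and apply the acceptable move there. Well-definedness follows because this algorithm is deterministic, and injectivity follows because the procedure is reversible — given $(\mu_s,\sigma)$, running the moves backwards recovers a unique $\mu\sim\mu_s$. Thus the map $\mu\mapsto\sigma_\mu$ embeds $[\mu_s]$ into $S_n$.

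Finally, set $D := \bigcup_{\mu \in [\mu_s]} D_{\sigma_\mu}$. By injectivity, the simplices in this union are indexed by distinct permutations, and any two such simplices intersect only on the measure-zero set where some coordinates coincide. Additivity of the Lebesgue integral then gives
$$\sum_{\mu\sim \mu_s} \int_{D_{\sigma_\mu}} J^k(\underline{t}_n;\mu_s)\,d\underline{t}_n \;=\; \int_D J^k(\underline{t}_n;\mu_s)\,d\underline{t}_n,$$
which, combined with the first display, proves the identity. The main obstacle is the combinatorial bookkeeping in the second step: designing a canonical reduction that is both deterministic (so $\sigma_\mu$ is unambiguously defined) and reversible (so distinct $\mu$'s produce distinct $\sigma_\mu$'s). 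Once the algorithm is in place, the remaining arguments — preservation of the integral under each move and measure-theoretic patching of the simplices — are routine.
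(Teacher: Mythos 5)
Your proposal follows essentially the same route as the paper's proof: reduce each $(\mu,\mathrm{id})$ to $(\mu_s,\sigma_\mu)$ by acceptable moves, invoke Lemma \ref{lem:equal integral} to preserve the integral, argue that distinct $\mu$ in the class yield distinct permutations $\sigma_\mu$, and take $D$ to be the union of the corresponding simplices. The only difference is that you supply more detail (a canonical, reversible reduction algorithm and the measure-zero overlap of the simplices) where the paper simply asserts the injectivity and defers to the references; this is a correct elaboration, not a different argument.
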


\begin{proof}
We perform finitely many acceptable moves on the matrix associated to the integral
\begin{equation*}
I(\mu,id)=\int_0^{t} ... \int_0^{t_{n-1}} J^k(\underline{t}_{n};\mu)dt_{1} \dots dt_{n}.
\end{equation*}
Let $I(\mu,id)$ be the integral associated to the upper echelon matrix obtained. By Lemma \ref{lem:equal integral}
\begin{equation*}
I(\mu,id)=I(\mu_s,\sigma).
\end{equation*}
Assume that $(\mu_1,id)$ and $(\mu_2,id)$ with $\mu_1 \neq \mu_2$ yield the same echelon form $\mu_s$.  Then the corresponding permutations $\sigma_1$ and $\sigma_2$ must be different. Therefore, $D$ can be chosen to be the union of all $\{t \geq t_{\sigma(1)} \geq t_{\sigma(2)} \geq \cdots \geq  t_{\sigma(n)} \}$ for all permutations $\sigma$ which occur in a given equivalence class of some $\mu_s$.
\end{proof}

With the above theorem, we are able to reduce the sum of $\mathcal{O}(n!)$ terms into a sum of $\mathcal{O}(C^n)$ terms:
\begin{equation}  \label{reduced Duhamel n fold}
  \gamma^{(k)}(t)=\sum_{\sigma\in \mathcal{M}_{k,n}} \int_{D_{\sigma,t}} d\underline{t}_nJ^k(\underline{t}_n;\sigma),
\end{equation}
which we can afford.

\subsection{Proof of the main theorem} \label{subsection: prf thm w lemma}
As mentioned above, it suffices to show Proposition \ref{prop: zero trace norm}. For the proof, we uses the framework of Chen-Hainzl-Pavlovi\'c-Seiringer \cite{CHPS} via the quantum de Finetti theorem.\\

Applying the strong or the weak quantum de Finetti theorem, we write 
\begin{equation}  \label{de finetti form}
\gamma^{(k)}(t)=\int d\tilde{\mu}_t(\phi) (\Ket{\phi} \Bra{\phi})^{\otimes{k}}, \quad \forall k\in\mathbb{N},
\end{equation}
where $\tilde{\mu}_t=\mu_t^{(1)}-\mu_t^{(2)}$ with
$$\gamma_i^{(k)}(t)=\int d\mu_t^{(i)}(\phi) (\Ket{\phi} \Bra{\phi})^{\otimes{k}},\quad i=1,2.$$
Plugging \eqref{de finetti form} into $J^k(\underline{t}_n;\sigma)$ in the reduced Duhamel expansion $(\ref{reduced Duhamel n fold})$, we obtain a new expression
\begin{equation}  \label{reduced de Finetti Duhamel n fold}
  \gamma^{(k)}(t)=\sum_{\sigma\in \mathcal{M}_{k,n}} \int_{D_{\sigma,t}} d\underline{t}_n \int d\tilde{\mu}_{t_n}(\phi) J^k(\underline{t}_n;\sigma),
\end{equation}
where
\begin{equation}   \label{Jk before factorization}
 \begin{split}
  J^k(\underline{t}_n;\sigma)=U_{0,1}^{(k)}B_{\sigma(k+1);k+1}U_{1,2}^{(k+1)}B_{\sigma(k+2);k+2}\cdots U_{n-1,n}^{(k+n-1)}B_{\sigma(k+n); k+n} (\Ket{\phi} \Bra{\phi})^{\otimes{(k+n)}}.
 \end{split}
\end{equation}
Then, we formulate the following key lemma that implies Proposition \ref{prop: zero trace norm} (and thus the main theorem).
\begin{lemma}[Key lemma]\label{key estimate}
There exists a uniform constant $C>0$ such that for arbitrarily small $\epsilon>0$, we have
\begin{align}\label{multi-d}
&\int_{[0,T)^{n-1}} d\underline{t}_{n-1} \textup{Tr}(|S^{(k,-d)}J^k(\underline{t}_n;\sigma)|)\leq
\begin{cases}
(CT^\epsilon)^{n-1}\|\phi\|_{H^{s_{\epsilon}}}^{2(k+n)}&\text{ if } d\ge 3\\
(CT^{1/3})^{n-1}\|\phi\|_{H^{1/3}}^{2(k+n)}&\text{ if } d=2\\
(CT^{1/2})^{n-1}\|\phi\|_{H^{1/6}}^{2(k+n)}&\text{ if }d=1,
\end{cases}
\end{align}
where $s_{\epsilon}=\frac{d-2}{2}+\epsilon$.
\end{lemma}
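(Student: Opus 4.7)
The plan is to propagate the de Finetti factorization $\gamma^{(k+n)}(t_n)=(|\phi\rangle\langle\phi|)^{\otimes(k+n)}$ through the alternating sequence of free propagators $U^{(k+\ell)}_{\ell,\ell+1}$ and contraction operators $B_{\sigma(k+\ell+1);k+\ell+1}$ that define $J^k(\underline{t}_n;\sigma)$. The key structural observation is that both operations preserve tensor-product form in the output coordinates: $B^{\pm}_{j;k+\ell+1}$ acting on a factorized density matrix collapses the $(k{+}\ell{+}1)$-st slot into the $j$-th one (placing an extra $|\phi|^2$-type factor there), while $U^{(k+\ell)}_{\ell,\ell+1}$ acts diagonally in each remaining slot. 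Therefore $J^k(\underline{t}_n;\sigma)$ is literally a tensor product over the $k$ output slots, each slot's factor being encoded by the subtree of the distinguished tree graph of Section \ref{sec: distinguished tree graph} rooted at that slot. After unraveling, each such factor is a rank-one operator $|\psi_j^L\rangle\langle\psi_j^R|$ on $L^2(\mathbb{R}^d)$, with $\psi_j^{L,R}$ constructed from $m_j$ nested cubic vertices $|e^{i\tau\Delta}\phi|^2(e^{i\tau\Delta}\phi)$ interleaved with free propagations, and $\sum_{j=1}^k m_j=n$.

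Because $S^{(k,-d)}=\bigotimes_{j=1}^k (1-\Delta_{x_j})^{-d/2}(1-\Delta_{x'_j})^{-d/2}$ respects the tensor structure, the trace norm factorizes:
\begin{equation*}
\textup{Tr}\bigl(\bigl|S^{(k,-d)}J^k(\underline{t}_n;\sigma)\bigr|\bigr)=\prod_{j=1}^k\bigl\|(1-\Delta)^{-d/2}\psi_j^L\bigr\|_{L^2}\bigl\|(1-\Delta)^{-d/2}\psi_j^R\bigr\|_{L^2}.
\end{equation*}
Thus the proof reduces to estimating each $\|\psi_j^{L,R}\|_{H^{-d}}$ after integrating against the $n-1$ internal time variables, and showing this costs at most one $T^\epsilon$ per internal vertex and one $\|\phi\|_{H^{s_\epsilon}}$ per leaf of the full tree (there are $k+n$ leaves and an equal number of $\bar\phi$ factors, yielding the final $\|\phi\|_{H^{s_\epsilon}}^{2(k+n)}$).

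The second step is to estimate each subtree factor. Applying the endpoint Strichartz estimate $\|e^{it\Delta}\phi\|_{L^q_t L^r_x}\lesssim\|\phi\|_{H^{s_\epsilon}}$ to the spectator leaves gives $L^q_t$ integrability in their time variables. At the interior cubic vertices, the naive bound $\||\phi|^2\phi\|_{L^2_x}\leq\|\phi\|_{L^6_x}^3$ used in \cite{CHPS} requires $s\geq 1$ in $d=3$; to go lower, one uses the dispersive estimate $\|e^{it\Delta}f\|_{L^r_x}\lesssim |t|^{-d(1/2-1/r)}\|f\|_{L^{r'}_x}$ to trade $L^2_x$ for a larger exponent $L^{r'}_x$ close to the scaling-critical space, at the cost of an integrable time singularity $|t_\alpha-t_{\alpha+1}|^{-(1-\epsilon)}$. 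The negative-order Leibniz rule (Lemma \ref{lemma: negLeibniz}) then distributes the $d$-worth of smoothing from $(1-\Delta)^{-d/2}$ across the three factors of the cubic nonlinearity; the resulting trilinear inequality is packaged as Lemma \ref{lemma: trilinear}, controlling $\|(1-\Delta)^{-d/2}(\phi_1\bar\phi_2\phi_3)\|_{(\text{dual Strichartz})}\lesssim\prod_i\|\phi_i\|_{H^{s_\epsilon}}$. Since $\int_0^T\tau^{-(1-\epsilon)}\,d\tau\lesssim T^\epsilon$, each of the $n-1$ integrated time variables contributes exactly one $T^\epsilon$; the remaining time $t_n$ is held fixed inside the de Finetti integration and plays no role here.

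The main obstacle is sharpening the trilinear estimate to the critical regularity $s_\epsilon=\tfrac{d-2}{2}+\epsilon$ at the outermost cubic vertex of the distinguished subtree, where no outer free propagation is available to absorb decay. This is precisely where combining the dispersive estimate with the full $d$-order smoothing weight $(1-\Delta)^{-d/2}$ is indispensable: Strichartz alone reproduces only the $H^1$ result of \cite{CHPS} in $d=3$. In $d=1,2$ the Strichartz endpoint is unavailable, so the same scheme yields only $s\geq d/6$, which matches the NLS unconditional uniqueness threshold \cite{Kato95,FTBesov03}. Finally, the $\mathcal{O}(2^{k+2n-2})$ cardinality bound on special upper echelon classes from Lemma \ref{lem:number of echelon matrix} is absorbed into the uniform constant $C$ when the present lemma is later fed into the $n$-th iterated Duhamel expansion.
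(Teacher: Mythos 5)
Your outline follows the paper's strategy: factor $J^k$ into $k$ one-particle kernels via the tree structure, factor the trace norm accordingly, and estimate each kernel recursively with trilinear bounds built from the dispersive estimate, Strichartz estimates, and negative-order Sobolev norms, with the distinguished vertex (the bare $|\phi|^2\phi$ with no intervening propagator) singled out as the obstruction to low regularity. Two corrections are needed. A cosmetic one: each one-particle factor is not a single rank-one operator but a sum of up to $2^{m_j}$ signed rank-one terms, since each $B_{j;k+\ell}=B^+_{j;k+\ell}-B^-_{j;k+\ell}$ doubles the count; the resulting $2^{m_j}$ must be tracked into the constant.

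The substantive issue is the form of your trilinear estimate. The inequality you display, with all three inputs measured in $H^{s_\epsilon}$ and a smoothed output, cannot be iterated down the distinguished branch: if every application returns $H^{s_\epsilon}$ norms of its inputs, then at the bottom of the distinguished tree you are left needing $\||\phi|^2\phi\|_{X}\lesssim\|\phi\|_{H^{s_\epsilon}}^3$ for a positive-regularity $L^2$-based space $X$, which fails for $s_\epsilon<\frac{d}{2}$ --- this is precisely the obstruction you set out to avoid, reappearing one level down. The paper's estimate \eqref{L^p bound} is formulated so that the \emph{same} negative-order norm $W^{-(s_c+\frac{\epsilon}{2}),r_\epsilon}$ appears on the output and on the distinguished input; this norm is carried unchanged from the root of the distinguished tree all the way to the bare cubic term, where only then is $\||\phi|^2\phi\|_{W^{-(s_c+\frac{\epsilon}{2}),r_\epsilon}}\lesssim\|\phi\|_{H^{s_\epsilon}}^3$ applied via Sobolev embedding (Lemma \ref{sobx2}). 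Relatedly, the $H^{-d}$ weight is used exactly once, at the top, to pass to these norms; Lemma \ref{lemma: negLeibniz} does not distribute ``$d$-worth of smoothing'' over the three factors, but places the negative order $s_c+\frac{\epsilon}{2}$ on the single distinguished factor and positive order on the remaining pair. Finally, since $e^{it\Delta}$ is not an isometry on $W^{-s,r_\epsilon}$, when the distinguished function traverses several vertices as a linear factor one must first combine all free propagators acting on it before invoking the dispersive estimate; this rearrangement (Case 2 of Lemma \ref{induction step}) is absent from your sketch and is where the bookkeeping is most delicate.
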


\begin{proof}[Proof of Theorem \ref{thm: main theorem}, assuming Lemma \ref{key estimate}]
We present the proof for the case $d\geq 3$ only. Indeed, when $d=1$ ($d=2$, resp), it can be proved in an analogous way by  replacing the $H^{s_c}$ norm with the $H^{1/6}$ norm (the $H^{1/3}$ norm, resp).

Let $\{\gamma^{(k)}(t)\}_{k\in\mathbb{N}}$ be as above. The goal is to show that $\textup{Tr}(|S^{(k, -d)}\gamma^{(k)}(t)|)=0$ for all $k\in\mathbb{N}$. Applying the triangle inequality and Lemma \ref{key estimate}, we write
\begin{equation}
\begin{aligned}
\textup{Tr}(|S^{(k,-d)}\gamma^{(k)}(t)|)&\leq\sum_{i=1,2}\sum_{\sigma\in \mathcal{M}_{k,n}} \int_{[0,T)^n} d\underline{t}_n \int d\mu^{(i)}_{t_n}(\phi) \textup{Tr}(|S^{(k,-d)}J^k(\underline{t}_n;\sigma)|)\\
&\leq (CT^\epsilon)^{n-1}T \sum_{i=1,2}\sum_{\sigma\in \mathcal{M}_{k,n}}\sup_{t_n\in[0,T)}\int d\mu_{t_n}^{(i)}(\phi) \|\phi\|_{H^{s_{\epsilon}}}^{2(k+n)}.
\end{aligned}
\end{equation}
We claim that there exists $M>0$ such that 
\begin{equation}\label{unif bdd in measure}
\|\phi\|_{H^{s_{\epsilon}}}\leq M\quad\textup{a.s. }\mu_t^{(i)},\quad\forall t\in[0,T).
\end{equation} 
Indeed, since $\{\gamma^{(k)}(t)\}_{k\in\mathbb{N}}\in L_{t\in[0,T)}^\infty\mathfrak{H}^s$, there exists $M>0$ such that
\begin{equation}
\int d\mu_t^{(i)}(\phi) \|\phi\|_{H^s}^{2k}=\textup{Tr}(|S^{(k,s)} \gamma^{(k)}(t)|)<M^{2k}, \quad \forall k\in \mathbb{N}.
\end{equation}
Hence, it follows from the Chebyshev inequality that for $\lambda>M$,
\begin{equation}
\mu_t^{(i)} \big(\{\phi\in L^2: \|\phi\|_{H^s}>\lambda \}\big) \leq \frac{1}{\lambda^{2k}} \int d\mu_t^{(i)}(\phi) \|\phi\|_{H^s}^{2k}<\Big(\frac{M}{\lambda}\Big)^{2k}\to 0\quad \textup{ as }k\to\infty. 
\end{equation}
Returning to $(2.14)$, by $(\ref{unif bdd in measure})$ and Lemma \ref{lem:number of echelon matrix}, we prove that 
\begin{equation}
{\rm Tr}(|S^{(k,-d)}\gamma^{(k)}(t)|)\le (CT^\epsilon)^{n-1}T \cdot2\cdot2^{k+2n-2}\cdot M^{2(k+n)}=\frac{M^{2k}2^{k-1}T}{CT^{\epsilon}}(4CT^{\epsilon}M^2)^n\rightarrow 0\text{ as }n\rightarrow \infty.
\end{equation}
for $T<(4CM^2)^{-1/\epsilon}$. 
\end{proof}

The remainder of our paper will be devoted to proving Lemma \ref{key estimate}. We remark that our proof heavily relies on the following trilinear estimates which combine the dispersive estimate, the Strichartz estimates and negative Sobolev norms. The proof of these trilinear estimates is given in the appendix.
\begin{lemma}[Trilinear estimates]\label{lemma: trilinear}
We define the trilinear form $T$ by
$$T(f,g,h)=(e^{i(t-t_1)\Delta}f) (e^{i(t-t_2)\Delta}g) (e^{i(t-t_3)\Delta}h).$$
$(i)$ $d\geq 3$. For small $\epsilon>0$, we have
\begin{align}
\|T(f,g,h)\|_{L_{t\in[0,T)}^1W_x^{-(s_c+\frac{\epsilon}{2}),r_{\epsilon}}}&\lesssim T^\epsilon\|f\|_{W^{-(s_c+\frac{\epsilon}{2}),r_{\epsilon}}}\|g\|_{H^{s_{\epsilon}}}\|h\|_{H^{s_{\epsilon}}},\label{L^p bound}\\
\|T(f,g,h)\|_{L_{t\in[0,T)}^1H_x^{s_{\epsilon}}}&\lesssim T^\epsilon\|f\|_{H^{s_{\epsilon}}} \|g\|_{H^{s_{\epsilon}}}\|h\|_{H^{s_{\epsilon}}}, \label{H^s bound} 
\end{align}
where $s_{\epsilon}=s_c+\epsilon=\frac{d-2}{2}+\epsilon$, $r_{\epsilon}=\frac{2d}{d+2(1-\epsilon)}$.\\
$(ii)$ $d=2$. For small $\epsilon>0$, we have
\begin{align} 
\|T(f,g,h)\|_{L_{t\in[0,T)}^1W_x^{-(\frac{1}{3}-\frac{\epsilon}{2}), \frac{2}{2-\epsilon}}}&\lesssim T^\epsilon\|f\|_{W^{-(\frac{1}{3}-\frac{\epsilon}{2}),\frac{2}{2-\epsilon}}}\|g\|_{H^{1/3}}\|h\|_{H^{1/3}}, \label{2d bound} \\
\|T(f,g,h)\|_{L_{t\in[0,T)}^1H_x^{1/3}}&\lesssim T^{1/3}\|f\|_{H^{1/3}} \|g\|_{H^{1/3}}\|h\|_{H^{1/3}} \label{2d H13 bound}.
\end{align}
$(ii)$ $d=1$. We have
\begin{align}
\|T(f,g,h)\|_{L_{t\in[0,T)}^1L_x^1}&\lesssim T^{1/2}\|f\|_{L^1}\|g\|_{L^2}\|h\|_{L^2},\label{L^1 bound}\\
\|T(f,g,h)\|_{L_{t\in[0,T)}^1L_x^2}&\lesssim T^{1/2}\|f\|_{L^2} \|g\|_{L^2}\|h\|_{L^2}\label{L^2 bound}.
\end{align}
\end{lemma}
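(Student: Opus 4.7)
The plan is to prove all six estimates by decomposing $T(f,g,h) = u(t)\cdot v(t)\cdot w(t)$, where $u(t)=e^{i(t-t_1)\Delta}f$, $v(t)=e^{i(t-t_2)\Delta}g$, $w(t)=e^{i(t-t_3)\Delta}h$, and combining three ingredients: (1) the dispersive estimate applied to the factor carrying the negative derivative, which provides time decay of order $|t-t_j|^{-(1-\epsilon)}$ in dimensions $\geq 2$ and $|t-t_j|^{-1/2}$ in dimension $1$; (2) the Strichartz estimate or the Sobolev embedding $H^{s_\epsilon}\hookrightarrow L^{p_\epsilon}$ on the other two factors; (3) H\"older's inequality in space to reassemble the three factors. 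Integrating the dispersive decay against $t\in[0,T)$ yields the stated $T^\epsilon$ (resp.\ $T^{1/3}$ or $T^{1/2}$). The crucial device is Lemma \ref{lemma: negLeibniz}, which packages the negative fractional Leibniz rule so that the $-s$ derivative appearing in the left-hand norm can be deposited on the single factor from which we intend to extract dispersive decay; this is precisely what enables the improvement in regularity over \cite{CHPS}.

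For $d\geq 3$, estimate \eqref{L^p bound} is organized around the scaling identities
\[
\frac{1}{r_\epsilon} = \frac{1}{r_\epsilon'} + \frac{2}{p_\epsilon}, \qquad p_\epsilon=\frac{d}{1-\epsilon}, \qquad d\Bigl(\tfrac{1}{2}-\tfrac{1}{r_\epsilon'}\Bigr) = 1-\epsilon,
\]
matched with the (critical) Sobolev embedding $H^{s_\epsilon}\hookrightarrow L^{p_\epsilon}$. I would first apply Lemma \ref{lemma: negLeibniz} to place the $-(s_c+\tfrac{\epsilon}{2})$ derivative onto $u(t)$; since $e^{i(t-t_1)\Delta}$ commutes with $(1-\Delta)^{-(s_c+\epsilon/2)/2}$, the dispersive estimate carries through the negative Sobolev norm to give
\[
\|u(t)\|_{W^{-(s_c+\epsilon/2),\, r_\epsilon'}} \lesssim |t-t_1|^{-(1-\epsilon)}\,\|f\|_{W^{-(s_c+\epsilon/2),\, r_\epsilon}}.
\]
Sobolev embedding then controls $v(t), w(t)$ in $L^{p_\epsilon}$ by $\|g\|_{H^{s_\epsilon}}, \|h\|_{H^{s_\epsilon}}$, H\"older in $x$ reassembles them into $L^{r_\epsilon}$, and $\int_0^T|t-t_1|^{-(1-\epsilon)}\,dt\lesssim T^\epsilon$ closes the estimate. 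The companion bound \eqref{H^s bound} is handled by the (positive) fractional Leibniz rule: we distribute $(1-\Delta)^{s_\epsilon/2}$ across the product and apply the same dispersive/Sobolev template to each resulting trilinear form, the factor carrying the extra derivative playing the role of $u$.

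The $d=1$ estimates are elementary: \eqref{L^1 bound} follows from $\|uvw\|_{L^1}\leq\|u\|_{L^\infty}\|v\|_{L^2}\|w\|_{L^2}$ combined with the 1D dispersive $\|u(t)\|_{L^\infty}\lesssim|t-t_1|^{-1/2}\|f\|_{L^1}$, and \eqref{L^2 bound} follows from Cauchy--Schwarz in time together with the 1D Strichartz admissibility of $(6,6)$, namely $\|e^{i\tau\Delta}g\|_{L^6_tL^6_x}\lesssim\|g\|_{L^2}$. The $d=2$ pair \eqref{2d bound}--\eqref{2d H13 bound} follows the $d\geq 3$ template: the 2D dispersive estimate furnishes the $|t-t_1|^{-(1-\epsilon)}$ decay, and the Sobolev embedding $H^{1/3}\hookrightarrow L^3$ plays the role of the critical embedding in the H\"older decomposition of the two remaining factors.

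The main obstacle I expect is the precise use of Lemma \ref{lemma: negLeibniz} with fractional negative indices: one must verify that the specific exponents $r_\epsilon$ and $p_\epsilon$ are admissible for the negative Leibniz rule, i.e., that the negative derivative is genuinely absorbed by the dispersive factor without forcing a derivative loss on $v$ or $w$. This is exactly where the $\epsilon$ margin (the gap preventing us from reaching the scaling-critical $s=s_c$) is consumed. A closely related subtlety is that the dispersive decay rate $|t-t_1|^{-(1-\epsilon)}$ is only barely integrable on $[0,T)$; the resulting $T^\epsilon$ is tight, but it is precisely this factor that, combined with the combinatorial count $C_{k,n}\lesssim 2^{k+2n-2}$, forces the Duhamel series in Proposition \ref{prop: zero trace norm} to sum for $T$ chosen sufficiently small.
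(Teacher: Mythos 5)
Your treatment of \eqref{L^p bound} is essentially the paper's proof: Lemma \ref{lemma: negLeibniz} deposits the negative derivative on the first factor, the dispersive estimate (which commutes with $(1-\Delta)^{-(s_c+\epsilon/2)/2}$) produces the decay $|t-t_1|^{-(1-\epsilon)}$, and the remaining product is handled by the fractional Leibniz rule and Sobolev embedding. One small correction there: after Lemma \ref{lemma: negLeibniz} the product $(e^{i(t-t_2)\Delta}g)(e^{i(t-t_3)\Delta}h)$ must be estimated in $W^{s_c+\frac{\epsilon}{2},\frac{2d}{d+2-3\epsilon}}$, so one of $g,h$ does absorb a derivative of order $s_c+\tfrac{\epsilon}{2}$ --- harmless since $s_\epsilon>s_c+\tfrac{\epsilon}{2}$, but not the pure $L^{p_\epsilon}$ H\"older picture you describe. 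Your arguments for \eqref{L^1 bound}, \eqref{L^2 bound} and \eqref{2d bound} match the paper.

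The genuine gap is in \eqref{H^s bound} (and hence in \eqref{2d H13 bound}, which the paper obtains from the same proof with $\epsilon=\tfrac13$, $d=2$). You propose to distribute $(1-\Delta)^{s_\epsilon/2}$ by the Leibniz rule and then run ``the same dispersive/Sobolev template,'' with the derivative-carrying factor playing the role of $u$, i.e.\ receiving the dispersive estimate. This cannot work: the dispersive estimate $\|e^{it\Delta}F\|_{L^r}\lesssim|t|^{-d(\frac12-\frac1r)}\|F\|_{L^{r'}}$ yields decay only when $r'<2$, and the right-hand side of \eqref{H^s bound} controls $f$ only in the $L^2$-based norm $H^{s_\epsilon}$, which does not dominate any $L^{r'}$ norm with $r'<2$ (Sobolev embedding only moves upward from $L^2$); taking $r'=2$ gives no decay and hence no $T^\epsilon$. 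The paper's proof of \eqref{H^s bound} uses no dispersive estimate at all: after the fractional Leibniz rule, each of the three factors is placed in a space of the form $L_t^3W_x^{s_\epsilon,\cdot}$ via Sobolev embedding and the homogeneous Strichartz estimates (which accept $L^2$-based data), and the $T^\epsilon$ gain comes from H\"older in the time variable, exploiting that the time exponents used are strictly below the admissible ones. Without this substitution your argument does not close, and \eqref{H^s bound} is precisely the estimate applied recursively to every regular factor and to the linear chains in the distinguished factor.
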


We will prove Lemma \ref{key estimate} in the following sections. To this end, we will proceed as in \cite{CHPS} and use \emph{binary tree graphs} to help organize the terms in $J^k(\underline{t}_n,\sigma)$ (see $(\ref{Jk before factorization})$). For the reader's convenience, before proving the lemma, we give an example calculation in Section \ref{sec: example}. We remark that the trilinear estimates in Lemma \ref{lemma: trilinear} are the key estimates, and will be applied recursively in general case (see Section \ref{sec: key lemma proof}).

\section{An Example}    \label{sec: example}

In this section, we illustrate the ideas of the proof of Lemma \ref{key estimate} via an example.\\

Let $d\geq 3$, $k=2$ and $n=4$ in Lemma \ref{key estimate}. We investigate the example
\begin{equation}\label{Jk example Duhamel}
\int_{[0,T)^3} d\underline{t}_3 \textup{Tr}(|S^{(2,-d)}J^2(\underline{t}_4;\sigma)|)\end{equation}
with a specific map $\sigma$ represented by the matrix 
\begin{equation}  \label{Jk example matrix}
 \begin{pmatrix}
  \mathbf{B_{1;3}} & B_{1;4} & B_{1;5} & B_{1;6} \\
  B_{2;3} & \mathbf{B_{2;4}} & B_{2;5} & B_{2,6}  \\
  0 & B_{3;4} & \mathbf{B_{3;5}} & \mathbf{B_{3,6}} \\
  0 & 0 & B_{4;5} & B_{4,6}\\
 \end{pmatrix}.
\end{equation}
In other words,
\begin{equation}  \label{Jk example}
 J^2=J^2(\underline{t}_4;\sigma)=U_{0,1}^{(2)} B_{1,3}U_{1,2}^{(3)} B_{2,4}U_{2,3}^{(4)} B_{3,5}U_{3,4}^{(5)} B_{3,6} (\Ket{\phi} \Bra{\phi})^{\otimes 6}.
\end{equation}
To this end, in \S 3.1-3.2, we organize the terms in $J^2(\underline{t}_4,\sigma)$. Then, in \S3.3, we estimate the example by the trilinear estimates (Lemma \ref{lemma: trilinear}).

\subsection{Factorization of $J^2$} \label{subsection: factorization}
We will decompose $J^2$ into two one-particle density matrices by examining the effect of the contraction operators starting with the last one on the RHS of \eqref{Jk example}. We denote each factor in the last term $(\Ket{\phi} \Bra{\phi})^{\otimes 6}$ by $u_i$, ordered by increasing index $i$, so that $(\Ket{\phi} \Bra{\phi})^{\otimes 6}=\otimes_{i=1}^6 u_i$.\\

First of all, in \eqref{Jk example}, the last interaction operator $B_{3,6}$ contracts the factor $u_3$ and $u_6$, and leaves all other factors unchanged,
\begin{equation}   \label{B36 action}
B_{3,6} (\otimes_{i=1}^6 u_i)=u_1\otimes u_2\otimes \Theta_4\otimes u_4\otimes u_5.
\end{equation}
where  $$\Theta_4:=B_{1,2}(u_3\otimes u_6).$$
The index $\alpha$ in $\Theta_{\alpha}$ associates $\Theta_{\alpha}$ to the $\alpha$-th interaction operator from the left in \eqref{Jk example}. Since we only run the expansion to the $n$-th level, we have $1\leq \alpha\leq n$. In this specific case, $n=4$, the $4$th interaction operator is $B_{3,6}$. 

Next, $B_{3,5}$ contracts $U_{3,4}^{(1)}\Theta_4$ and $U_{3,4}^{(1)}u_5$,
\begin{equation}   \label{B35 action}
 B_{3,5}U_{3,4}^{(5)}(\eqref{B36 action})=(U_{3,4}^{(2)}(u_1\otimes u_2)) \otimes \Theta_3 \otimes (U_{3,4}^{(1)}u_4),
\end{equation}
where $$\Theta_3:=B_{1,2}((U_{3,4}^{(1)}\Theta_4)\otimes (U_{3,4}^{(1)}u_5)).$$
Then, by the semigroup property, $U_{2,3}^{(i)} U_{3,4}^{(i)}=U_{2,4}^{(i)}$. The operator $B_{2,4}$ contracts $U_{2,4}^{(1)}u_2$ with $U_{2,4}^{(1)}u_4$, which correspond to the 2nd and 5th factors in \eqref{B35 action}.  The other factors are left invariant.
\begin{equation}   \label{B24 action}
 B_{2,4}U_{2,3}^{(4)} (\eqref{B35 action})=(U_{2,4}^{(1)}u_1)\otimes \Theta_2 \otimes (U_{2,3}^{(1)}\Theta_3),
\end{equation}
where $$\Theta_2=B_{1,2}(U_{2,4}^{(2)}(u_2\otimes u_4)).$$
Finally, $B_{1,3}$ contracts $(U_{1,4}^{(1)}u_1)$ and $(U_{1,3}^{(1)}\Theta_3)$ and leaves other factors unchanged.
\begin{equation}   \label{B13 action}
 B_{1,3}U_{1,2}^{(3)} (\eqref{B24 action})=\Theta_1 \otimes (U_{1,2}^{(1)}\Theta_2),
\end{equation}
where $$\Theta_1=B_{1,2}((U_{1,4}^{(1)}u_1)\otimes(U_{1,3}^{(1)}\Theta_3)).$$
Therefore, $J^2$ can be factorized as
\begin{equation}   \label{factorized Jk example}
 J^2=(U_{0,1}^{(1)}\Theta_1) \otimes (U_{0,2}^{(1)}\Theta_2):=J^1_1 \otimes J^1_2.
\end{equation}
In the above expression we may write the factors $J^1_j$ (for $j\leq k=2$) as one-particle matrices and substitute with $u_i=\Ket{\phi}\Bra{\phi}$, for $i\leq k+n=6$. Thus, it follows that
\begin{equation}   \label{relabel J11}
J^1_1=U_{0,1}^{(1)} B_{1,2} U_{1,3}^{(2)} B_{2,3} U_{3,4}^{(3)} B_{2,4}(\Ket{\phi}\Bra{\phi})^{\otimes 4}
\end{equation}
where we relabel the index in operators $B_{\sigma_1(r),r}$ such that the interaction operators in \eqref{relabel J11} correspond to $B_{1,3}, B_{3,5}, B_{3,6}$ respectively, and most importantly keep the connectivity structure between them. The relabeling function $\sigma_1$ (see the notation in \eqref{Jk before factorization}) take values: $\sigma_1(2)=1, \sigma_1(3)=2, \sigma_1(4)=3$. Moreover, for $j=1$, we perform the relabeling in the same spirit find that
\begin{equation}   \label{relabel J12}
 J^1_2=U_{0,2}^{(1)} B_{1,2} U_{2,4}^{(2)} (\Ket{\phi}\Bra{\phi})^{\otimes 2}
\end{equation}
where $\sigma_2(2)=1$.

We note that for any $l<l'$, the interaction operators $B_{\sigma(l),l}$ and $B_{\sigma(l'),l'} $ in $J^2$ (associated to the matrix \eqref{Jk example matrix}) belong to the same factor $J^1_j$ if either $\sigma(l)=\sigma(l')$ or $\sigma(l')=l$. In such cases, we consider them as being \emph{connected}. This connectivity structure is exactly the key point of the Duhamel terms that we want to illustrate using binary tree graphs. Each $\sigma_j$ can be viewed as the restriction of $\sigma$ to $J^1_j$. We call factors that have a free propagator applied to each $\phi$ (like $J^1_2$) \emph{regular} and factors that involve the contractions of $(\Ket{\phi}\Bra{\phi})^{\otimes 2}$ without free propagator in between (like $J^1_1$) \emph{distinguished}.

\subsection{Recursive determination of contraction structure} \label{subsection: contraction structure}
Next, repeating the argument in \S\ref{subsection: factorization}, we express the kernel of each factor explicitly.\\

Consider the distinguished factor $J_1^1$.  For $\alpha=1,2,3$, we denote by $\Theta_\alpha$ the kernel obtained after contracting a two particle density matrix to a one particle matrix via the interaction operator. We will determine $\Theta_\alpha$ recursively in the normal form
\begin{equation}  \label{general kernel}
\Theta_\alpha(x,x')=\sum_{\beta_\alpha} c_{\beta_\alpha}^\alpha \psi_{\beta_\alpha}^\alpha(x) \overline{\chi_{\beta_\alpha}^\alpha}(x'),\ c_{\beta_\alpha}^\alpha=\pm 1
\end{equation}
from the last interaction operator. First, contracting variables by $B_{2,4}$, we get
\begin{equation}  \label{contract by B24}
B_{2,4}(|\phi\ra\la\phi|)^{\otimes 4}=(|\phi\ra\la\phi|)\otimes \Theta_3\otimes (|\phi\ra\la\phi|)
\end{equation}
with
$$\Theta_3(x,x')=|\phi|^2\phi(x) \overline{\phi}(x')-\phi(x)\overline{|\phi|^2\phi}(x')=\sum_{\beta_3=1}^2c_{\beta_3}^3\psi_{\beta_3}^3(x)\overline{\chi_{\beta_3}^3}(x').$$
Next, contracting variables by $B_{2,3}$,
\begin{equation}  \label{contract by B23}
B_{2,3}U_{3,4}^{(3)}\eqref{contract by B24}=(|U_{3,4}\phi\ra\la U_{3,4}\phi|)\otimes \Theta_2,
\end{equation}
where $U_{i,j}:=e^{i(t_i-t_j)\Delta}$ and
\begin{align*}
\Theta_2(x,x')&=\sum_{\beta_3=1}^2c_{\beta_3}^3\Big(U_{3,4}\psi_{\beta_3}^3|U_{3,4}\phi|^2\Big)(x) \overline{U_{3,4}\chi_{\beta_3}^3}(x')-c_{\beta_3}^3U_{3,4}\psi_{\beta_3}^3(x)\Big(\overline{U_{3,4}\psi_{\beta_3}^3}|U_{3,4}\chi|^2\Big)(x')\\
&=:\sum_{\beta_2=1}^4  c_{\beta_2}^2\psi_{\beta_2}^2(x)\overline{\chi_{\beta_2}^2}(x').
\end{align*}
Finally, by the first interaction operator $B_{1,2}$,
$$B_{1,2}U_{1,3}^{(2)}\eqref{contract by B23}=B_{1,2}\Big(|U_{1,4}\phi\ra\la U_{1,4}\phi|\otimes \sum_{\beta_2=1}^4  c_{\beta_2}^2|U_{1,3}\psi_{\beta_2}^2\ra\la U_{1,3}\chi_{\beta_2}^2|\Big)=\Theta_1,$$
where $\Theta_1(x,x')$ is given by
\begin{align*}
&\sum_{\beta_2=1}^4c_{\beta_2}^2 \Big(U_{1,4}\phi U_{1,3}\psi_{\beta_2}^2\overline{U_{1,3}\chi_{\beta_2}^2}\Big)(x) \overline{U_{1,4}\phi}(x')-c_{\beta_2}^2U_{1,4}\phi(x) \Big(\overline{U_{1,4}\phi U_{1,3}\psi_{\beta_2}^2}U_{1,3}\chi_{\beta_2}^2\Big)(x')\\
&=:\sum_{\beta_1=1}^8  c_{\beta_1}^1\psi_{\beta_1}^1(x)\overline{\chi_{\beta_1}^1}(x').
\end{align*}
Therefore, $J_1^1$ can be represented by
$$J_1^1(x,x')=U_{0,1}^{(1)}\Theta_1(x,x')=\sum_{\beta_1=1}^8  c_{\beta_1}^1U_{0,1}\psi_{\beta_1}^1(x)\overline{U_{0,1}\chi_{\beta_1}^1}(x'),$$

Similarly, we write the regular factor $J_2^1$ as
$$J_2^1(\sigma_2; t_2, t_4)=U_{0,1}^{(1)}\tilde{\Theta}_1(x,x')=\sum_{\tilde{\beta}_1=1}^2  \tilde{c}_{\tilde{\beta}_1}^1U_{0,1}\tilde{\psi}_{\tilde{\beta}_1}^1(x)\overline{U_{0,1}\tilde{\chi}_{\tilde{\beta}_1}^1}(x'),$$
where 
\begin{align*}
\tilde{\Theta}_1(x,x')&=(|U_{2,4}\phi|^2U_{2,4}\phi)(x) \overline{U_{2,4}\phi}(x')-U_{2,4}\phi(x)(|U_{2,4}\phi|^2\overline{U_{2,4}\phi})(x')\\
&=:\sum_{\tilde{\beta}_1=1}^2\tilde{c}_{\tilde{\beta}_1}^1\tilde{\psi}_{\tilde{\beta}_1}^1(x)\overline{\tilde{\chi}_{\tilde{\beta}_1}^1}(x').
\end{align*}

\subsection{Recursive Estimates} \label{subsection: example proof of lemma key estimate}

Now, we estimate the example $(\ref{Jk example Duhamel})$ using the structural properties obtained from the previous two subsections. The key tool is the trilinear estimates (Lemma \ref{lemma: trilinear}).
\\

Observe that in the example $(\ref{Jk example Duhamel})$, the distinguished factor $J_1^1$ is independent of $t_2$, and the regular factor $J_2^1$ depends only on $t_2$ and $t_4$ (see \eqref{relabel J11} and \eqref{relabel J12}). Thus, \eqref{Jk example Duhamel} can be factored as
\begin{equation}
\eqref{Jk example Duhamel}=\Big(\int_{[0,T)^2} dt_1 dt_3 \textup{Tr}(|S^{(1,-d)}J_1^1|) \Big)\Big(\int_0^T dt_2 \textup{Tr}(|S^{(1,-d)}J_2^1|)\Big).
\end{equation}
We estimate these two factors separately.
\subsubsection{Distinguished factor}
By \S \ref{subsection: factorization} and \S\ref{subsection: contraction structure}, we have
\begin{equation}\label{Distinguished example2}
\int_{[0,T)^2} dt_1 dt_3 \textup{Tr}(|S^{(1,-d)}J_1^1|)\leq\sum_{\beta_1=1}^8\int_{[0,T)^2} dt_1 dt_3 \|\psi_{\beta_1}^1\|_{H^{-d}}\|\chi_{\beta_1}^1\|_{H^{-d}},
\end{equation}
where for each $\beta_\alpha$, only one out of two terms $\psi_{\beta_\alpha}^\alpha$ and $\chi_{\beta_\alpha}^\alpha$ is cubic. Among the eight integrals on the right hand side of $(\ref{Distinguished example2})$, we estimate the following two cases.\\ \\
\textbf{Case 1.} Consider the integral whose $\psi_{\beta_\alpha}^{\alpha}$'s are all cubic, precisely
\begin{equation}   \label{all cubic case}
\begin{aligned}
&\psi_{\beta_1}^1=U_{1,4}\phi U_{1,3}\psi_{\beta_2}^2\overline{U_{1,3}\chi_{\beta_2}^2}, &&\chi_{\beta_1}^1=U_{1,4}\phi,\\
&\psi_{\beta_2}^2=U_{3,4}\psi_{\beta_3}^3|U_{3,4}\phi|^2, &&\chi_{\beta_2}^2=U_{3,4}\chi_{\beta_3}^3,\\
&\psi_{\beta_3}^3=|\phi|^2\phi,&&\chi_{\beta_3}^3=\phi.
\end{aligned}
\end{equation}
We apply the trilinear estimates \eqref{L^p bound} recursively keeping the $W^{-s_c+\frac{\epsilon}{2}, r_{\epsilon}}$ norm on $\psi_{\beta_\alpha}^\alpha$. Then, we obtain that
\begin{align*}  
\int_{[0,T)^2} dt_1 dt_3 \|\psi_{\beta_1}^1\|_{H^{-d}}\|\chi_{\beta_1}^1\|_{H^{-d}}&\lesssim\int_{[0,T)^2}dt_1dt_3\|\psi_{\beta_1}^1\|_{W^{-(s_c+\frac{\epsilon}{2}), r_{\epsilon}}}\|\chi_{\beta_1}^1\|_{H^{s_\epsilon}} \quad\textup{(by Sobolev ineq)}\\
&=\int_{[0,T)^2}dt_1dt_3\|U_{1,4}\phi U_{1,3}\psi_{\beta_2}^2\overline{U_{1,3}\chi_{\beta_2}^2}\|_{W^{-(s_c+\frac{\epsilon}{2}), r_{\epsilon}}}\|\phi\|_{H^{s_\epsilon}}\\
&\leq C_0T^\epsilon\int_0^T dt_3\|\psi_{\beta_2}^2\|_{W^{-(s_c+\frac{\epsilon}{2}), r_{\epsilon}}}\|\chi_{\beta_2}^2\|_{H^{s_{\epsilon}}}\|\phi\|_{H^{s_{\epsilon}}}^2\quad\textup{(by \eqref{L^p bound})}\\
&=C_0T^\epsilon\int_0^Tdt_3 \|U_{3,4}\psi_{\beta_3}^3|U_{3,4}\phi|^2\|_{W^{-(s_c+\frac{\epsilon}{2}),r_{\epsilon}}}\|\phi\|_{H^{s_{\epsilon}}}^3\\
&\leq (C_0T^{\epsilon})^2\|\psi_{\beta_3}^3\|_{W^{-(s_c+\frac{\epsilon}{2}), r_{\epsilon}}}\|\phi\|_{H^{s_{\epsilon}}}^5\quad\textup{(by \eqref{L^p bound})}\\
&=(C_0T^{\epsilon})^2\||\phi|^2\phi\|_{W^{-(s_c+\frac{\epsilon}{2}), r_{\epsilon}}}\|\phi\|_{H^{s_{\epsilon}}}^5\\
&\lesssim (C_0T^{\epsilon})^2\|\phi\|_{H^{s_{\epsilon}}}^8\quad\textup{(by Sobolev ineq)}.
\end{align*}
\textbf{Case 2.} Consider the integral whose $\psi_{\beta_\alpha}^{\alpha}$'s are all linear except the last one, that is,
\begin{equation}  \label{mixed case}
\begin{aligned}
  &\psi_{\beta_1}^1=U_{1,3}\psi_{\beta_2}^2, &&\chi_{\beta_1}^1=U_{1,3}\chi_{\beta_2}^2|U_{1,4}\phi|^2,\\
  &\psi_{\beta_2}^2=U_{3,4}\psi_{\beta_3}^3, &&\chi_{\beta_2}^2=U_{3,4}\chi_{\beta_3}^3|U_{3,4}\phi|^2,\\
  &\psi_{\beta_3}^3=|\phi|^2\phi,&&\chi_{\beta_3}^3=\phi.
\end{aligned}
\end{equation}
In this case, we first combine linear propagators acting on $\psi_{\beta_3}^3$ so that
$$\psi_{\beta_1}^1=U_{1,3}U_{3,4}(|\phi|^2\phi)=U_{1,4}(|\phi|^2\phi).$$
Then, applying the trilinear estimate \eqref{H^s bound} twice, we obtain
\begin{align*}
\int_{[0,T)^2}dt_1dt_3\|\psi_{\beta_1}^1\|_{H^{-d}}\|\chi_{\beta_1}^1\|_{H^{-d}} &\lesssim \int_{[0,T)^2}dt_1dt_3\|U_{1,4}(|\phi|^2\phi)\|_{H^{-d}} \|U_{1,3}\chi_{\beta_2}^2 |U_{1,4}\phi|^2\|_{H^{s_{\epsilon}}}\\
&= \int_{[0,T)^2}dt_1dt_3\||\phi|^2\phi\|_{H^{-d}} \|U_{1,3}\chi_{\beta_2}^2 |U_{1,4}\phi|^2\|_{H^{s_{\epsilon}}} \\
&\leq C_0T^{\epsilon}\int_0^T dt_3 \||\phi|^2\phi\|_{W^{-(s_c+\frac{\epsilon}{2}), r_{\epsilon}}} \|\chi_{\beta_2}^2\|_{H^{s_{\epsilon}}} \|\phi\|^2_{H^{s_{\epsilon}}} \quad\textup{(by \eqref{H^s bound})}\\
&\leq (C_0T^{\epsilon})^2\||\phi|^2\phi\|_{W^{-(s_c+\frac{\epsilon}{2}), r_{\epsilon}}} \|\phi\|^5_{H^{s_{\epsilon}}} \quad\textup{(by \eqref{H^s bound})}\\
&\lesssim (C_0T^{\epsilon})^2 \|\phi\|_{H^{s_{\epsilon}}}^8\quad\textup{(by Sobolev ineq)},
\end{align*}
which is the same bound as in Example 1.\\

Similarly, one can show that the other six integrals satisfy the same bound. Then, it follows that
$$\int_{[0,T)^2} dt_1 dt_3 \textup{Tr}(|S^{(1,-d)}J_1^1|)\lesssim 8(C_0T^{\epsilon})^2 \|\phi\|_{H^{s_{\epsilon}}}^8.$$

\subsubsection{Regular factor} 
For the regular factor, we have
\begin{equation}\label{Regular example2}
\int_0^T dt_2 \textup{Tr}(|S^{(1,-d)}J_2^1|)\leq\sum_{\tilde{\beta}_1=1}^2\int_0^T dt_2 \|\tilde{\psi}_{\tilde{\beta}_1}^1\|_{H^{-d}}\|\tilde{\chi}_{\tilde{\beta}_1}^1\|_{H^{-d}},
\end{equation}
where for each $\tilde{\beta}_1$, only one out of two terms $\tilde{\psi}_{\tilde{\beta}_1}^1$ and $\tilde{\chi}_{\tilde{\beta}_1}^1$ is cubic. For instance, when $\tilde{\psi}_{\tilde{\beta}_1}^1=|U_{2,4}\phi|^2 U_{2,4}\phi$ and $\tilde{\chi}_{\tilde{\beta}_1}^1=U_{2,4}\phi$, it follows from the trilinear estimate \eqref{H^s bound} that
$$\int_0^Tdt_2\|\tilde{\psi}_{\tilde{\beta}_1}^1\|_{H^{-d}}\|\tilde{\chi}_{\tilde{\beta}_1}^1\|_{H^{-d}}\leq\int_0^Tdt_2\||U_{2,4}\phi|^2U_{2,4}\phi\|_{H^{s_{\epsilon}}}\|U_{2,4}\phi\|_{H^{s_{\epsilon}}}\leq C_0T^\epsilon \|\phi\|_{H^{s_{\epsilon}}}^4.$$
Similarly, one can also show that the other integral satisfies the same bound. Therefore, we get
$$\int_0^T dt_2 \textup{Tr}(|S^{(1,-d)}J_2^1|)\leq 2C_0T^{\epsilon}\|\phi\|_{H^{s_{\epsilon}}}^4$$

\subsubsection{Conclusion} Going back to $(3.14)$), we conclude that
$$\eqref{Jk example Duhamel}\lesssim 2^4 \cdot(C_0T^\epsilon)^3\|\phi\|_{H^{s_{\epsilon}}}^{12}.$$

\section{Binary tree graphs for the general case}   \label{sec: distinguished tree graph}

In order to prove Lemma \ref{key estimate} in the general case, we proceed as in \cite{CHPS}, and use binary tree graphs.  These graphs will help us keep track of the contraction operations applied iteratively in the Duhamel expansion \eqref{reduced de Finetti Duhamel n fold}.

\subsection{The binary tree graphs}   \label{subsec: binary tree graph}
We begin by recalling that, by \eqref{Jk before factorization}, $J^k$ is given by
\begin{align*}
  J^k(\underline{t}_n;\sigma)=U_{0,1}^{(k)}B_{\sigma(k+1);k+1}U_{1,2}^{(k+1)}B_{\sigma(k+2);k+2}\cdots U_{n-1,n}^{(k+n-1)}B_{\sigma(k+n); k+n} (\Ket{\phi} \Bra{\phi})^{\otimes{(k+n)}},
\end{align*}
where
\begin{align*}
(\pphi)^{\otimes (k+n)}(\underline{x}_{k+n};\underline{x}_{k+n}')=\prod_{i=1}^{k+n}(\pphi)(x_i;x_i')
\end{align*}
is a product of one-particle kernels.  Since the free evolution operators $U$ and the contraction operators $B$ preserve the product structure, it follows that we can also decompose
\begin{align}
J^k(t,t_1,\dots,t_r;\sigma;\underline{x}_{k};\underline{x}_{k}')=\prod_{j=1}^k J^1_j(t,t_{\ell_{j,1}},\dots,t_{l_{j,m_j}};\sigma_j;x_{j};x_{j}')\label{J decomposition}
\end{align}
into a product of one-particle kernels $J^1_j$.  We associate to this decomposition $k$ disjoint binary tree graphs $\tau_1, \tau_2, \dots, \tau_k$. These graphs appear as \emph{skeleton graphs} in \cite{ESY06, ESY07, ESY09, ESY10}. As in \cite{CHPS}, we assign \emph{root, internal,} and \emph{leaf} vertices to for each tree $\tau_j$.
\begin{itemize}
 \item A \emph{root} vertex labeled as $W_j$, $j=1,2,\cdots,k$, to represent $J^1_j(x_j,x'_j)$.
 \item An \emph{internal} vertex labeled by $v_l$, $l=1,2,\cdots,n$, corresponding to $B_{\sigma(k+l),k+l}$ and attached to the time variable $t_l$.
 \item A \emph{leaf} vertex $u_i$, $i=1,2,\cdots,k+n$, representing each factor $(\Ket{\phi}\Bra{\phi})(x_i,x'_i)$.
\end{itemize}

Next, we connect the vertices with \emph{edges}, as described below.
\begin{itemize}
 \item If $v_l$ is the smallest value of $l$ such that $\sigma(k+l)=j$, then we connect $v_l$ to the root vertex $W_j$ and write $W_j\sim v_l$ (or equivalently $W_j\sim B_{\sigma(k+l),k+l}$).  If there is no internal vertex connected to a root vertex $W_j$, then we connect $W_j$ to the leaf $u_j$, and write $W_j \sim u_j$.
 \item For any $1<l\leq n$, if $\exists l'>l$ such that $\sigma(k+l)=\sigma(k+l')$ or $\sigma(k+l')=k+l$, then we connect $v_{l}$ and $v_{l'}$ and write $v_{l} \sim v_{l'}$ (or equivalently $B_{\sigma(k+l),k+l}\sim B_{\sigma(k+l'),k+l'}$). In this case, we call $v_l$ the \emph{parent vertex} of $v_{l'}$, and $v_{l'}$ the \emph{child vertex} of $v_l$. We denote the two child vertices of $v_l$ by $v_{k_{-}(l)}$ and $v_{k_{+}(l)}$, with $k_{-}(l)<k_{+}(l)$. 
 
 \item When there is no internal vertex with $r'>r$ and $k+l=\sigma(k+l')$, we connect $v_{l}$ to the leaf vertex $u_{k+l}$ and write $v_{l} \sim u_{k+l}$ (or equivalently $B_{\sigma(k+l),k+l}\sim u_{k+l}$).  If there is no internal vertex with $l'>l$ and $\sigma(k+l)=\sigma(k+l')$, then we connect $v_l$ to the leaf vertex $u_{\sigma(k+l)}$ and write $v_l \sim u_{\sigma(k+l)}$ (or equivalently $B_{\sigma(k+l),k+l}\sim u_{\sigma(k+l)}$). 
\end{itemize}

We remark that it follows from the construction above that each root vertex has only one child vertex, and each internal vertex has exactly two child vertices (which can be internal and leaf). We call the tree $\tau_j$ \emph{distinguished}  if $v_n \in \tau_j$, and \emph{regular} if $v_n\notin \tau_j$. The two leaves connected to $v_n$ are called \emph{distinguished leaf vertices}, and all other leaves are called \emph{regular leaf vertices}.  Clearly, there are $k-1$ regular trees and one distinguished tree in each binary tree graph. 

A sample binary tree graph is given in Figure \ref{Figure binary_tree}, for $J^k$ as in \eqref{Jk example}. Each tree $\tau_j$ has root vertex $W_j$, for $j=1,2$. The two leaf vertices $u_3$ and $u_6$ and the internal vertex $v_4$ (or $B_{3,6}$) are distinguished. $\tau_1$ is the distinguished tree, and is drawn with thick edges.
 
\begin{figure}
\centering
\def\svgwidth{3.5in}
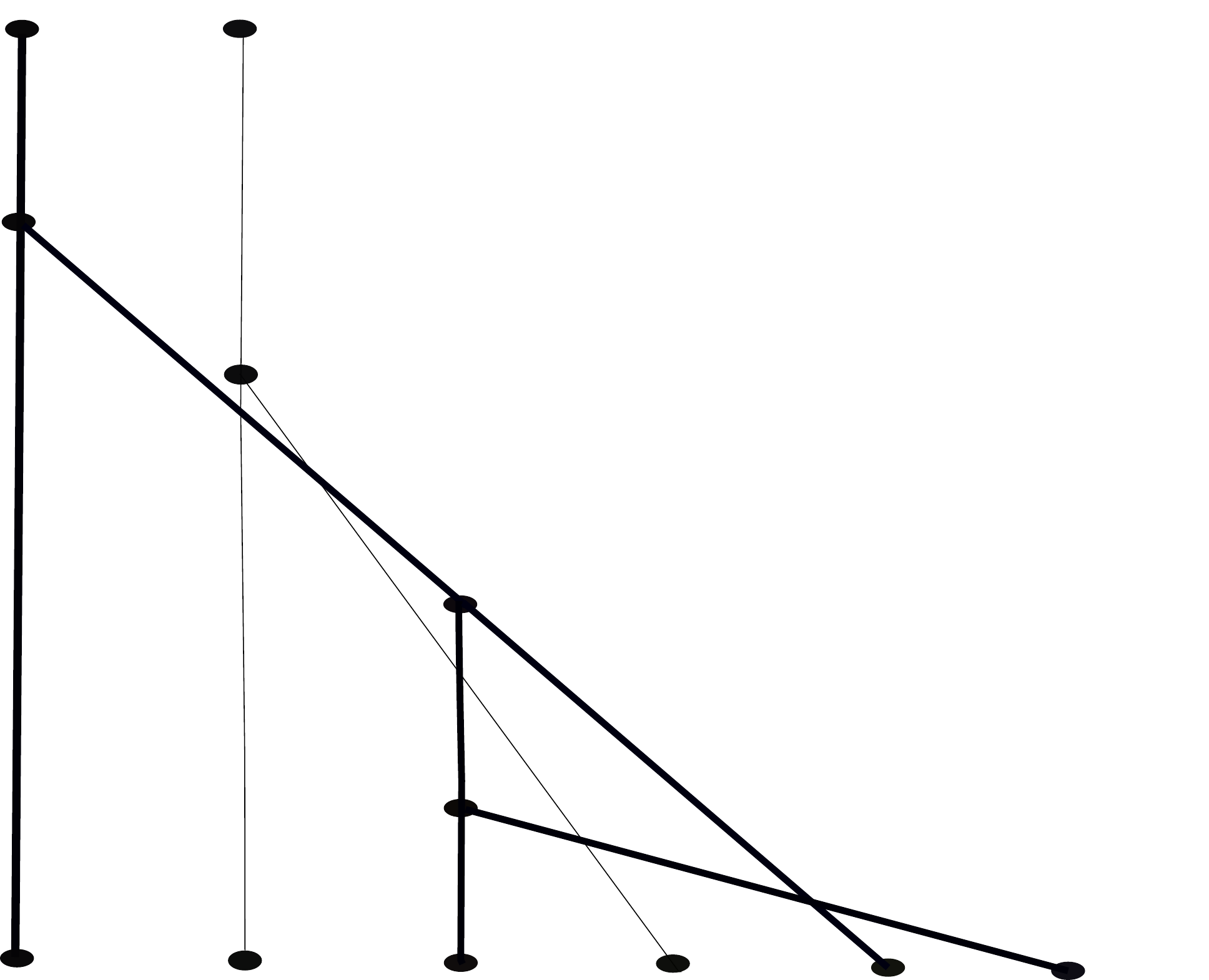
\caption{An example binary tree graphs of $J^k$. It is a disjoint union of two trees $\tau_1$ and $\tau_2$ with root vertices $W_1$ and $W_2$, respectively.  Each tree corresponds to a one-particle kernel in the example in section \ref{sec: example}, where $k=2$ and $n=4$.}\label{Figure binary_tree}
\end{figure}

\subsection{The distinguished one particle kernel $J^1_j$}
Let $\tau_j$ denote the distinguished tree graph.  It has $m_j$ internal vertices $(v_{\ell_j,\alpha})_{\alpha=1}^{m_j}$ and $m_j+1$ leaf vertices $(u_{j,i})_{i=1}^{m_j+1}$.  We enumerate the internal vertices with $\alpha\in\{1,\dots,m_j\}$ and the leaf vertices with $\alpha\in\{m_j+1,\dots,2m_j+2\}$.  To simplify notation, we refer to the vertex $v_{j,\alpha}$ by its label $\alpha$.  We observe that $J_j^1$ has the form
\begin{align}
&J_j^1(t,t_{\ell_{j,1}},\dots,t_{\ell_j,m_j};\sigma_j)\label{factorized J unsimplified}\\
&=U^{(1)}(t-t_{\ell_{j,1}})\cdots U^{(1)}(t_{\ell_{j,1}-1}-t_{\ell_{j,1}})B_{\sigma_j(2),2}\cdots\nonumber\\
&\hspace{1cm}\cdots B_{\sigma_j(\alpha),\alpha}U^{(\alpha)}(t_{\ell_{j,\alpha-1}}-t_{\ell_{j,\alpha-1}+1})\cdots U^{(\alpha)}(t_{\ell_{j,\alpha}-1}-t_{\ell_{j,\alpha}})B_{\sigma_j(\alpha+1),\alpha+1}\cdots\nonumber\\
&\hspace{1cm}\cdots U^{(m_j)}(t_{\ell_j,m_j-1}-t_{l_j,m_j})B_{\sigma_j(m_j+1),m_j+1}(\pphi)^{\otimes(m_j+1)}.\nonumber
\end{align}
By the group property
\begin{align*}
U^{(\alpha)}(t)U^{(\alpha)}(s)=U^{(\alpha)}(t+s),
\end{align*}
and the fact that $\sigma_j(2)=1$, \eqref{factorized J unsimplified} reduces to
\begin{align}
&J_j^1(t,t_{\ell_{j,1}},\dots,t_{\ell_j,m_j};\sigma_j)\label{factorized J}\\
&=U^{(1)}(t-t_{\ell_{j,1}})B_{1,2}\cdots\nonumber\\
&\hspace{1cm}\cdots B_{\sigma_j(\alpha),\alpha}U^{(\alpha)}(t_{\ell_{j,\alpha-1}}-t_{\ell_{j,\alpha}})B_{\sigma_j(\alpha+1),\alpha+1}\cdots\nonumber\\
&\hspace{1cm}\cdots U^{(m_j)}(t_{\ell_j,m_j-1}-t_{l_j,m_j})B_{\sigma_j(m_j+1),m_j+1}(\pphi)^{\otimes(m_j+1)},\nonumber
\end{align}
where $\ell_{j,m_j}=r$.

\subsection{Definition of the kernels $\Theta_\alpha$ at the vertices of the distinguished tree graph}

In this section, we proceed as in \cite{CHPS}, and recursively assign a kernel $\Theta_\alpha$ to each vertex $\alpha$ of the distinguished tree graph.  The kernels at the vertices of the regular tree graph are defined similarly.  We begin by assigning the kernel
\begin{align*}
\Theta_\alpha(x;x'):=\phi(x)\overline{\phi(x')}
\end{align*}
to the leave vertex with label $\alpha\in\{m_j+1,\dots,2m+j+2\}$ (corresponding to $u_{j,\alpha-m_j}$).

Next, we determine $\Theta_{m_j}$ at the distinguished vertex $\alpha=m_j$ from the term on the last line of \eqref{factorized J}, given by
\begin{align*}
B_{\sigma_j(m_j+1),m_j+1}(\pphi)^{\otimes (m_j+1)}
&=(\pphi)^{\otimes(\sigma_j(m_j+1)-1)}\otimes\Theta_{m_j}\\
&\hspace{1cm}\otimes(\pphi)^{\otimes(m_j+1-\sigma_j(m_j+1)-1)}
\end{align*}
where
\begin{align}
\Theta_{m_j}(x;x'):=\tilde{\psi}(x)\overline{\phi(x')}-\phi(x)\overline{\tilde{\psi(x')}}\label{theta_mj}
\end{align}
with $\tilde{\psi}:=|\phi|^2\phi$.  It is obtained from contracting two copies of $\pphi$ at the two leaf vertices $\kappa_-(m_j),\kappa_+(m_j)$ which have $m_j$ as their parent vertex.

Now we are ready to begin the induction.  Let $\alpha\in\{1,\dots,m_j-1\}$.  Suppose that the kernels $\Theta_{\alpha'}$ have been determined for all $\alpha'>\alpha$.  We let $\kappa_-(\alpha),\kappa_+(\alpha)$ label the two child vertices (of internal or leaf type) of $\alpha$,
\begin{align*}
\sigma_j(\alpha)=\sigma_j(\kappa_-(\alpha))\quad,\quad\alpha=\sigma_j(\kappa_+(\alpha)).
\end{align*}
Since $\Theta_{\kappa_-(\alpha)}$ and $\Theta_{\kappa_+(\alpha)}$ have already been determined, we can now define
\begin{align*}
&\Theta_{\alpha}(x;x')\\
&=B_{1,2}((U^{(1)}(t_\alpha-t_{\kappa_-(\alpha)})\otimes(U^{(1)}(t_\alpha-t_{\kappa_+(\alpha)}\Theta_{\kappa_+(\alpha)}))(x;x')\\
&=(U^{(1)}(t_\alpha-t_{\kappa_-(\alpha)})\Theta_{k_-(\alpha)})(x;x')[(U^{(1)}(t_\alpha-t_{\kappa_+(\alpha)})\Theta_{\kappa_+(\alpha)})(x;x)\\
&\hspace{2cm}-(U^{(1)}(t_\alpha-t_{\kappa_+(\alpha)})\Theta_{\kappa_+(\alpha)})(x';x')].
\end{align*}

The induction ends when we obtain the kernel $\Theta_1$ at $\alpha=1$.

\subsection{Key properties of the kernels $\Theta_\alpha$}\label{key properties}  As in \cite{CHPS}, we observe that the kernels $\Theta_\alpha$ satisfy the following properties.
\begin{itemize}
\item $\Theta_\alpha$ can be written as a sum of differences of factorized kernels
\begin{align}
\Theta_\alpha(x;x')=\sum_{\beta_\alpha}c_{\beta_\alpha}^\alpha\chi_{\beta_\alpha}^\alpha(x)\overline{\psi_{\beta_\alpha}^\alpha(x')}\label{factorized}
\end{align}
with at most $2^{m_j-\alpha}$ nonzero coefficients $c_{\beta_\alpha}^\alpha\in\{1,-1\}$.
\item The product $\chi_{\beta_\alpha}^\alpha(x)\overline{\psi_{\beta_\alpha}^\alpha(x')}$ in \eqref{factorized} above is either of the form
\begin{align}
\chi_{\beta_\alpha}^\alpha(x)\overline{\psi_{\beta_\alpha}^\alpha(x')}
&=(U_{\alpha;\kappa_-(\alpha)}\chi_{\beta_{\kappa_-(\alpha)}}^{\kappa_-(\alpha)})(x)\overline{(U_{\alpha;\kappa_-(\alpha)}\psi_{\beta_{\kappa_-(\alpha)}}^{\kappa_-(\alpha)})}(x')\nonumber\\
&\hspace{1cm}(U_{\alpha;\kappa_+(\alpha)}\chi_{\beta_{\kappa_+(\alpha)}}^{\kappa_+(\alpha)})(x)\overline{(U_{\alpha;\kappa_+(\alpha)}\psi_{\beta_{\kappa_+(\alpha)}}^{\kappa_+(\alpha)})}(x)\label{form1}
\end{align}
or
\begin{align}
\chi_{\beta_\alpha}^\alpha(x)\overline{\psi_{\beta_\alpha}^\alpha(x')}
&=(U_{\alpha;\kappa_-(\alpha)}\chi_{\beta_{\kappa_-(\alpha)}}^{\kappa_-(\alpha)})(x)\overline{(U_{\alpha;\kappa_-(\alpha)}\psi_{\beta_{\kappa_-(\alpha)}}^{\kappa_-(\alpha)})}(x')\nonumber\\
&\hspace{1cm}(U_{\alpha;\kappa_+(\alpha)}\chi_{\beta_{\kappa_+(\alpha)}}^{\kappa_+(\alpha)})(x')\overline{(U_{\alpha;\kappa_+(\alpha)}\psi_{\beta_{\kappa_+(\alpha)}}^{\kappa_+(\alpha)})}(x')\label{form2}
\end{align}
for some values of $\beta_{\kappa_-(\alpha)},\beta_{\kappa_+(\alpha)}$ that depend on $\beta_\alpha$.  Observe that above, the function $\chi_{\beta_\alpha}^\alpha$ is either of the cubic form
\begin{align}
\chi_{\beta_\alpha}^\alpha(x)&=(U_{\alpha;\kappa_-(\alpha)}\chi_{\beta_{\kappa_-(\alpha)}}^{\kappa_-(\alpha)})(x)\nonumber\\
&\hspace{1cm}(U_{\alpha;\kappa_+(\alpha)}\chi_{\beta_{\kappa_+(\alpha)}}^{\kappa_+(\alpha)})(x)\overline{(U_{\alpha;\kappa_+}(\alpha)\psi_{\beta_{\kappa_+}}^{\kappa_+(\alpha)})}(x)\label{cubic form}
\end{align}
or the linear form
\begin{align}
\chi_{\beta_\alpha}^\alpha(x)
&=(U_{\alpha;\kappa_-(\alpha)}\chi_{\beta_{\kappa_-(\alpha)}}^{\kappa_-(\alpha)})(x).\label{linear form}
\end{align}
Accordingly, $\psi_{\beta_\alpha}^\alpha$ respectively is either of linear or cubic form, and the product $\chi_{\beta_\alpha}^\alpha(x)\overline{\psi_{\beta_\alpha}^\alpha(x')}$ always has quartic form \eqref{form1} or \eqref{form2}.
\item We call the functions $\chi_{\beta_\alpha}^\alpha,\psi_{\beta_\alpha}^\alpha$ in the sum \eqref{factorized} \emph{distinguished} if they are a function of $|\phi|^2\phi$. In the product on the right hand side of \eqref{form1}, respectively \eqref{form2}, at most one of the four factors is distinguished.  Indeed, this is true for all regular leaf vertices, and for the distinguished vertex \eqref{theta_mj}.  By induction along decreasing values of $\alpha$, it is also true for the internal vertices.
\end{itemize}

\section{Proof of Lemma \ref{key estimate}}  \label{sec: key lemma proof}

In this section, we prove Lemma $\ref{key estimate}$.  We begin by considering the contribution of each factor $J_j^1$ on the right hand side of \eqref{J decomposition} separately.  One of these factors is distinguished, and will be dealt with in Proposition \ref{L^p integral} below.  Proposition \ref{H^s integral} will be for the regular factors.

We note that the analog of Proposition \ref{L^p integral} in \cite{CHPS} has a shorter proof.  This is because, where the authors of \cite{CHPS} work in $L^2$, we work in $W^{-(s_c+\tfrac{\epsilon}{2}),r_\epsilon}$ to achieve lower regularity.  In $W^{-(s_c+\tfrac{\epsilon}{2}),r_\epsilon}$, the linear propagators $e^{it\Delta}$ are no longer isometries, and so we have to carefully rearrange them so that they do not interfere with our proof.  This occurs in case 2 of our proof of Lemma \ref{induction step}.

We begin with Proposition \ref{L^p integral}, which addresses the contribution of the distinguished factor $J_j^1$.  We prove Proposition \ref{L^p integral} by induction.  Lemma \ref{first induction step} will serve as our first induction step, and Lemma \ref{induction step} will serve as the remainder of our proof by induction.


\begin{prop}\label{L^p integral}
Let $d\ge 3$.  Then, for the distinguished tree $\tau_j$, we have the bound
\begin{align}
&\int_{[0,T)^{m_j-1}}dt_1\dots dt_{m_j-1}{\rm Tr}\bigg(\,\bigg|S^{(1,-d)}J^1_j(t,t_1,\cdots,t_{m_j};\sigma_j)\bigg|\,\bigg)\nonumber\\
&\hspace{3cm}\le 2^{m_j-1}C^{m_j-1}T^{\epsilon (m_j-1)}\|\phi\|_{H^{s_\epsilon}}^{2m_j-1}\||\phi|^2\phi\|_{W^{-(s_c+\tfrac{\epsilon}{2}),r_\epsilon}}.\label{distinguished bound d ge 2}
\end{align}
Similarly, when $d=2$, we have the bound
\begin{align}
&\int_{[0,T)^{m_j-1}}dt_1\dots dt_{m_j-1}{\rm Tr}\bigg(\,\bigg|S^{(1,-d)}J^1_j(t,t_1,\cdots,t_{m_j};\sigma_j)\bigg|\,\bigg)\nonumber\\
&\hspace{3cm}\le 2^{m_j-1}C^{m_j-1}T^{\tfrac{1}{3}(m_j-1)}\|\phi\|_{H^{1/3}}^{2m_j-1}\||\phi|^2\phi\|_{W^{-(\tfrac{1}{3}-\tfrac{\epsilon}{2}),r_\epsilon}},
\end{align}
and, when $d=1$, we have the bound
\begin{align}
&\int_{[0,T)^{m_j-1}}dt_1\dots dt_{m_j-1}{\rm Tr}\bigg(\,\bigg|S^{(1,-d)}J^1_j(t,t_1,\cdots,t_{m_j};\sigma_j)\bigg|\,\bigg)\nonumber\\
&\hspace{3cm}\le 2^{m_j-1}C^{m_j-1}T^{\tfrac{1}{2} (m_j-1)}\|\phi\|_{L^2}^{2m_j-1}\||\phi|^2\phi\|_{L^1}.\label{distinguished bound d=1}
\end{align}
\end{prop}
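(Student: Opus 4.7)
The plan is to prove the proposition by induction on the number $m_j$ of internal vertices of the distinguished tree, presenting details for $d\geq 3$; the cases $d=1,2$ go through by the same scheme, using \eqref{L^1 bound}--\eqref{L^2 bound} and \eqref{2d bound}--\eqref{2d H13 bound} in place of \eqref{L^p bound}--\eqref{H^s bound}.

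As preliminaries, first I would write $J_j^1 = U^{(1)}(t - t_{\ell_{j,1}})\Theta_1$ and use that $U^{(1)}$ is an isometry on $H^{-d}$, so that combining the factorized expansion \eqref{factorized} (which has at most $2^{m_j-1}$ nonzero terms, each with $|c_{\beta_1}^1|\leq 1$) with the rank-one trace identity
$$\textup{Tr}\bigl(|S^{(1,-d)}|\chi\ra\la\psi||\bigr) = \|\chi\|_{H^{-d}}\|\psi\|_{H^{-d}}$$
reduces the problem to proving, for each single summand,
$$\int_{[0,T)^{m_j-1}} \|\chi_{\beta_1}^1\|_{H^{-d}}\|\psi_{\beta_1}^1\|_{H^{-d}}\,dt_1\cdots dt_{m_j-1} \lesssim (CT^\epsilon)^{m_j-1}\|\phi\|_{H^{s_\epsilon}}^{2m_j-1}\||\phi|^2\phi\|_{W^{-(s_c+\epsilon/2),r_\epsilon}}.$$
By Section \ref{key properties}, exactly one of the two factors $\chi_{\beta_1}^1,\psi_{\beta_1}^1$ is \emph{distinguished}, in the sense that its recursive definition traces through the tree to the $|\phi|^2\phi$ appearing in $\Theta_{m_j}$. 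Taking $\chi_{\beta_1}^1$ to be the distinguished factor without loss of generality, I would apply Sobolev embedding to place a $W^{-(s_c+\epsilon/2),r_\epsilon}$ norm on $\chi_{\beta_1}^1$ and an $H^{s_\epsilon}$ norm on $\psi_{\beta_1}^1$; both embeddings into $H^{-d}$ hold for $d\geq 3$.

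The base case $m_j = 1$ (Lemma \ref{first induction step}) is immediate, since $\Theta_1 = \Theta_{m_j}$ is just the explicit sum $|\phi|^2\phi\otimes\overline{\phi} - \phi\otimes\overline{|\phi|^2\phi}$ with no time integration to perform. For the inductive step (Lemma \ref{induction step}), I peel off the root vertex $1$, and two cases arise according to whether the distinguished root factor takes the cubic form \eqref{cubic form} or the linear form \eqref{linear form}. When $\chi_{\beta_1}^1$ is cubic, it is literally the trilinear form $T$ of Lemma \ref{lemma: trilinear} at time $t_1$ applied to the kernel data at the two children, so \eqref{L^p bound} integrates $t_1$ and yields a factor of $CT^\epsilon$, a $W^{-(s_c+\epsilon/2),r_\epsilon}$ norm on the distinguished child, and $H^{s_\epsilon}$ norms on the non-distinguished children; the regular sister subtrees are handled by Proposition \ref{H^s integral}, and the inductive hypothesis handles the distinguished subtree. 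When $\chi_{\beta_1}^1$ is linear, i.e.\ $\chi_{\beta_1}^1 = U_{1,\kappa_-(1)}\chi_{\beta_{\kappa_-(1)}}^{\kappa_-(1)}$, the cubic factor sits on the $\psi$ side; the subtlety noted in the remark preceding the proposition is that $U_{1,\kappa_-(1)}$ is not an isometry on $W^{-(s_c+\epsilon/2),r_\epsilon}$, so the propagator cannot be stripped naively. The resolution is to recognize that this propagator will be absorbed into the next trilinear form $T$ one level further down the recursion, so the $W^{-(s_c+\epsilon/2),r_\epsilon}$ norm is never taken of a bare propagated function.

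Iterating the inductive step $m_j - 1$ times accumulates the factor $(CT^\epsilon)^{m_j-1}$ and produces $2m_j - 1$ copies of $\|\phi\|_{H^{s_\epsilon}}$ from the non-distinguished leaves, and the base case at the end of the recursion delivers $\||\phi|^2\phi\|_{W^{-(s_c+\epsilon/2),r_\epsilon}}$; the sum over the at most $2^{m_j-1}$ values of $\beta_1$ yields the overall prefactor $2^{m_j-1}$. The main obstacle is precisely the linear sub-case of the inductive step: because the low-regularity norm $W^{-(s_c+\epsilon/2),r_\epsilon}$ is not preserved by the Schr\"odinger flow, the $L^2$-based argument of \cite{CHPS} does not translate directly, and the key structural insight is to arrange the induction so that, throughout the iteration, the $W^{-(s_c+\epsilon/2),r_\epsilon}$ norm always appears inside an application of the trilinear form $T$, where the dispersive component of \eqref{L^p bound} can absorb the extra propagator at no additional cost.
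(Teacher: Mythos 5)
Your proposal is correct and takes essentially the same route as the paper: the paper's Lemmas \ref{first induction step} and \ref{induction step} implement exactly your recursion (rank-one trace norm, Sobolev embedding into $H^{-d}$ placing the $W^{-(s_c+\epsilon/2),r_\epsilon}$ norm on the distinguished factor, then alternating applications of \eqref{L^p bound} and \eqref{H^s bound} along the distinguished path), and your resolution of the linear case --- combining the chain of propagators on the distinguished function so that it is only ever measured in $W^{-(s_c+\epsilon/2),r_\epsilon}$ inside a trilinear form $T$ --- is precisely the paper's Case 2 of Lemma \ref{induction step}. The only cosmetic difference is organizational: you phrase it as an induction on $m_j$ peeling the root vertex and invoke Proposition \ref{H^s integral} for the regular branches, whereas the paper follows the distinguished path directly and disposes of the regular branches inside the distinguished tree by repeated direct applications of \eqref{H^s bound}.
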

\begin{proof}
For $d\ge 3$, Proposition \ref{L^p integral} follows immediately from Lemma \ref{induction step} below.  Indeed, in the statement of Lemma \ref{induction step}, there are at most $2^{m_j-1}$ terms in the sum over $\beta_1$.

Observe that in the proofs of Lemmas \ref{first induction step} and \ref{induction step}, we use the bounds for $d\ge 3$ presented in Lemma \ref{lemma: trilinear}.  The proof of Proposition \ref{L^p integral} for $d=1,2$ is analogous (we use the corresponding bounds for $d=1,2$ presented in Lemma \ref{lemma: trilinear}).
\end{proof}

We now prove Lemma \ref{first induction step}, which will serve as the first induction step in our proof of Lemma \ref{L^p integral}.

\begin{lemma}\label{first induction step}
Let $d\ge 3$.  Then, the distinguished factor
\begin{align*}
J_j^1(\underline{t}_n;\sigma_j;x,x')=U^{(1)}(t-t_1)\sum_{\beta_1}c_{\beta_1}^1\psi_{\beta_1}^1(x)\chi_{\beta_1}^1(x')
\end{align*}
satisfies the following.  For each value of $\beta_1$, either there exits a non-negative integer $\ell<m_j-1$ such that
\begin{align}
&\int_{[0,T)^{m_j-1}}dt_1\dots dt_{m_j-1}{\rm Tr}\bigg(\,\bigg|S^{(1,-d)}U^{(1)}(t-t_1)c_{\beta_1}^1|\psi_{\beta_1}^1\ra\la\chi_{\beta_1}^1|\bigg|\,\bigg)\nonumber\\
&\le(CT^\epsilon)^{\ell}\sum_{\beta_1}\int_{[0,T)^{m_j-\ell-1}}dt_{\ell+1}\cdots dt_{m_j-1}\nonumber\\
&\hspace{2cm}\|(U_{\ell+2}f_{\ell+2}^1) (U_{\ell+2}f_{\ell+2}^2) (U_{\ell+2}f_{\ell+2}^3)\|_{W^{-s_c+\frac{\epsilon}{2},r_\epsilon}}\|U_{\ell+2}f_{\ell+2}^2\|_{H^{s_\epsilon}}\cdots\|U_{\ell+2}f_{\ell+2}^{2\ell+4}\|_{H^{s_\epsilon}},\label{dist}
\end{align}
where the functions $f$ are defined in terms of the functions $\psi_{\beta_\alpha}^\alpha$ and $\chi_{\beta_\alpha}^\alpha$ as described in the proof below, or
\begin{align}
&\int_{[0,T)^{m_j-1}}dt_1\dots dt_{m_j-1}{\rm Tr}\bigg(\,\bigg|S^{(1,-d)}U^{(1)}(t-t_1)c_{\beta_1}^1|\psi_{\beta_1}^1\ra\la\chi_{\beta_1}^1|\bigg|\,\bigg)\nonumber\\
&\le C^{m_j-1}T^{\epsilon (m_j-1)}\|\phi\|_{H^{s_\epsilon}}^{2m_j-1}\||\phi|^2\phi\|_{W^{-(s_c+\tfrac{\epsilon}{2}),r_\epsilon}}.\label{done early}
\end{align}
Moreover, $f_{\ell+2}^1$ is the only distinguished fuction on the right hand side of \eqref{dist}.
\begin{proof}
We recall that $U_{i,j}:=e^{i(t_i-t_j)\Delta}$, and let $U_j:=U_{j,j+1}$.  We have
\begin{align}
&\int_{[0,T)^{m_j-1}}dt_1\dots dt_{m_j-1}{\rm Tr}\bigg(\,\bigg|S^{(1,-d)}U^{(1)}(t-t_1)c_{\beta_1}^1|\psi_{\beta_1}^1\ra\la\chi_{\beta_1}^1|\bigg|\,\bigg)\nonumber\\
&\le\int_{[0,T)^{m_j-1}}dt_1\cdots dt_{m_j-1}\|\psi_{\beta_1}^1\|_{H^{-d}}\|\chi_{\beta_1}^1\|_{H^{-d}}\label{choice}.
\end{align}
Now, we recall from subsection \ref{key properties} that one of functions $\psi_{\beta_1}^1,\chi_{\beta_1}^1$ is distinguished.  Moreover the distinguished function is either of the cubic form \eqref{cubic form} or of the linear form \eqref{linear form}.  We will now label the distinguished function $f_1^1$ and the regular function $f_1^2$.

\noindent\textbf{Case 1: $f_1^1$ is cubic.} If $f_1^1$ is cubic, then, by \eqref{form1} and \eqref{form2}, $f_1^1$ and $f_1^2$ are of the form
\begin{align*}
f_1^1&=(U_2f_2^1)(U_2f_2^2)(U_2f_2^3),\\
f_1^2&=U_2f_2^4.
\end{align*}
As in Section \ref{sec: example}, we apply the $W^{-s_c+\frac{\epsilon}{2},r_\epsilon}$ norm to the distinguished function $f_1^1$ and the $H^{s_\epsilon}$ norm to the regular function $f_1^2$ and find that
\begin{align}
\eqref{choice}&=\int_{[0,T)^{m_j-1}}dt_1\cdots dt_{m_j-1}\|f_1^1\|_{H^{-d}}\|f_1^2\|_{H^{-d}}\nonumber\\
&=\int_{[0,T)^{m_j-1}}dt_1\cdots dt_{m_j-1}\|(U_2f_2^1)(U_2f_2^2)(U_2f_2^3)\|_{H^{-d}}\|U_2f_2^4\|_{H^{-d}}\nonumber\\
&\le C\int_{[0,T)^{m_j-1}}dt_1\cdots dt_{m_j-1}\|(U_2f_2^1)(U_2f_2^2)(U_2f_2^3)\|_{W^{-(s_c+\tfrac{\epsilon}{2}),r_\epsilon}}\|U_2f_2^4\|_{H^{s_\epsilon}},\nonumber
\end{align}
which is of the form \eqref{dist}.

\noindent\textbf{Case 2: $f_1^2$ is cubic.}  In this case, we have that $f_1^1$ and $f_1^2$ are of the form
\begin{align*}
f_1^1&=U_2f_2^1,\\
f_1^2&=(U_2f_2^2)(U_2f_2^3)(U_2f_2^4).
\end{align*}
Since $f_1^1$ is distinguished, there exists $\ell\ge 1$ such that
$$f_{2}^1=U_{3}f_{3}^1,\ f_{3}^1=U_{4}f_{4}^1, ...,  f_{\ell}^1=U_{\ell+1}f_{\ell+1}^1, $$
and
\begin{equation}\label{distinguished cubic2}
f_{\ell+1}^1=(U_{\ell+2}f_{\ell+2}^1) (U_{\ell+2}f_{\ell+2}^2) (U_{\ell+2}f_{\ell+2}^3)\textup{ or }f_{\ell+1}^1=|\phi|^2\phi,
\end{equation}
where $f_{\ell+2}^1$ (or $f_{\ell+2}^2$ or $f_{\ell+2}^3$)  is a distinguished function.  Thus, combining all propagators acting on $f_{\ell+1}^1$, we write
\begin{align*}
f_1^1=U_{1,\ell+2}f_{\ell+1}^1.
\end{align*}
Again, we apply the $W^{-s_c+\frac{\epsilon}{2},r_\epsilon}$ norm to the distinguished function $f_1^1$ and the $H^{s_\epsilon}$ norm to the regular function $f_1^2$ and find that
\begin{align}
\eqref{choice}&=\int_{[0,T)^{m_j-1}}dt_1\cdots dt_{m_j-1}\|f_1^1\|_{H^{-d}}\|f_1^2\|_{H^{-d}}\nonumber\\
&=\int_{[0,T)^{m_j-1}}dt_1\cdots dt_{m_j-1}\|f_{\ell+1}^1\|_{H^{-d}}\|(U_2f_2^2)(U_2f_2^3)(U_2f_2^4)\|_{H^{-d}}\nonumber\\
&\lesssim\int_{[0,T)^{m_j-1}}dt_1\cdots dt_{m_j-1}\|f_{\ell+1}^1\|_{W^{-s_c+\frac{\epsilon}{2},r_\epsilon}}\|(U_2f_2^2)(U_2f_2^3)(U_2f_2^4)\|_{H^{s_\epsilon}}\label{first fs}.
\end{align}
Since $f_{\ell+1}$ doesn't depend on $t_1,\dots,t_\ell$, we find that after $\ell$ applications of \eqref{H^s bound},
\begin{align}
\eqref{first fs}\le (CT^\epsilon)^{\ell}\int_{[0,T)^{m_j-\ell-1}}dt_{\ell+1}\cdots dt_{m_j-1}\|f_{\ell+1}^1\|_{W^{-s_c+\frac{\epsilon}{2},r_\epsilon}}\|f_{\ell+1}^2\|_{H^{s_\epsilon}}\cdots\|f_{\ell+1}^{2\ell+4}\|_{H^{s_\epsilon}}.\label{ell fs}
\end{align}
If $f_{\ell+1}^1=|\phi|^2\phi$, then it follows from the binary tree graph structure presented in section \ref{sec: distinguished tree graph} that $\ell=m_j-1$ and $f_{\ell+1}^{\ell''}=\phi$ for $\ell''\ge 2$, and so we have proven \eqref{done early}.  Otherwise, if $f_{\ell+1}^1=(U_{\ell+2}f_{\ell+2}^1) (U_{\ell+2}f_{\ell+2}^2) (U_{\ell+2}f_{\ell+2}^3)$, then we have that
\begin{align*}
\eqref{ell fs}&\le(CT^\epsilon)^{\ell}\int_{[0,T)^{m_j-\ell-1}}dt_{\ell+1}\cdots dt_{m_j-1}\\
&\hspace{2cm}\|(U_{\ell+2}f_{\ell+2}^1) (U_{\ell+2}f_{\ell+2}^2) (U_{\ell+2}f_{\ell+2}^3)\|_{W^{-s_c+\frac{\epsilon}{2},r_\epsilon}}\|f_{\ell+1}^2\|_{H^{s_\epsilon}}\cdots\|f_{\ell+1}^{2\ell+4}\|_{H^{s_\epsilon}}\\
&=(CT^\epsilon)^{\ell}\int_{[0,T)^{m_j-\ell-1}}dt_{\ell+1}\cdots dt_{m_j-1}\\
&\hspace{2cm}\|(U_{\ell+2}f_{\ell+2}^1) (U_{\ell+2}f_{\ell+2}^2) (U_{\ell+2}f_{\ell+2}^3)\|_{W^{-s_c+\frac{\epsilon}{2},r_\epsilon}}\|U_{\ell+2}f_{\ell+2}^2\|_{H^{s_\epsilon}}\cdots\|U_{\ell+2}f_{\ell+2}^{2\ell+4}\|_{H^{s_\epsilon}},
\end{align*}
which is of the form \eqref{dist}.
\end{proof}
\end{lemma}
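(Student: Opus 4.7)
The plan is to start from the elementary bound $\textup{Tr}(|S^{(1,-d)}|\psi\ra\la\chi||)\le \|\psi\|_{H^{-d}}\|\chi\|_{H^{-d}}$ and exploit the isometry of $U^{(1)}(t-t_1)$ on $H^{-d}$ acting simultaneously on the two sides of the density matrix to strip off the outermost propagator. This reduces the left-hand side of \eqref{dist} to the iterated time integral of $\|\psi^1_{\beta_1}\|_{H^{-d}}\|\chi^1_{\beta_1}\|_{H^{-d}}$. From subsection \ref{key properties}, exactly one of $\psi^1_{\beta_1}, \chi^1_{\beta_1}$ is distinguished (call it $f_1^1$, with partner $f_1^2$ regular), and $f_1^1$ has either the cubic form \eqref{cubic form} or the linear form \eqref{linear form}. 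The argument splits on these two cases.

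In the cubic case, $f_1^1 = (U_2 f_2^1)(U_2 f_2^2)(U_2 f_2^3)$ with $f_2^1$ distinguished. I would apply the Sobolev embeddings $W^{-(s_c+\epsilon/2),r_\epsilon}\hookrightarrow H^{-d}$ and $H^{s_\epsilon}\hookrightarrow H^{-d}$ (both valid for $d\ge 3$) to place the negative-order Lebesgue norm on the distinguished side and the positive-order Sobolev norm on the regular side. This produces exactly the $\ell=0$ instance of \eqref{dist}.

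In the linear case, $f_1^1 = U_2 f_2^1$ with $f_2^1$ still distinguished. I would descend along the distinguished branch, iteratively absorbing propagators by the group property, until reaching either the first cubic vertex or the leaf $|\phi|^2\phi$. Writing $\ell$ for the number of such linear steps, the group property collapses this chain to $f_1^1 = U_{1,\ell+2} f_{\ell+1}^1$, while the partner $f_1^2$ is cubic. Placing Sobolev norms as before and applying the trilinear estimate \eqref{H^s bound} $\ell$ times to integrate out $t_1,\dots,t_\ell$---each application costing a factor $CT^\epsilon$ and stripping one level of the cubic partner---leaves an integral over $t_{\ell+1},\dots,t_{m_j-1}$. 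If the descent bottoms out at $|\phi|^2\phi$ (which forces the tree to be a caterpillar with $\ell=m_j-1$ and every remaining sibling equal to $\phi$), we read off \eqref{done early} directly. Otherwise $f_{\ell+1}^1$ is cubic and the remaining integral matches \eqref{dist}.

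The main obstacle, absent from the $L^2$-based treatment in \cite{CHPS}, is that the norm $W^{-(s_c+\epsilon/2),r_\epsilon}$ is not preserved by $e^{it\Delta}$. Consequently, in the linear case one cannot freely slide propagators past the negative-order Lebesgue norm; the group-property combination must be carried out \emph{before} that norm is installed on the distinguished factor. This ordering is precisely what arranges the surviving distinguished object in the form $(U_{\ell+2}f_{\ell+2}^1)(U_{\ell+2}f_{\ell+2}^2)(U_{\ell+2}f_{\ell+2}^3)$, evaluated at a common time $t_{\ell+2}$, so that the inductive step of Lemma \ref{induction step} can subsequently apply the trilinear estimate \eqref{L^p bound} to continue the recursion down the tree.
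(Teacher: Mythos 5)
Your proposal is correct and follows essentially the same route as the paper's proof: the same reduction via the $H^{-d}$ isometry, the same case split on whether the distinguished factor is cubic or linear, the same collapse of the linear chain by the group property followed by $\ell$ applications of \eqref{H^s bound}, and the same terminal dichotomy yielding \eqref{dist} or \eqref{done early}. Your closing remark about installing the $W^{-(s_c+\epsilon/2),r_\epsilon}$ norm only after combining propagators is exactly the point the paper emphasizes in its discussion preceding Proposition \ref{L^p integral}.
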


In Lemma \ref{induction step}, we complete the induction process.  Observe that in the proof below, we proceed as in the proof of Lemma \ref{first induction step}.  In each induction step, we apply the $W^{s_c+\frac{\epsilon}{2},r_\epsilon}$ norm to the distinguished function, and the $H^{s_\epsilon}$ norm to the regular functions.

\begin{lemma}\label{induction step}
Let $d\ge 3$.  Then, the distinguished factor
\begin{align*}
J_j^1(\underline{t}_n;\sigma_j;x,x')=U^{(1)}(t-t_1)\sum_{\beta_1}c_{\beta_1}^1\psi_{\beta_1}^1(x)\chi_{\beta_1}^1(x')
\end{align*}
satisfies the following.  For each value of $\beta_1$,
\begin{align}
&\int_{[0,T)^{m_j-1}}dt_1\dots dt_{m_j-1}{\rm Tr}\bigg(\,\bigg|S^{(1,-d)}U^{(1)}(t-t_1)c_{\beta_1}^1|\psi_{\beta_1}^1\ra\la\chi_{\beta_1}^1|\bigg|\,\bigg)\nonumber\\
&\le C^{m_j-1}T^{\epsilon (m_j-1)}\|\phi\|_{H^{s_\epsilon}}^{2m_j-1}\||\phi|^2\phi\|_{W^{-(s_c+\tfrac{\epsilon}{2}),r_\epsilon}}.\label{done on time}
\end{align}
\begin{proof}
By Lemma \ref{first induction step}, we have that for each $\beta_1$, either \eqref{done on time} holds, or there is a non-negative integer $\ell<m_j-1$ such that
\begin{align}
&\int_{[0,T)^{m_j-1}}dt_1\dots dt_{m_j-1}{\rm Tr}\bigg(\,\bigg|S^{(1,-d)}U^{(1)}(t-t_1)c_{\beta_1}^1|\psi_{\beta_1}^1\ra\la\chi_{\beta_1}^1|\bigg|\,\bigg)\nonumber\\
&\le(CT^\epsilon)^{\ell}2^{m_j-1}\int_{[0,T)^{m_j-\ell-1}}dt_{\ell+1}\cdots dt_{m_j-1}\nonumber\\
&\hspace{2cm}\|(U_{\ell+2}f_{\ell+2}^1) (U_{\ell+2}f_{\ell+2}^2) (U_{\ell+2}f_{\ell+2}^3)\|_{W^{-s_c+\frac{\epsilon}{2},r_\epsilon}}\|U_{\ell+2}f_{\ell+2}^2\|_{H^{s_\epsilon}}\cdots\|U_{\ell+2}f_{\ell+2}^{2\ell+4}\|_{H^{s_\epsilon}}\label{lth step},
\end{align}
where $f_{\ell+2}^1$ is the only distinguished function on the right hand side of \eqref{lth step}.  We recall from Section \ref{sec: distinguished tree graph} that $f_{\ell+2}^1$ is either of the cubic form \eqref{cubic form} or the linear for \eqref{linear form}.

Now, we will proceed by induction, and show that in each induction step, we can bound \ref{lth step} by an expression of the same form, but with a larger value of $\ell$.  In the last induction step, we find that \eqref{done} holds, which completes the proof of \eqref{done on time}.  Indeed, this follows from the binary tree graph structure presented in section \ref{sec: distinguished tree graph}.

\noindent\textbf{Case 1: $f_{\ell+2}^1$ is cubic.} If $f_{\ell+2}^1$ is cubic, then
\begin{align*}
f_{\ell+2}^1&=(U_{\ell+3}f_{\ell+3}^1) (U_{\ell+3}f_{\ell+3}^2) (U_{\ell+3}f_{\ell+3}^3),\\
f_{\ell+2}^2&=U_{\ell+3}f_{\ell+3}^4,\ f_{\ell+2}^3=U_{\ell+3}f_{\ell+3}^5,...,\ f_{\ell+2}^{2\ell+4}=U_{\ell+3}f_{\ell+3}^{2\ell+6}.
\end{align*}
Since $f_{\ell+2}^1$ is distinguished, one of $f_{\ell+3}^1, f_{\ell+3}^2, f_{\ell+3}^3$ is distinguished, say $f_{\ell+3}^1$. Then, applying $(\ref{L^p bound})$, we get the integral of the form $(\ref{lth step})$ back:
\begin{align*}
(\ref{lth step})&\lesssim (CT^{\epsilon})^{\ell+1}2^{m_j-1}\int_{[0,T)^{m_j-\ell-2}}dt_{\ell+2}\cdot\cdot\cdot dt_{m_j-1} \|f_{\ell+2}^1\|_{W^{-(s_c+\frac{\epsilon}{2}),r_\epsilon}}\|f_{\ell+2}^2\|_{H^{s_{\epsilon}}}\cdot\cdot\cdot \|f_{\ell+2}^{2\ell+4}\|_{H^{s_{\epsilon}}}.\\
&=(CT^{\epsilon})^{\ell+1}2^{m_j-1}\int_{[0,T)^{m_j-\ell-2}}dt_{\ell+2}\cdot\cdot\cdot dt_{m_j-1} \|(U_{\ell+3}f_{\ell+3}^1) (U_{\ell+3}f_{\ell+3}^2) (U_{\ell+3}f_{\ell+3}^3)\|_{W^{-(s_c+\frac{\epsilon}{2}),r_{\epsilon}}}\\
&\ \ \ \ \ \ \ \ \ \ \ \ \ \ \ \ \ \ \ \ \ \ \ \ \ \ \ \ \ \ \ \ \ \times\|f_{\ell+3}^4\|_{H^{s_{\epsilon}}}\cdot\cdot\cdot \|f_{\ell+3}^{2\ell+6}\|_{H^{s_{\epsilon}}}.
\end{align*}

\noindent\textbf{Case 2: $f_{\ell+2}^2$ is cubic.} If $f_{\ell+2}^1$ is cubic, then
\begin{align*}
f_{\ell+2}^1&=U_{\ell+3}f_{\ell+3}^1,\\
f_{\ell+2}^2&=(U_{\ell+3}f_{\ell+3}^2) (U_{\ell+3}f_{\ell+3}^3) (U_{\ell+3}f_{\ell+3}^4),\\
f_{\ell+2}^3&=U_{\ell+3}f_{\ell+3}^5,...,\ f_{\ell+2}^{2\ell+4}=U_{\ell+3}f_{\ell+3}^{2\ell+6}.
\end{align*}
Since $f_{\ell+2}^1$ is distinguished, there exists $\ell'\ge 1$ such that
\begin{align*}
f^1_{\ell+3}=U_{\ell+4}f^1_{\ell+4}, f^1_{\ell+4}=U_{\ell+5}f^1_{\ell+5},\dots,f^1_{\ell+1+\ell'}=U_{\ell+2+\ell'}f^1_{\ell+2+\ell'},
\end{align*}
and
\begin{align}
f^1_{\ell+2+\ell'}=(U_{\ell+3+\ell'}f_{\ell+3+\ell'}^1)(U_{\ell+3+\ell'}f_{\ell+3+\ell'}^2)(U_{\ell+3+\ell'}f_{\ell+3+\ell'}^3)\text{ or }f_{\ell+2+\ell'}^1=|\phi|^2\phi,\label{l' condition}
\end{align}
where $f^1_{\ell+3+\ell'}$ is a distinguished function.  Thus, combining all linear propagators acting on $f^1_{\ell+2+\ell'}$, we write
\begin{align*}
f_{\ell+2}^1=U_{\ell+2,\ell+3+\ell'}f_{\ell+2+\ell'}^1.
\end{align*}
Then, applying \eqref{L^p bound} and \eqref{H^s bound}, we obtain
\begin{align}
\eqref{lth step}&\le (CT^\epsilon)^{\ell+1}2^{m_j-1}\int_{[0,T)^{m_j-\ell-2}} dt_{\ell+2}\cdots dt_{m_j-1}\|f_{\ell+2+\ell'}^1\|_{W^{-(s_c+\frac{\epsilon}{2}),r_{\epsilon}}}\|f_{\ell+2}^2\|_{H^{s_\epsilon}}\cdots\|f_{\ell+2}^{2\ell+4}\|_{H^{s_\epsilon}}\nonumber\\
&\le (CT^\epsilon)^{\ell+2}2^{m_j-1}\int_{[0,T)^{m_j-\ell-3}} dt_{\ell+3}\cdots dt_{m_j-1}\|f_{\ell+2+\ell'}^1\|_{W^{-(s_c+\frac{\epsilon}{2}),r_{\epsilon}}}\|f_{\ell+3}^2\|_{H^{s_\epsilon}}\cdots\|f_{\ell+3}^{2\ell+6}\|_{H^{s_\epsilon}},\label{next step}
\end{align}
where, in the second inequality, we applied \eqref{H^s bound} to the cubic regular function $f_{\ell+2}^2$.  After $\ell'-1$ applications of \eqref{H^s bound}, we find that
\begin{align}
\eqref{next step}\le (CT^\epsilon)^{\ell+1+\ell'}2^{m_j-1}&\int_{[0,T)^{m_j-\ell-2-\ell'}}dt_{\ell+2+\ell'}\cdots dt_{m_j-1}\\
&\ \ \|f_{\ell+2+\ell'}^1\|_{W^{-(s_c+\frac{\epsilon}{2}),r_{\epsilon}}}\|f_{\ell+2+\ell'}^2\|_{H^{s_\epsilon}}\cdots\|f_{\ell+2+\ell'}^{2\ell+2\ell'+4}\|_{H^{s_\epsilon}}.\label{almost}
\end{align}
If
\begin{align}
f_{\ell+2+\ell'}^1=|\phi|^2\phi,\label{done}
\end{align}
then it follows from the binary tree graph structure presented in section \ref{sec: distinguished tree graph} that $\ell+2+\ell'=m_j$ and $f_{\ell+2+\ell'}^{\ell''}=\phi$ for $\ell''\ge 2$, and so we have completed the proof of \eqref{done on time}.  Otherwise, by \eqref{l' condition},
\begin{align*}
\eqref{almost}=(CT^\epsilon)^{\ell+1+\ell'}2^{m_j-1}&\int_{[0,T)^{m_j-\ell-2-\ell'}} dt_{\ell+2+\ell'}\cdots dt_{m_j-1}\\
&\ \ \|(U_{\ell+3+\ell'}f_{\ell+3+\ell'}^1)(U_{\ell+3+\ell'}f_{\ell+3+\ell'}^2)(U_{\ell+3+\ell'}f_{\ell+3+\ell'}^3)\|_{W^{-(s_c+\frac{\epsilon}{2}),r_{\epsilon}}}\\
&\times\|U_{\ell+3+\ell'}f_{\ell+3+\ell'}^2\|_{H^{s_\epsilon}}\cdots\|U_{\ell+3+\ell'}f_{\ell+3+\ell'}^{2\ell+2\ell'+4}\|_{H^{s_\epsilon}},
\end{align*}
which is of the form \eqref{lth step}.

\noindent\textbf{Case 3: $f_{\ell+2}^4$ is cubic.} This case can be treated like Case 2.  We choose $\ell'\ge 1$ satisfying \eqref{l' condition}, and combine linear propagators acting on $f_{\ell+2+\ell'}^1$.  Then, we repeat the above procedure to bound \eqref{lth step} by \eqref{next step}.
\end{proof}
\end{lemma}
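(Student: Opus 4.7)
The plan is to iterate the reduction produced by Lemma \ref{first induction step}. That lemma reduces the left-hand side either directly to the claimed bound \eqref{done on time} or to an expression of the form \eqref{dist}, which is structurally identical to the starting integral but with $\ell$ of the $m_j-1$ time integrations absorbed into a factor $(CT^\epsilon)^\ell$ and the distinguished function pushed $\ell$ levels deeper into the distinguished tree. I will induct on $\ell$, showing that each step either increments $\ell$ by at least one or terminates at the leaf $|\phi|^2\phi$; by the finite depth of the distinguished tree, the induction must terminate within $m_j-1$ total steps, producing exactly the claimed bound.

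The inductive step splits on the current form of the distinguished function $f_{\ell+2}^1$, which by the recursive structure in Section \ref{key properties} is either cubic, $(U_{\ell+3}f_{\ell+3}^1)(U_{\ell+3}f_{\ell+3}^2)(U_{\ell+3}f_{\ell+3}^3)$, or linear, $U_{\ell+3}f_{\ell+3}^1$. In the cubic case, a direct application of the trilinear estimate \eqref{L^p bound} in the variable $t_{\ell+2}$ simultaneously strips the propagators off all three children, yields one factor of $CT^\epsilon$, and produces an expression of the same form as \eqref{dist} but with $\ell+1$ in place of $\ell$ (one of $f_{\ell+3}^1,f_{\ell+3}^2,f_{\ell+3}^3$ now playing the role of the distinguished function). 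In the linear case, I cannot peel off the propagator directly, since $e^{it\Delta}$ is not an isometry in $W^{-(s_c+\epsilon/2),r_\epsilon}$; instead I follow the chain of linear distinguished functions $f_{\ell+2}^1=U_{\ell+3}f_{\ell+3}^1$, $f_{\ell+3}^1=U_{\ell+4}f_{\ell+4}^1$, $\dots$, $f_{\ell+1+\ell'}^1=U_{\ell+2+\ell'}f_{\ell+2+\ell'}^1$ until $f_{\ell+2+\ell'}^1$ is either cubic or equal to $|\phi|^2\phi$, using the group property to compose all intermediate propagators into a single one that stays attached to the distinguished slot. The $\ell'$ sibling factors at the traversed linear levels are, by construction, regular and cubic, so $\ell'$ applications of \eqref{H^s bound} in $t_{\ell+2},\dots,t_{\ell+1+\ell'}$ absorb them and produce the factor $(CT^\epsilon)^{\ell'}$.

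After these reductions the distinguished function is of one of two forms. If $f_{\ell+2+\ell'}^1$ is cubic, we are reduced to the cubic subcase above with $\ell$ advanced to $\ell+\ell'$, and the induction continues. If instead $f_{\ell+2+\ell'}^1=|\phi|^2\phi$, the binary tree structure of Section \ref{sec: distinguished tree graph} forces $\ell+2+\ell'=m_j$ and all remaining sibling factors to be bare $\phi$'s; combining the $2m_j-1$ accumulated $\|\phi\|_{H^{s_\epsilon}}$ factors with the single $\||\phi|^2\phi\|_{W^{-(s_c+\epsilon/2),r_\epsilon}}$ and the total $(CT^\epsilon)^{m_j-1}$ then yields \eqref{done on time}. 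The main obstacle throughout is precisely the non-isometry of the free propagator in $W^{-(s_c+\epsilon/2),r_\epsilon}$: a propagator occupying the distinguished slot can be shed only via the cubic trilinear estimate \eqref{L^p bound}, so any linear chain of distinguished functions must be aggregated and carried forward until a cubic vertex or terminating $|\phi|^2\phi$ leaf is reached, with careful bookkeeping of which time variable is being integrated and which sibling factors have been absorbed at each stage in order to produce exactly $m_j-1$ factors of $CT^\epsilon$.
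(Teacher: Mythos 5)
Your proposal is correct and follows essentially the same route as the paper's proof: induction on $\ell$ starting from the output of Lemma \ref{first induction step}, a case split on whether the distinguished function is cubic or linear, aggregation of the free propagators along linear distinguished chains (precisely because $e^{it\Delta}$ is not an isometry on $W^{-(s_c+\epsilon/2),r_\epsilon}$), absorption of the regular cubic siblings via \eqref{H^s bound}, and termination at $f^1_{\ell+2+\ell'}=|\phi|^2\phi$ forced by the tree structure. The only cosmetic difference is that you merge the paper's Cases 2 and 3 into a single ``linear distinguished function'' case, which the paper itself treats as identical.
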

Next, we consider the contribution of the regular factors $J_j^1$.
\begin{prop}\label{H^s integral}
Let $d\ge 3$.  Then, for the regular tree $\tau_j$, we have the bound
\begin{align}
&\int_{[0,T)^{m_j}}dt_1\dots dt_{m_j}{\rm Tr}\bigg(\,\bigg|S^{(1,-d)}J^1_j(t,t_1,\cdots,t_{m_j};\sigma_j)\bigg|\,\bigg)\nonumber\\
&\hspace{3cm}\le 2^{m_j}C^{m_j}T^{\epsilon m_j}\|\phi\|_{H^{s_\epsilon}}^{2m_j+2}.\label{regular bound d ge 2}
\end{align}
Similarly, when $d=2$, we have the bound
\begin{align}
&\int_{[0,T)^{m_j}}dt_1\dots dt_{m_j}{\rm Tr}\bigg(\,\bigg|S^{(1,-d)}J^1_j(t,t_1,\cdots,t_{m_j};\sigma_j)\bigg|\,\bigg)\nonumber\\
&\hspace{3cm}\le 2^{m_j}C^{m_j}T^{\tfrac{1}{3} m_j}\|\phi\|_{H^{1/3}}^{2m_j+2},
\end{align}
and, when $d=1$, we have the bound
\begin{align}
&\int_{[0,T)^{m_j}}dt_1\dots dt_{m_j}{\rm Tr}\bigg(\,\bigg|S^{(1,-d)}J^1_j(t,t_1,\cdots,t_{m_j};\sigma_j)\bigg|\,\bigg)\nonumber\\
&\hspace{3cm}\le 2^{m_j}C^{m_j}T^{\tfrac{1}{2}m_j}\|\phi\|_{L^2}^{2m_j+2}.\label{regular bound d=1}
\end{align}
\begin{proof}
Again, we consider the case $d\ge 3$, and note that the proof for $d=1,2$ is analogous (based on using the bounds for $d=1,2$ in Lemma \ref{lemma: trilinear}).

We now proceed with the proof for $d\ge 3$.
\begin{align}
&\int_{[0,T)^{m_j}}dt_1\dots dt_{m_j}{\rm Tr}\bigg(\,\bigg|S^{(1,-d)}J^1_j(t,t_1,\cdots,t_{m_j};\sigma_j)\bigg|\,\bigg)\nonumber\\
&=\int_{[0,T)^{m_j}}dt_1\dots dt_{m_j}{\rm Tr}\bigg(\,\bigg|S^{(1,-d)}U^{(1)}(t-t_1)\Theta_1\bigg|\,\bigg)\nonumber\\
&\le\sum_{\beta_1}\int_{[0,T)^{m_j}}dt_1\cdots dt_{m_j}\|\psi_{\beta_1}^1\|_{H^{-d}}\|\chi_{\beta_1}^1\|_{H^{-d}}\nonumber\\
&\le\sum_{\beta_1}\int_{[0,T)^{m_j}}dt_1\cdots dt_{m_j}\|\psi_{\beta_1}^1\|_{H^{s_\epsilon}}\|\chi_{\beta_1}^1\|_{H^{s_\epsilon}}\label{choice2}
\end{align}
By \eqref{form1} and \eqref{form2}, one of $\psi_{\beta_1}^1,\chi_{\beta_1}^1$ is cubic, and the other is linear.  We define $f_1^1$ to be the cubic function, and $f_1^2$ to be the linear one.  Then, by \eqref{form1} and \eqref{form2}, $f_1^1$ and $f_1^2$ are of the form
\begin{align*}
f_1^1&=(U_2f_2^1)(U_2f_2^2)(U_2f_2^3).\\
f_1^2&=U_2f_2^4.
\end{align*}
By \eqref{H^s bound}, we have
\begin{align}
\eqref{choice2}&=\sum_{\beta_1}\int_{[0,T)^{m_j}}dt_1\cdots dt_{m_j}\|(U_2f_2^1)(U_2f_2^2)(U_2f_2^3)\|_{H^{s_\epsilon}}\|U_2f_2^4\|_{H^{s_\epsilon}}\label{regular form}\\
&\le(CT^\epsilon)\sum_{\beta_1}\int_{[0,T)^{m_j-1}}dt_2\cdots dt_{m_j}\|f_2^1\|_{H^{s_\epsilon}}\|f_2^2\|_{H^{s_\epsilon}}\|f_2^3\|_{H^{s_\epsilon}}\|f_2^4\|_{H^{s_\epsilon}}.\label{regular step 1}
\end{align}
By construction, only one of the factors $f_2^\ell$ is cubic.  Without loss of generality, $f_2^1$ is cubic, and so we have
\begin{align*}
f_2^1&=(U_3f_3^1)(U_3f_3^2)(U_3f_3^3),\\
f_2^\ell&=U_3f_3^{\ell+2}\quad\text{ for }\ell=2,3,4.
\end{align*}
Thus,
\begin{align*}
\eqref{regular step 1}=(CT^\epsilon)\sum_{\beta_1}\int_{[0,T)^{m_j-1}}dt_2\cdots dt_{m_j}\|(U_3f_3^1)(U_3f_3^2)(U_3f_3^3)\|_{H^{s_\epsilon}}\|U_3f_3^4\|_{H^{s_\epsilon}}\|U_3f_3^5\|_{H^{s_\epsilon}}\|U_3f_3^6\|_{H^{s_\epsilon}},
\end{align*}
which is again of the form \eqref{regular form}.  Recall from subsection \ref{key properties} that there are at most $2^{m_j}$ terms in the sum over $\beta_1$.  Repeating this argument $m_j-1$ more times yields the desired result \eqref{regular bound d ge 2}. 
\end{proof}
\end{prop}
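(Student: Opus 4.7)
The plan is to follow the same inductive strategy used for the distinguished factor in Proposition \ref{L^p integral}, but in the considerably simpler setting of a regular tree, where no distinguished leaf (and hence no factor of $|\phi|^2\phi$) ever appears. Since every function in the decomposition is regular, I can apply the $H^{s_\epsilon}$-based trilinear estimate \eqref{H^s bound} uniformly at every internal vertex, and completely avoid carrying the weaker $W^{-(s_c+\epsilon/2),r_\epsilon}$ norm through the recursion that was needed in the distinguished case.

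Concretely, first I would use the factorized representation of $\Theta_1$ recorded in Section \ref{key properties} to write
\begin{equation*}
J_j^1(t,t_1,\dots,t_{m_j};\sigma_j;x,x')
= U^{(1)}(t-t_1)\sum_{\beta_1} c_{\beta_1}^1\,\chi_{\beta_1}^1(x)\,\overline{\psi_{\beta_1}^1}(x'),
\end{equation*}
with at most $2^{m_j}$ nonzero terms. Bounding the trace of each rank-one operator by the product of the $H^{-d}$ norms of its two factors and then using the embedding $H^{s_\epsilon}\hookrightarrow H^{-d}$ (trivial since $s_\epsilon\ge -d$) reduces the problem to estimating
\begin{equation*}
\sum_{\beta_1}\int_{[0,T)^{m_j}} dt_1\cdots dt_{m_j}\,\|\chi_{\beta_1}^1\|_{H^{s_\epsilon}}\|\psi_{\beta_1}^1\|_{H^{s_\epsilon}}.
\end{equation*}

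The next step is to induct on the depth in the tree. By the structural properties in Section \ref{key properties}, for each $\beta_1$ one of $\chi_{\beta_1}^1,\psi_{\beta_1}^1$ has the linear form $U_2 f_2^4$ while the other has the cubic form $(U_2 f_2^1)(U_2 f_2^2)(U_2 f_2^3)$. Applying \eqref{H^s bound} in $t_1$ to the cubic factor, and using that $U_2$ is an isometry on $H^{s_\epsilon}$ for the linear factor, absorbs one power of $CT^\epsilon$ and gives
\begin{equation*}
CT^\epsilon\int_{[0,T)^{m_j-1}} dt_2\cdots dt_{m_j}\,\prod_{\ell=1}^{4}\|f_2^\ell\|_{H^{s_\epsilon}}.
\end{equation*}
Exactly one of these four new factors is cubic — namely the one corresponding to the next internal vertex along its branch of the tree — while the remaining three are linear, so the same estimate applies again with respect to the next relevant time variable. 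Iterating this step once for each of the $m_j$ internal vertices exhausts the tree, leaves only the $2m_j+2$ leaf factors $\phi$, and produces the bound $(CT^\epsilon)^{m_j}\|\phi\|_{H^{s_\epsilon}}^{2m_j+2}$; combining with the $2^{m_j}$ bound on the sum over $\beta_1$ then yields the claim.

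The only point that actually requires care, and the closest thing to an obstacle, is the bookkeeping step of verifying that the ``exactly one cubic factor among four'' pattern is preserved at each recursion level; but this is a direct consequence of the structural recursion in Section \ref{key properties}, so the induction closes cleanly and the regular case is strictly easier than Proposition \ref{L^p integral}. The cases $d=1,2$ are handled identically, using \eqref{L^2 bound} and \eqref{2d H13 bound} in place of \eqref{H^s bound}, with the resulting time exponent becoming $T^{m_j/2}$ for $d=1$ and $T^{m_j/3}$ for $d=2$.
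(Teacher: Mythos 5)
Your proposal is correct and follows essentially the same argument as the paper: factorize $\Theta_1$ into at most $2^{m_j}$ rank-one terms, pass from $H^{-d}$ to $H^{s_\epsilon}$ norms, and apply the trilinear estimate \eqref{H^s bound} recursively using that exactly one factor is cubic at each level. The treatment of $d=1,2$ via the corresponding bounds in Lemma \ref{lemma: trilinear} also matches the paper.
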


Before we proceed with the proof of Lemma \ref{key estimate}, we present a short lemma that we use to bound the term $|\phi|^2\phi$ appearing on the right hand side of \eqref{distinguished bound d ge 2}.

\begin{lemma}\label{sobx2}
Let $\epsilon>0$.  Then, for $s_c=\frac{d}{2}-1$, $r_\epsilon=\frac{2d}{d+2(1-\epsilon)}$, and $d\ge 3$, we have
\begin{align}
\||\phi|^2\phi\|_{W^{-(s_c+\frac{\epsilon}{2}),r_\epsilon}}
\lesssim\|\phi\|^3_{H^{s_\epsilon}}.\label{cubic 3}
\end{align}
Similarly, when $d=2$, we have
\begin{align}
\||\phi|^2\phi\|_{W^{-(\frac{1}{3}-\frac{\epsilon}{2}),r_\epsilon}}
\lesssim\|\phi\|^3_{H^{1/3}}.\label{cubic 2}
\end{align}
\end{lemma}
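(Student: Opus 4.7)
The plan is to reduce the negative-order Sobolev norm to a concrete $L^p$ bound via duality, and then apply the Sobolev embedding twice: once for the dual space and once for $H^{s_\epsilon}$ (resp.\ $H^{1/3}$). The key identity is that the Bessel potential space $W^{-\sigma,r}$ is dual to $W^{\sigma,r'}$, so by H\"older's inequality one has
\[
\|f\|_{W^{-\sigma,r}} \;\lesssim\; \|f\|_{L^p}
\]
whenever the Sobolev embedding $W^{\sigma,r'}\hookrightarrow L^{p'}$ holds, where $p'$ is the H\"older conjugate of $p$. Applied to $f=|\phi|^2\phi$ this converts the lemma into a chain of two routine Sobolev embeddings, since $\||\phi|^2\phi\|_{L^p} = \|\phi\|_{L^{3p}}^3$.

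For $d\ge 3$, I would first compute $r_\epsilon' = \tfrac{2d}{d-2+2\epsilon}$ and then solve $\tfrac{1}{p'} = \tfrac{1}{r_\epsilon'} - \tfrac{s_c+\epsilon/2}{d}$, which yields $\tfrac{1}{p'}=\tfrac{\epsilon}{2d}$ and hence $p=\tfrac{2d}{2d-\epsilon}$, giving $3p=\tfrac{6d}{2d-\epsilon}$. Note that $s_c+\epsilon/2$ is strictly less than $d/r_\epsilon'$, so we are in the standard sub-critical Sobolev range. It then remains to check that $H^{s_\epsilon}\hookrightarrow L^{3p}$: the sharp Sobolev embedding gives $H^{s_\epsilon}\hookrightarrow L^{d/(1-\epsilon)}$, and by interpolation with $L^2$ we can reach any intermediate $L^q$. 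The required inequality $3p \le \tfrac{d}{1-\epsilon}$ simplifies after cross-multiplication to $6-5\epsilon \le 2d$, which holds precisely for $d\ge 3$—this is exactly the origin of the dimensional restriction in \eqref{cubic 3}. Combining the two embeddings proves the bound.

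For $d=2$, the analogous exponents are $\sigma = 1/3-\epsilon/2$ and $r_\epsilon' = 2/\epsilon$. Here $\sigma\cdot r_\epsilon' = \tfrac{2}{3\epsilon}-1$ exceeds $d=2$ for small $\epsilon$, so we are in the super-critical Sobolev regime and obtain $W^{\sigma,r_\epsilon'}\hookrightarrow L^\infty$; by duality, $L^1\hookrightarrow W^{-\sigma,r_\epsilon}$. Then
\[
\||\phi|^2\phi\|_{W^{-\sigma,r_\epsilon}} \;\lesssim\; \||\phi|^2\phi\|_{L^1} \;=\; \|\phi\|_{L^3}^3 \;\lesssim\; \|\phi\|_{H^{1/3}}^3,
\]
where the last step uses the sharp two-dimensional Sobolev embedding $H^{1/3}(\mathbb{R}^2)\hookrightarrow L^3(\mathbb{R}^2)$, since $\tfrac{1}{3}=\tfrac{1}{2}-\tfrac{1/3}{2}$.

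I expect no serious obstacle here: the proof is a careful bookkeeping of exponents, and the only mildly delicate point is verifying that the chosen exponents fall in the ranges where the Sobolev embedding for Bessel potential spaces applies in its usual sub- or super-critical form. The dimensional threshold $d\ge 3$ in \eqref{cubic 3} arises precisely from the requirement that $3p = \tfrac{6d}{2d-\epsilon}$ not exceed the sharp Sobolev exponent $\tfrac{d}{1-\epsilon}$ of $H^{s_\epsilon}$, and the switch to the $L^\infty$/$L^1$ pairing in $d=2$ is forced by the fact that there $W^{\sigma,r_\epsilon'}$ is super-critical for small $\epsilon$.
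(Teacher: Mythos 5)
Your proposal is correct and follows essentially the same route as the paper: a dual Sobolev embedding to pass from $W^{-(s_c+\frac{\epsilon}{2}),r_\epsilon}$ to $L^{\frac{2d}{2d-\epsilon}}$, the identity $\||\phi|^2\phi\|_{L^p}=\|\phi\|_{L^{3p}}^3$, and a second Sobolev embedding into $H^{s_\epsilon}$, with exactly the exponents $\frac{2d}{2d-\epsilon}$ and $\frac{6d}{2d-\epsilon}$ appearing in the paper's one-line computation. Your $d=2$ treatment via the $L^\infty$/$L^1$ pairing and $H^{1/3}(\mathbb{R}^2)\hookrightarrow L^3(\mathbb{R}^2)$ is the natural instantiation of the ``similar'' argument the paper leaves implicit.
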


\begin{proof}
Let $d\ge 3$.  By two applications of the Sobolev inequality, we have
\begin{equation*}
\||\phi|^2\phi\|_{W^{-(s_c+\frac{\epsilon}{2}),r_\epsilon}} \lesssim \||\phi|^2\phi\|_{L^{\frac{2d}{2d-\epsilon}}}=\|\phi\|_{L^{\frac{6d}{2d-\epsilon}}}^3 \lesssim \|\phi\|^3_{H^{\frac{d+\epsilon}{6}}}\le\|\phi\|^3_{H^{s_\epsilon}}.
\end{equation*}
This establishes \eqref{cubic 3}. The proof for the case $d=2$ is similar.

\end{proof}

We are now ready to conclude the proof of Theorem \ref{thm: main theorem} by proving Lemma \ref{key estimate}.

\begin{proof}[Proof of Lemma \ref{key estimate}]
Recall from \eqref{J decomposition} that $J^k$ can be decomposed into a product of $k$ one particle kernels
\begin{align*}
J^k(t,t_1,\dots,t_n;\sigma)=\prod_{j=1}^k J^1_j(t,t_{\ell_{j,1}},\dots,t_{\ell_{j,m_j}};\sigma_j),
\end{align*}
where only one of the factors $J_j^1$ distinguished.  It now follows from Propositions \ref{L^p integral} and \ref{H^s integral} that
\begin{align*}
&\int_{[0,T)^{n-1}}dt_1\cdots dt_{n-1}{\rm Tr}\bigg(\bigg|S^{(k,-d)}J^k(t,t_1,\dots,t_n;\sigma)\bigg|\bigg)\\
&=\int_{[0,T)^{n-1}}dt_1\cdots dt_{n-1}\prod_{j=1}^k{\rm Tr}\bigg(\bigg|S^{(1,-d)}J^1_j(t,t_{\ell_{j,1}},\dots,t_{\ell_{j,m_j}};\sigma_j)\bigg|\bigg)\\
&\le
\begin{cases}
2^nC^{n-1}T^{\epsilon(n-1)}\|\phi\|_{H^{s_\epsilon}}^{2(k+n)-3}\||\phi|^2\phi\|_{W^{-(s_c+\tfrac{\epsilon}{2}),r_\epsilon}}&\text{ if }d\ge 3\\
2^nC^{n-1}T^{\tfrac{1}{3}(n-1)}\|\phi\|_{H^{1/3}}^{2(k+n)-3}\||\phi|^2\phi\|_{W^{-(\tfrac{1}{3}-\tfrac{\epsilon}{2}),r_\epsilon}}&\text{ if }d=2\\
2^nC^{n-1}T^{\tfrac{1}{2}(n-1)}\|\phi\|_{L^2}^{2(k+n)-3}\||\phi|^2\phi\|_{L^1}&\text{ if }d=1.
\end{cases}
\end{align*}
Thus, for $t\in[0,T)$, it follows from Lemma \ref{sobx2} that
\begin{align*}
&\int_{[0,T)^{n-1}} d\underline{t}_{n-1} \textup{Tr}(|S^{(k,-d)}J^k(\underline{t}_n;\sigma)|)\\
&\le
\begin{cases}
(CT^\epsilon)^{n-1} \|\phi\|_{H^{s_\epsilon}}^{2(k+n)}&\text{ if }d\ge 3\\
(CT^{1/3})^{n-1} \|\phi\|_{H^{1/3}}^{2(k+n)}&\text{ if }d=2\\
(CT^{1/2})^{n-1}\|\phi\|_{H^{1/6}}^{2(k+n)}&\text{ if }d=1,
\end{cases}
\end{align*}
which is precisely the statement of Lemma \ref{key estimate}.
\end{proof}

\appendix

\section{Proof of Lemma \ref{lemma: trilinear}}  \label{sec: proof trilinear}

We prove Lemma \ref{lemma: trilinear} combining the dispersive estimate, the Strichartz estimates (see \cite{KeelTao} for example) and negative order Sobolev norms.

\begin{lemma}[Dispersive estimates] \label{lemma: dispersive estimates}
For $2\leq r\leq\infty$, we have
\begin{equation} \label{lp dispersive estimates}
 \|e^{it\Delta}f\|_{L_{x}^r} \lesssim |t|^{-d(\frac{1}{2}-\frac{1}{r})} \|f\|_{L_x^{r'}}.
\end{equation}
\end{lemma}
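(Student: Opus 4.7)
The plan is to establish the bound by Riesz--Thorin interpolation between two endpoint estimates: the trivial $L^2 \to L^2$ bound coming from unitarity on the Fourier side, and the $L^1 \to L^\infty$ bound coming from the explicit kernel representation of the Schr\"odinger propagator. This is entirely standard, and there is no serious obstacle; the argument needs only care in bookkeeping the interpolation exponents.

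First I would recall the explicit formula for the free Schr\"odinger evolution: for $t \neq 0$,
\begin{equation*}
(e^{it\Delta}f)(x) = \frac{1}{(4\pi i t)^{d/2}} \int_{\mathbb{R}^d} e^{i|x-y|^2/(4t)} f(y) \, dy,
\end{equation*}
which may be verified by computing the Fourier transform of the Gaussian $e^{-it|\xi|^2}$ (analytically continued) and recognizing $e^{it\Delta}f = \mathcal{F}^{-1}(e^{-it|\xi|^2}\widehat{f})$ as a convolution with this kernel. Taking absolute values under the integral yields the $L^1 \to L^\infty$ endpoint
\begin{equation*}
\|e^{it\Delta}f\|_{L^\infty_x} \leq (4\pi|t|)^{-d/2} \|f\|_{L^1_x},
\end{equation*}
which is exactly \eqref{lp dispersive estimates} at $r = \infty$, $r' = 1$. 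Next, since $e^{it\Delta}$ is a Fourier multiplier with symbol $e^{-it|\xi|^2}$ of modulus one, Plancherel's theorem gives
\begin{equation*}
\|e^{it\Delta}f\|_{L^2_x} = \|f\|_{L^2_x},
\end{equation*}
which is the $r = r' = 2$ case with constant $|t|^{-d(1/2-1/2)} = 1$.

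Finally I would invoke the Riesz--Thorin interpolation theorem, applied to the (fixed $t$) linear operator $T := e^{it\Delta}$. Choosing $\theta \in [0,1]$ so that $\frac{1}{r} = \frac{\theta}{2}$, equivalently $\frac{1}{r'} = 1 - \frac{\theta}{2}$, and interpolating between $T: L^1 \to L^\infty$ (with operator norm $\lesssim |t|^{-d/2}$) and $T: L^2 \to L^2$ (with operator norm $1$) produces
\begin{equation*}
\|Tf\|_{L^r_x} \lesssim |t|^{-(1-\theta)d/2}\|f\|_{L^{r'}_x} = |t|^{-d(\tfrac{1}{2}-\tfrac{1}{r})}\|f\|_{L^{r'}_x},
\end{equation*}
valid for all $2 \leq r \leq \infty$, which is precisely \eqref{lp dispersive estimates}.
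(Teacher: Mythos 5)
Your proof is correct and is the standard argument: the explicit Gaussian kernel gives the $L^1\to L^\infty$ bound with constant $(4\pi|t|)^{-d/2}$, Plancherel gives the $L^2\to L^2$ isometry, and Riesz--Thorin with $\frac{1}{r}=\frac{\theta}{2}$ yields the full range with the correct exponent $d(\frac12-\frac1r)$. The paper states this lemma as a classical fact without proof, so there is nothing to compare against; your write-up supplies exactly the expected textbook derivation.
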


\begin{lemma}[Homogeneous Strichartz estimates] \label{lemma: strichartz estimates}
We call a pair of exponents $(q,r)$ Schr\"odinger admissible if $2\leq q,r\leq \infty$, $\frac{2}{q}+\frac{d}{r}=\frac{d}{2}$ and $(q,r,d)\neq (2,\infty,2)$. Then for any admissible exponents $(q,r)$ we have the homogeneous Strichartz estimate
\begin{equation}  \label{homo Strichartz estimate}
  \|e^{it\Delta}f\|_{L_t^q L_{x}^{r}} \lesssim \|f\|_{L_x^2}.
\end{equation}
\end{lemma}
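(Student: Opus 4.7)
The plan is to prove the Strichartz estimate by the classical $TT^*$ argument combined with Lemma \ref{lemma: dispersive estimates} and Hardy--Littlewood--Sobolev (HLS) fractional integration in time. Define $T: L^2_x(\mathbb{R}^d)\to L^q_t L^r_x$ by $Tf(t,x) = (e^{it\Delta}f)(x)$. By duality, the desired bound $\|Tf\|_{L^q_t L^r_x}\lesssim \|f\|_{L^2_x}$ is equivalent to $\|T^*F\|_{L^2_x}\lesssim \|F\|_{L^{q'}_t L^{r'}_x}$, which in turn is equivalent to the boundedness of
\[
TT^*F(t,\cdot) = \int_{\mathbb{R}} e^{i(t-s)\Delta}F(s,\cdot)\,ds
\]
from $L^{q'}_t L^{r'}_x$ to $L^q_t L^r_x$. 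This reduction is standard and I would record it briefly up front.

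Next, I would combine the dispersive estimate \eqref{lp dispersive estimates} with Minkowski's inequality to obtain the pointwise-in-time bound
\[
\|TT^*F(t,\cdot)\|_{L^r_x} \;\lesssim\; \int_{\mathbb{R}} |t-s|^{-d\left(\frac{1}{2}-\frac{1}{r}\right)} \|F(s,\cdot)\|_{L^{r'}_x}\,ds.
\]
The admissibility condition $\tfrac{2}{q} + \tfrac{d}{r} = \tfrac{d}{2}$ gives $d\bigl(\tfrac{1}{2}-\tfrac{1}{r}\bigr) = \tfrac{2}{q}$, so the kernel is $|t-s|^{-2/q}$.

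For the non-endpoint case $q > 2$, the exponent $\alpha := 2/q$ lies in $(0,1)$ and the relation $\tfrac{1}{q'} - \tfrac{1}{q} = 1 - \tfrac{2}{q} = 1 - \alpha$ is exactly the scaling condition required by HLS for the fractional integration operator $g\mapsto \int |t-s|^{-\alpha}g(s)\,ds$ to map $L^{q'}_t \to L^q_t$. Applying HLS in $t$ to $g(s) = \|F(s,\cdot)\|_{L^{r'}_x}$ yields
\[
\|TT^*F\|_{L^q_t L^r_x} \;\lesssim\; \bigl\| \|F(s,\cdot)\|_{L^{r'}_x}\bigr\|_{L^{q'}_t} \;=\; \|F\|_{L^{q'}_t L^{r'}_x},
\]
which closes the $TT^*$ argument. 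The trivial case $q=\infty$, $r=2$ follows from the unitarity of $e^{it\Delta}$ on $L^2_x$.

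The main obstacle is the endpoint $q=2$, which corresponds to $r = \tfrac{2d}{d-2}$ and requires $d\geq 3$; in this case $\alpha = 1$ and HLS fails. Here I would invoke the Keel--Tao endpoint argument: perform a dyadic decomposition $\{|t-s|\sim 2^j\}_{j\in\mathbb{Z}}$ of the time integral, estimate each dyadic piece as a bilinear form using complex interpolation between the trivial $L^2_x\to L^2_x$ bound (with no time decay) and the dispersive $L^1_x\to L^\infty_x$ bound (with decay $2^{-jd/2}$), and then sum the dyadic pieces via the Christ--Kiselev/atomic lemma to recover the $L^2_t$ bound. Since this argument is lengthy and already appears in the Keel--Tao reference cited in the paper, I would quote it directly rather than reproduce it, noting that admissibility excludes the forbidden pair $(2,\infty,2)$ in dimension $d=2$, consistent with the failure of the endpoint there.
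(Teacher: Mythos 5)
Your proposal is correct, but note that the paper itself offers no proof of this lemma at all: it is quoted as a classical result with a pointer to the Keel--Tao reference \cite{KeelTao}, so there is no internal argument to compare against. Your sketch is the standard route and the details check out: admissibility $\tfrac{2}{q}+\tfrac{d}{r}=\tfrac{d}{2}$ does give the kernel exponent $d(\tfrac12-\tfrac1r)=\tfrac2q$, and the Hardy--Littlewood--Sobolev scaling $\tfrac1{q'}-\tfrac1q=1-\tfrac2q$ is exactly what is needed for $q>2$, with $(q,r)=(\infty,2)$ handled by unitarity. For the endpoint $q=2$ you rightly defer to Keel--Tao; the only small inaccuracy is your description of their mechanism --- the endpoint proof proceeds by a bilinear dyadic decomposition in $|t-s|$ and an interpolation/summation argument for the resulting bilinear forms, not by the Christ--Kiselev lemma (which enters only for retarded, non-sharp inhomogeneous estimates) --- but since you explicitly quote rather than reproduce that argument, this does not affect correctness. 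It is also worth observing that every admissible pair actually used in Appendix \ref{sec: proof trilinear} of the paper (e.g.\ $q=3$, $q=\tfrac{6}{2-3\epsilon}$, and $L_{t,x}^6$ in $d=1$) is non-endpoint, so the self-contained $TT^*$ plus HLS part of your argument already suffices for the paper's applications; the gain of invoking Keel--Tao is only that the lemma holds in the full stated generality including $q=2$, $d\ge3$.
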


\begin{lemma}[Negative order Sobolev norms]\label{lemma: negLeibniz} 
Let $\epsilon>0$ be a small number. Then, for $s\geq s_c+\frac{\epsilon}{2}$, we have
$$\|fg\|_{W^{-s,r_{\epsilon}}}\lesssim\|f\|_{W^{-s,r'_{\epsilon}}}\|g\|_{W^{s, \frac{2d}{d+2-3\epsilon}}},$$
where $r_{\epsilon}=\frac{2d}{d+2(1-\epsilon)}$.
\end{lemma}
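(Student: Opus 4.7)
My plan is to reduce the negative-order estimate to a positive-order fractional Leibniz bound via duality, and then to close the resulting gap with Sobolev embedding. Using the duality $(W^{s,r'_\epsilon})^{\ast}=W^{-s,r_\epsilon}$ together with the pairing between $W^{-s,r'_\epsilon}$ and $W^{s,r_\epsilon}$, I would write
\begin{equation*}
\|fg\|_{W^{-s,r_\epsilon}}
=\sup_{\|h\|_{W^{s,r'_\epsilon}}\le 1}|\langle f,gh\rangle|
\le \|f\|_{W^{-s,r'_\epsilon}}\,\sup_{\|h\|_{W^{s,r'_\epsilon}}\le 1}\|gh\|_{W^{s,r_\epsilon}},
\end{equation*}
so the lemma reduces to establishing the positive-order bilinear estimate
\begin{equation*}
\|gh\|_{W^{s,r_\epsilon}}\lesssim \|g\|_{W^{s,p}}\|h\|_{W^{s,r'_\epsilon}}, \qquad p:=\tfrac{2d}{d+2-3\epsilon}.
\end{equation*}

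For this positive-order bound I would invoke the Kato--Ponce fractional Leibniz inequality in the form
\begin{equation*}
\|gh\|_{W^{s,r_\epsilon}}\lesssim \|g\|_{W^{s,p}}\|h\|_{L^{b_1}}+\|g\|_{L^{a_2}}\|h\|_{W^{s,r'_\epsilon}},
\end{equation*}
where the two H\"older constraints $1/r_\epsilon=1/p+1/b_1=1/a_2+1/r'_\epsilon$ force $b_1=2d/\epsilon$ and $a_2=d/[2(1-\epsilon)]$. It then remains to upgrade the pure Lebesgue norms to Sobolev norms via the two embeddings $W^{s,r'_\epsilon}\hookrightarrow L^{b_1}$ and $W^{s,p}\hookrightarrow L^{a_2}$. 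A direct computation of the Sobolev indices in each case shows that the sharp threshold for both embeddings is precisely
\begin{equation*}
s\ge \tfrac{d-2+\epsilon}{2}=s_c+\tfrac{\epsilon}{2},
\end{equation*}
which matches the hypothesis exactly; in particular, the symmetry of this threshold across the two Leibniz branches is what dictated the choice $p=2d/(d+2-3\epsilon)$ in the statement.

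The argument has no genuinely substantive technical obstacle. The real content of the lemma is the exponent bookkeeping: verifying that the H\"older splittings in Kato--Ponce, combined with the sharp Sobolev embeddings into $L^{b_1}$ and $L^{a_2}$, simultaneously saturate at the single regularity $s_c+\epsilon/2$. One should also briefly confirm that $r_\epsilon,r'_\epsilon,p,a_2,b_1\in(1,\infty)$, so that Kato--Ponce applies in its standard form; this is automatic for $d\ge 3$ and $\epsilon$ sufficiently small, which is the regime in which the lemma is used in the proof of Proposition \ref{L^p integral}.
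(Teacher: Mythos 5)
Your argument is correct and is essentially identical to the paper's proof: the paper also reduces to the bilinear bound $\|g\bar h\|_{W^{s,r_\epsilon}}$ via the pairing $|\int f g\bar h|\le\|f\|_{W^{-s,r'_\epsilon}}\|g\bar h\|_{W^{s,r_\epsilon}}$ and duality, applies the fractional Leibniz rule with the same H\"older splitting ($L^{2d/\epsilon}$ and $L^{d/(2(1-\epsilon))}$), and closes with the same two Sobolev embeddings, both of which saturate at $s=s_c+\epsilon/2$. No discrepancies to report.
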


\begin{proof}
By H\"older's inequality, the fractional Leibniz rule and the Sobolev inequality, we have
\begin{align*}
\Big|\int f(x)g(x)\overline{h(x)} dx\Big|&\leq \|f\|_{W^{-s, r'_{\epsilon}}}\|g\bar{h}\|_{W^{s, r_{\epsilon}}}\\
&\lesssim \|f\|_{W^{-s, r'_{\epsilon}}}\Big(\|g\|_{W^{s, \frac{2d}{d+2-3\epsilon}}}\|h\|_{L^{\frac{2d}{\epsilon}}}+\|g\|_{L^{\frac{d}{2(1-\epsilon)}}}\|h\|_{W^{s, r'_{\epsilon}}}\Big)\\
&\lesssim\|f\|_{W^{-s, r'_{\epsilon}}}\|g\|_{W^{s, \frac{2d}{d+2-3\epsilon}}}\|h\|_{W^{s, r'_{\epsilon}}}.
\end{align*}
The lemma now follows from the standard duality argument.
\end{proof}

\begin{proof}[Proof of Lemma \ref{lemma: trilinear}]
$(i)$. For notational convenience, we omit the time interval $[0,T)$ in the norms. \\
$(\ref{L^p bound})$: By Lemma \ref{lemma: negLeibniz}, we get
\begin{equation}   \label{dispersive flip}
\begin{aligned}
\|T(f,g,h)\|_{W^{-(s_c+\frac{\epsilon}{2}),r_{\epsilon}}}&\lesssim\|e^{i(t-t_1)\Delta}f\|_{W^{-(s_c+\frac{\epsilon}{2}),r'_{\epsilon}}}\|(e^{i(t-t_2)\Delta}g)(e^{i(t-t_3)\Delta}h)\|_{W^{s_c+\frac{\epsilon}{2}, \frac{2d}{d+2-3\epsilon}}}\\
&\lesssim\tfrac{1}{|t-t_1|^{1-\epsilon}}\|f\|_{W^{-(s_c+\frac{\epsilon}{2}),r_{\epsilon}}}\|g\|_{H^{s_{\epsilon}}}\|h\|_{H^{s_{\epsilon}}}.
\end{aligned}
\end{equation}
Here, in the second inequality, we use the dispersive estimate:
$$\|e^{i(t-t_1)\Delta}f\|_{W^{-(s_c+\frac{\epsilon}{2}),r'_{\epsilon}}}\lesssim\tfrac{1}{|t-t_1|^{1-\epsilon}}\|f\|_{W^{-(s_c+\frac{\epsilon}{2}),r_{\epsilon}}}$$
and the fractional Leibniz rule and the Sobolev inequality:
\begin{equation}
\begin{aligned}
&\|(e^{i(t-t_2)\Delta}g)(e^{i(t-t_3)\Delta}h)\|_{W^{s_c+\frac{\epsilon}{2}, \frac{2d}{d+2-3\epsilon}}}\\
&\lesssim\|e^{i(t-t_2)\Delta}g\|_{W^{s_c+\frac{\epsilon}{2}, \frac{2d}{d-\epsilon}}}\|e^{i(t-t_3)\Delta}h\|_{L^{ \frac{d}{1-\epsilon}}}+\|e^{i(t-t_2)\Delta}g\|_{L^{ \frac{d}{1-\epsilon}}}\|e^{i(t-t_3)\Delta}h\|_{W^{s_c+\frac{\epsilon}{2}, \frac{2d}{d-\epsilon}}}\\
&\lesssim\|e^{i(t-t_2)\Delta}g\|_{H^{s_{\epsilon}}}\|e^{i(t-t_3)\Delta}h\|_{H^{s_{\epsilon}}}=\|g\|_{H^{s_{\epsilon}}}\|h\|_{H^{s_{\epsilon}}}.
\end{aligned}
\end{equation}
Integrating out the time variable $t$, we prove \eqref{L^p bound}.\\ \\
$(\ref{H^s bound})$: By the fractional Leibniz rule, we have
\begin{align*}
\|T(f,g,h)\|_{L_t^1H_x^{s_{\epsilon}}}&\lesssim \|e^{i(t-t_1)\Delta}f\|_{L_t^3 W_x^{s_{\epsilon},\frac{6d}{3d-4}}}\|e^{i(t-t_2)\Delta}g\|_{L_t^3 L_x^{3d}}\|e^{i(t-t_3)\Delta}h \|_{L_t^3 L_x^{3d}}\\
&+\|e^{i(t-t_1)\Delta}f\|_{L_t^3 L_x^{3d}}\|e^{i(t-t_2)\Delta}g\|_{L_t^3 W_x^{s_{\epsilon},\frac{6d}{3d-4}}}\|e^{i(t-t_3)\Delta}h \|_{L_t^3 W_x^{3d}}\\
&+\|e^{i(t-t_1)\Delta}f\|_{L_t^3 L_x^{3d}}\|e^{i(t-t_2)\Delta}g\|_{L_t^3 L_x^{3d}}\|e^{i(t-t_3)\Delta}h \|_{L_t^3 W_x^{s_{\epsilon},\frac{6d}{3d-4}}}.
\end{align*}
Then, by the Sobolev inequality and the Strichartz estimates, we bound the first term by
\begin{align*}
&\lesssim\|e^{i(t-t_1)\Delta}f\|_{L_t^3 W_x^{s_{\epsilon},\frac{6d}{3d-4}}}\|e^{i(t-t_2)\Delta}g\|_{L_t^3 W_x^{s_{\epsilon},\frac{6d}{3d-4+6\epsilon}}}\|e^{i(t-t_3)\Delta}h \|_{L_t^3 W_x^{s_{\epsilon},\frac{6d}{3d-4+6\epsilon}}}\\
&\leq T^\epsilon\|e^{i(t-t_1)\Delta}f\|_{L_t^3 W_x^{s_{\epsilon},\frac{6d}{3d-4}}}\|e^{i(t-t_2)\Delta}g\|_{L_t^{\frac{6}{2-3\epsilon}} W_x^{s_{\epsilon},\frac{6d}{3d-4+6\epsilon}}}\|e^{i(t-t_3)\Delta}h \|_{L_t^{\frac{6}{2-3\epsilon}} W_x^{s_{\epsilon},\frac{6d}{3d-4+6\epsilon}}}\\
&\lesssim T^\epsilon \|f\|_{H^{s_{\epsilon}}}\|g\|_{H^{s_{\epsilon}}}\|h\|_{H^{s_{\epsilon}}}.
\end{align*}
Similarly, we bound the other two terms. \\

$(ii)$. 
\eqref{2d bound}: The proof is similar to that of $(\ref{L^p bound})$, but here we use Lemma \ref{lemma: negLeibniz} with $s=(\frac{1}{3}-\frac{\epsilon}{2})$. Indeed, by the dispersive estimate and Lemma \ref{lemma: negLeibniz},
\begin{align*}
\|T(f,g,h)\|_{W^{-(\frac{1}{3}-\frac{\epsilon}{2}),r_{\epsilon}}}&\lesssim\|e^{i(t-t_1)\Delta}f\|_{W^{-(\frac{1}{3}-\frac{\epsilon}{2}),r'_{\epsilon}}}\|(e^{i(t-t_2)\Delta}g)(e^{i(t-t_3)\Delta}h)\|_{W^{\frac{1}{3}-\frac{\epsilon}{2}, \frac{2d}{d+2-3\epsilon}}}\\
&\lesssim\tfrac{1}{|t-t_1|^{1-\epsilon}}\|(e^{i(t-t_2)\Delta}g)(e^{i(t-t_3)\Delta}h)\|_{W^{\frac{1}{3}-\frac{\epsilon}{2}, \frac{2d}{d+2-3\epsilon}}}.
\end{align*}
Then, modifying $(A.1)$, we obtain
\begin{align*}
&\|(e^{i(t-t_2)\Delta}g)(e^{i(t-t_3)\Delta}h)\|_{W^{\frac{1}{3}-\frac{\epsilon}{2}, \frac{2d}{d+2-3\epsilon}}}\\
&\lesssim\|e^{i(t-t_2)\Delta}g\|_{W^{\frac{1}{3}-\frac{\epsilon}{2} \frac{2d}{d-\epsilon}}}\|e^{i(t-t_3)\Delta}h\|_{L^{ \frac{d}{1-\epsilon}}}+\|e^{i(t-t_2)\Delta}g\|_{L^{ \frac{d}{1-\epsilon}}}\|e^{i(t-t_3)\Delta}h\|_{W^{\frac{1}{3}-\frac{\epsilon}{2}, \frac{2d}{d-\epsilon}}}\\
&\lesssim\|e^{i(t-t_2)\Delta}g\|_{H^{1/3}}\|e^{i(t-t_3)\Delta}h\|_{H^{1/3}}=\|g\|_{H^{1/3}}\|h\|_{H^{1/3}},
\end{align*}
Applying this to the above inequality and Integrating out $t$, we compete the proof. \\

\eqref{2d H13 bound}: Although we set $\epsilon$ to be small and $d\geq 3$ in the proof of \eqref{H^s bound}, it actually works for $\epsilon=\frac{1}{3}$ and $d=2$ which is exactly \eqref{2d H13 bound}. \\

$(iii)$.
For \eqref{L^1 bound}, by the H\"older inequality and the 1d dispersive estimates, we get
$$\|T(f,g,h)\|_{L^1}\leq \|e^{i(t-t_1)}f\|_{L^{\infty}} \|e^{i(t-t_2)}g\|_{L^2} \|e^{i(t-t_3)}h\|_{L^2} \lesssim \tfrac{1}{|t-t_1|^{1/2}} \|f\|_{L^1} \|g\|_{L^2} \|h\|_{L^2}.$$
Integrating out the time variable $t$, we prove \eqref{L^1 bound}.\\
For \eqref{L^2 bound}, by the H\"older inequality and the Strichartz estimate,
$$\|T(f,g,h)\|_{L_t^1 L_x^2}\leq T^{1/2}\|e^{i(t-t_1)}f\|_{L_{t,x}^6} \|e^{i(t-t_2)}g\|_{L_{t,x}^6} \|e^{i(t-t_3)}h\|_{L_{t,x}^6}\lesssim T^{1/2}\|f\|_{L^2}\|g\|_{L^2}\|h\|_{L^2}.$$
\end{proof}

\begin{acklg}
 The authors would like to express their special appreciation and thanks to their mentors Thomas Chen and Nata\v{s}a Pavlovi\'c for proposing the problem and for various useful discussions.
\end{acklg}

\bibliographystyle{abbrv}

\end{document}